\pgfplotsset{compat=1.18}
\pgfplotsset{every axis/.append style={
                    label style={font=\small},
                    tick label style={font=\small}  
                    }}
\definecolor{col1}{rgb}{     0, 0.4470, 0.7410}
\definecolor{col2}{rgb}{0.8500, 0.3250, 0.0980}
\definecolor{col3}{rgb}{0.9290, 0.6940, 0.1250}
\definecolor{col4}{rgb}{0.4940, 0.1840, 0.5560}
\renewcommand{\:}{\mathrel{\coloneqq}}
\newcommand{\plotfont}{\fontsize{8}{10}\selectfont} 
\newcommand{\R}{\mathbb{R}}
\renewcommand{\epsilon}{\varepsilon}
\newcommand{\N}{\mathbb N}
\newcommand{\eps}{\epsilon}
\newcommand{\sL}{{\mathsf{L}\!}}
\newcommand{\sBV}{{\mathsf{BV}}}
\newcommand{\sW}{\mathsf W}
\newcommand{\sC}{\mathsf C}
\newcommand{\sTV}{\mathsf{TV}}
\newcommand{\mD}{\mathcal D}
  \newcommand{\sgn}{\ensuremath{\textnormal{sgn}}}
\newcommand{\e}{\mathrm e}
\newcommand{\OT}{{\Omega_{T}}}
\newcommand{\loc}{\textnormal{loc}}
\newcommand{\dd}{\ensuremath{\,\mathrm{d}}}
\DeclareMathOperator{\supp}{supp}
\newtheoremstyle{thmstyleone}
    {18pt plus2pt minus1pt}
    {18pt plus2pt minus1pt}
    {\small\itshape}
    {0pt}
    {\small\bfseries}
    {}
    {.5em}
    {}%
\newtheoremstyle{thmstyletwo}
    {18pt plus2pt minus1pt}
    {18pt plus2pt minus1pt}
    {\small\normalfont}
    {0pt}
    {\small\itshape}
    {}
    {.5em}
    {}
\newtheoremstyle{thmstylethree}
    {18pt plus2pt minus1pt}
    {18pt plus2pt minus1pt}
    {\small\normalfont}
    {0pt}
    {\small\bfseries}
    {}
    {.5em}
    {}
\theoremstyle{thmstyleone}%
\newtheorem{theorem}{Theorem}
\newtheorem{proposition}[theorem]{Proposition}%
\theoremstyle{thmstyletwo}%
\theoremstyle{thmstylethree}%
\newtheorem{remark}{Remark}%
\newtheorem{assumption}{Assumption}
\newtheorem{definition}{Definition}%
\newtheorem{lemma}{Lemma}
\crefname{remark}{Rem.}{Remarks}
\crefname{lemma}{Lem.}{Lemmata}
\crefname{proposition}{Prop.}{Propositions}
\crefname{corollary}{Cor.}{Corallaries}
\crefname{theorem}{Thm.}{Theorems}
\crefname{assumption}{Asm.}{Assumptions}
\crefname{definition}{Defn.}{Definitions}
\crefname{example}{Ex.}{Examples}
\crefname{section}{Sec.}{Sections}
\begin{document}

\title[Singular Limit Problem for nonlocal conservation laws: Kernels with fixed support]{On the singular limit problem for nonlocal conservation laws: A general approximation result for kernels with fixed support}

\author*[1]{\fnm{Alexander} \sur{Keimer}}\email{alexander.keimer@fau.de}

\author*[1,2]{\fnm{Lukas} \sur{Pflug}}\email{lukas.pflug@fau.de}

\affil*[1]{\orgdiv{Department of Mathematics}, \orgname{Friedrich-Alexander-University Erlangen-Nürnberg (FAU)}, \orgaddress{\street{Cauerstraße 11}, \city{Erlangen}, \postcode{91058}, \country{Germany}}}

\affil[2]{\orgdiv{Competence Center Scientific Computing (CSC)}, \orgname{Friedrich-Alexander-University Erlangen-Nürnberg (FAU)}, \orgaddress{\street{Martensstraße 5a}, \city{Erlangen}, \postcode{91058}, \country{Germany}}}

\abstract{We prove the convergence of solutions of nonlocal conservation laws to their local entropic counterpart for a fundamentally extended class of nonlocal kernels when these kernels approach a Dirac distribution.
The nonlocal kernels are assumed to have fixed support and do not have to be monotonic. With sharp estimates of the nonlocal kernels and a surrogate nonlocal quantity, we prove compactness in \(\sC(\sL^{1}_{\text{loc}})\) which allow passing to the limit in the weak formulation. A careful analysis of the entropy condition of local conservation laws together with the named estimators for the considered kernels enable it to prove the entropy admissibility of the nonlocal equation in the limit, completing the convergence proof.  
}

\keywords{Nonlocal conservation laws, Singular limit problem, Nonlocal approximation, Integral kernels, Entropy solutions}

\pacs[2020]{35L65,35L02,35L03,35L60}

\maketitle

\section{Introduction}\label{sec:introduction}
In this contribution, we focus on the convergence of solutions of nonlocal conservation laws (for details see \cite{scialanga,pflug,crippa2013existence,kloeden,chiarello,friedrich2018godunov}) to the corresponding local conservation laws when the nonlocal kernel converges to a Dirac distribution.
This problem has been studied quite intensively over the last decade. \cite{teixeira} observed for the first time that, for the numerical approximation, such a convergence might hold.
Later, for rather general nonlocal conservation laws with an additional monotonicity assumption on the datum a general convergence result was proven \cite{pflug4}. Several other papers showed such convergence under additional assumptions on velocity and/or initial datum \cite{Crippa2021,spinolo,bressan2019traffic,bressan2021entropy}, until a general approximation result relying on the exponential kernel recently became available \cite{coclite2022general}. The general idea was to consider, instead of the solution, the nonlocal approximation and to derive another PDE in purely nonlocal terms, allowing us to obtain uniform \(\sTV\) bounds on the nonlocal term --- enough for passing to the limit in the weak formulation. Combining \cite{bressan2021entropy} together with a classical compactness result then gave the claimed convergence in \cite{colombo2023nonlocal}, and then generalized to a rather broad class of convex kernels (for further details, see the later comparison to the results in this manuscript, particularly \cref{subsec:comparison}) and also to nonlocal conservation laws where the nonlocal operator acts on the velocity rather than on the density \cite{friedrich2023conservation}.

Let us briefly dwell on why this convergence is of such importance:
\begin{itemize}
    \item It ties together local and nonlocal conservation laws so that each local modelling can be understood as a limit of nonlocal modelling. Even more, as most local equations can be interpreted as limits of nonlocal equations (at least on the particle level), we thus have a rigorous proof of such limiting models.
    \item It enables weak (entropy) solutions of local conservation laws to be defined by means of limits or nonlocal conservation laws with the advantage that, for nonlocal conservation laws, the hyperbolicity of the dynamics is conserved.
    \item With the previous points in mind, one can approximate, for instance, local (optimal) control problems and stabilization problems with their nonlocal counterparts where the underlying theory---weak solutions are unique---is easier. Moreover, the non-dif\-fe\-ren\-tiability for entropy solutions of (local) conservation laws (see e.g.\ \cite{hajian2019total,herty2023numerics,Bouchut1999,pfaff2015optimal,ulbrich2002sensitivity}) can be overcome by using nonlocal approximations and passing to the limit afterwards.
    \item The result can be seen as a first step for nonlocal approximation results in the system of conservation law as well as the multi-$d$ case.
\end{itemize}

We will show in this manuscript that this convergence holds for a hitherto unstudied and broad class of nonlocal kernels which we call ``kernels with fixed support.'' These kernels do not even need to obey the classical monotonicity assumption which is particular reasonable in traffic flow modelling: The further one is from the current position, the less the nonlocal contribution should be. For this class of kernels, the nonlocal scaling is not in the kernel support but in the exponent of the kernel and in the exponential kernel being the kernel which fits into both categories. 

The crucial step in establishing the convergence is to introduce a surrogate quantity, the exponential weight of the solution, which allows us to express the dynamics entirely in the nonlocal operator for which we can show compactness in \(\sC(\sL^{1}_{\text{loc}})\), taking advantage of estimators for the kernels considered.

Let us consider the following problem on \((t,x)\in(0,T)\times\R \coloneqq \OT\)
\begin{align}
    q_{t}+\big(V(W[q,\gamma_\eta](t,x))q(t,x)\big)_{x}&=0,&& (t,x)\in\OT \notag\\
    W[q,\gamma_{\eta}](t,x)&=\int_{x}^{\infty} \gamma_\eta(y-x)q(t,y)\dd y, && (t,x)\in\OT\label{eqn:NBL}\\
    q(0,x)&=q_{0}(x),&& x\in\R \notag
\end{align}
with the integral kernel $\gamma_\eta$ defined as
\begin{align}
   \gamma_\eta(x) \: c_\eta \gamma(x)^\frac{1}{\eta} \text{ with } c_\eta \: \bigg(\int_{0}^{\infty} \gamma(y)^\frac{1}{\eta}\dd y\bigg)^{-1},\; \forall (x,\eta) \in \R_{\geq 0}\times \R_{>0}.
    \label{eqn:cgamma}
\end{align}
For kernels $\gamma_\eta$ defined as above, with a \(\sTV\) bounded $\gamma$, with a negative first derivative at zero, bounded second derivative in a neighborhood of zero, i.e. $\gamma|_{(0,\delta)} \in W^{2,\infty}((0,\delta))$ for some $\delta \in \R_{>0}$, and some weak monotonicity principle, viz., $\gamma(x) \geq \gamma(y)$ for all $x\in (0,\delta)$ and all $y\in \R_{>x}$, we will show that the related solutions $q_\eta$ converge strongly in \(\sC(\sL^1)\) to the entropy admissible solution of the corresponding local conservation law.

\subsection{Comparison with the existing convergence results}\label{subsec:comparison}
The previously mentioned convergence results rely strongly on a spatial scaling of the involved kernels, i.e., for a given kernel $\gamma \in \sL^1(\R_{>0};\R_{\geq 0})$, the limit of the following scaled kernels were considered:
\begin{align}
    \gamma_{\eta}(x)\coloneqq \tfrac{1}{\eta}\gamma \big(\tfrac{x}{\eta}\big),\ x\in\R
\end{align}
while we consider a scaling in the power as introduced in \cref{eqn:cgamma}. In \cref{fig:kernel_comparison}, the two types of scaling are illustrated for two different kernels. Roughly speaking, the power scaling ``reduces'' discontinuities, leading to ``smoother'' kernels compared to the spatial scaling where the discontinuities get up-scaled by \(\eta^{-1}\).

The exponential kernels, for which the first general proof of convergence was obtained in \cite{coclite2022general}, can be interpreted as either of  spatial scaling or power scaling, as can be seen in the following calculation:
\begin{align}
    \tfrac{1}{\eta}\exp\big(-\tfrac{x}{\eta}\big) =  \tfrac{1}{\eta}\exp(-x)^\frac{1}{\eta} ,\ x\in\R.
    \label{eqn:gamma_x_y}
\end{align}
To compare the convergence results obtained in the literature (\cite{coclite2022general,colombo2023nonlocal}) with the results in this contribution, we separate the scaling type from the underlying kernel $\gamma : \R_{\geq 0} \rightarrow \R_{\geq 0}$, i.e.\ we choose a \(\gamma\) as kernel and detail for which scaling the convergence is or has been proven.
In the most general result \cite{colombo2023nonlocal}, the authors require the kernel to be (following the notation in this manuscript)
\[
\sL^{1}\cap\sL^{\infty}\big(\R_{\geq0};\R_{\geq0}\big)\ \wedge\ \text{convex and monotonically decreasing.}
\]
However, this implies in particular continuity of the kernel on \(\R_{\geq 0}\) (after selecting a proper representative).
The kernel class considered in this contribution can, however, be discontinuous outside an arbitrarily small neighborhood around zero (where we indeed require higher regularity), see \cref{ass:gamma}, and the kernel can also be non-monotone outside an even smaller environment so that the values of \(\gamma\) inside this smaller neighborhood dominate the remaining values of \(\gamma\). This brings us to the statement that the kernel class considered here with the power scaling is much broader than all existing results on convergence in the literature (but recognizing that the higher regularity in the small neighborhood around zero is not required in \cite{colombo2023nonlocal}).

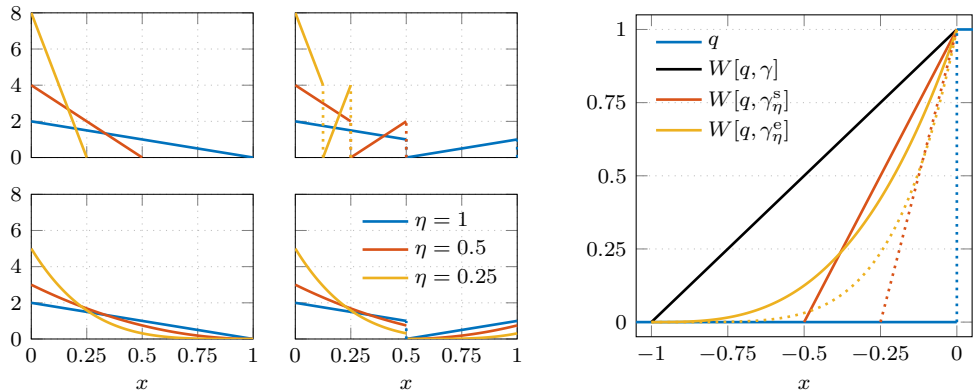
\begin{figure}
\centering
\begin{minipage}[b]{.57\textwidth}
   \begin{tikzpicture}
\begin{axis}[grid=both,
height=3.5cm,width=4.5cm,xscale=1,
            grid style=dotted,
          xmin=0,ymin=0,
          xmax=1,ymax=8,
          restrict y to domain=0:10,
          restrict x to domain=0:10,
          xtick={0,0.25,0.5,0.75,1},
          legend style={draw=none},
          axis line style={line width=0.25pt},xticklabel={\hphantom{1}},tick label style={font=\plotfont}, label style={font=\plotfont}]
\addplot[col1,domain=0:1,samples=100,line width=1pt]  {2*(1-x)};
\addplot[col2,domain=0:0.5,samples=100,line width=1pt]  {4*(1-2*x)};
\addplot[col3,domain=0:0.25,samples=100,line width=1pt]  {8*(1-4*x)};
\end{axis}
\end{tikzpicture}
\begin{tikzpicture}
\begin{axis}[grid=both,
height=3.5cm,width=4.5cm,xscale=1,
          grid style=dotted,
          xmin=0,ymin=0,
          xmax=1,ymax=8,
          restrict y to domain=0:10,
          restrict x to domain=0:10,
          legend style={draw=none},
          xtick={0,0.25,0.5,0.75,1},
          axis line style={line width=0.25pt},xticklabel={\hphantom{0}},yticklabel=\empty,tick label style={font=\plotfont}, label style={font=\plotfont}]
\addplot[col1,domain=0:0.5,samples=100,line width=1pt]  {2-2*x};
\addplot[col2,domain=0:0.25,samples=100,line width=1pt]  {4-8*x};
\addplot[col3,domain=0:0.125,samples=100,line width=1pt]  {8-32*x};

\addplot[col1,domain=0.5:1,samples=100,line width=1pt]  {2*x-1};
\addplot[col2,domain=0.25:0.5,samples=100,line width=1pt]  {8*x-2};
\addplot[col3,domain=0.125:0.25,samples=100,line width=1pt]  {32*x-4};
\addplot[col1,domain=0.5:1,samples=100,line width=1pt,dotted]coordinates  {(0.5,0)(0.5,1)};
\addplot[col2,domain=0.5:1,samples=100,line width=1pt,dotted]coordinates  {(0.25,0)(0.25,2)};
\addplot[col3,domain=0.5:1,samples=100,line width=1pt,dotted]coordinates  {(0.125,0)(0.125,4)};
\addplot[col1,domain=0.5:1,samples=100,line width=1pt,dotted]coordinates  {(1,0)(1,1)};
\addplot[col2,domain=0.5:1,samples=100,line width=1pt,dotted]coordinates  {(0.5,0)(0.5,2)};
\addplot[col3,domain=0.5:1,samples=100,line width=1pt,dotted]coordinates  {(0.25,0)(0.25,4)};

\end{axis}
\end{tikzpicture}

\begin{tikzpicture}
\begin{axis}[grid=both,
height=3.5cm,width=4.5cm,xscale=1,
          grid style=dotted,
          xmin=0,ymin=0,
          xmax=1,ymax=8,
          xlabel={$x$},
          restrict y to domain=0:10,
          restrict x to domain=0:10,
          xtick={0,0.25,0.5,0.75,1},
          legend style={draw=none},
          axis line style={line width=0.25pt},tick label style={font=\plotfont}, label style={font=\plotfont}]
\addplot[col1,domain=0:1,samples=100,line width=1pt]  {2*(1-x)};
\addplot[col2,domain=0:1,samples=100,line width=1pt]  {3*pow((1-x),2)};
\addplot[col3,domain=0:1,samples=100,line width=1pt]  {5*pow((1-x),4)};
\end{axis}
\end{tikzpicture}
\begin{tikzpicture}
\begin{axis}[grid=both,
height=3.5cm,width=4.5cm,xscale=1,
          grid style=dotted,
          xmin=0,ymin=0,
          xmax=1,ymax=8,
          xlabel={$x$},
          restrict y to domain=0:10,
          restrict x to domain=0:10,
          xtick={0,0.25,0.5,0.75,1},
          legend style={draw=none,at={(0.975,0.95)},fill=none,font=\plotfont},
          axis line style={line width=0.25pt},yticklabels=\empty,legend cell align={left},tick label style={font=\plotfont}, label style={font=\plotfont}]
\addplot[col1,domain=0:0.5,samples=100,line width=1pt]  {2-2*x};
\addplot[col2,domain=0:0.5,samples=100,line width=1pt]  {3/4*pow(2-2*x,2)};
\addplot[col3,domain=0:0.5,samples=100,line width=1pt]  {5/16*pow(2-2*x,4)};

\addplot[col1,domain=0.5:1,samples=100,line width=1pt]  {2*x-1};
\addplot[col2,domain=0.5:1,samples=100,line width=1pt]  {3/4*pow(2*x-1,2)};
\addplot[col3,domain=0.5:1,samples=100,line width=1pt]  {5/16*pow(2*x-1,4)};
\addplot[col1,domain=0.5:1,samples=100,line width=1pt,dotted]coordinates  {(0.5,0)(0.5,1)};

\legend{$\eta=1$, $\eta=0.5$, $\eta=0.25$}
\end{axis}
\end{tikzpicture}
\end{minipage}
\begin{minipage}[b]{.42\textwidth}
\begin{tikzpicture}
\begin{axis}[ymajorgrids,
grid style=dotted,height=5.85cm,width=6cm,
          xmin=-1.05,ymin=-0.05,
          xmax=0.05,ymax=1.05,
          restrict y to domain=0:1,
          restrict x to domain=-2:1,
          xtick={-1.0,-0.75,-0.5,-0.25,0.0},
          ytick={0,0.25,0.5,0.75,1},xlabel={$x$},
          legend style={draw=none,fill=none,font=\plotfont},legend pos=north west,legend cell align={left},tick label style={font=\plotfont}, label style={font=\plotfont}]
\addplot[col1,domain=-2:0,samples=100,line width=1pt]  {0};
\addplot[black,domain=-1:0,samples=100,line width=1pt]  {(x+1)};
\addplot[col2,domain=-1/2:0,samples=100,line width=1pt]  {2*(x+1/2)};
\addplot[col3,domain=-1:0,samples=100,line width=1pt]  {pow(1+x,3)};

\addplot[col1,domain=0:1,samples=100,line width=1pt]  {1};
\addplot[col2,domain=-1/4:0,samples=100,line width=1pt,dotted]  {4*(x+1/4)};
\addplot[col3,domain=-1:0,samples=100,line width=1pt,dotted]  {pow(1+x,5)};
\addplot[col1,domain=-2:0,samples=100,line width=1pt,dotted]coordinates  {(0,0)(0,1)};
\legend{\(q\),\({W[q,\gamma]}\),\({W[q,\gamma_\eta^\text{s}]}\),\({W[q,\gamma_\eta^\text{e}]}\)}
\end{axis}
\end{tikzpicture}
\end{minipage}
 \caption{\textbf{Left:} In the \textbf{top} row, the spatial scaling, in the \textbf{bottom} row, the power scaling is visualized for two different values of $\gamma$, viz. a linear kernel ($\gamma \equiv 2(1-\cdot)$, \textbf{left}) and a non-monotone, piecewise-linear kernel ($\gamma \equiv 2(1-\cdot)  \chi_{(0,0.5)}(\cdot) + (2\cdot-1)\chi_{(0.5,1)}(\cdot)$, \textbf{right}). In all cases, the kernels $\gamma_\eta$ for $\eta \in \{1,0.5,0.25\}$ are shown; \textbf{Right:} For the given \(q\equiv \chi_{\R_{>0}}\) we visualize the smoothing by the nonlocal operator. For this, we compare the smoothing of the spatial scaling, i.e., $\gamma_{\eta}^{\text{s}} \equiv \sfrac{1}{\eta}\gamma\big(\sfrac{\cdot}{\eta}\big)$ with the power scaling, i.e.,  $\gamma_{\eta}^{\text{e}} \equiv c_\eta \gamma^{\sfrac{1}{\eta}}$. We chose $\eta = 1$ where both scalings coincide, i.e., $\gamma_1^\text{s} \equiv \gamma_{1}^{\text{e}}$ (black solid line),  $\eta = \sfrac{1}{2}$ (solid colored lines) and
 $\eta = \sfrac{1}{4}$ (dashed colored lines).
 }
\label{fig:kernel_comparison}
\end{figure}

\subsection{Outline}
In \cref{sec:introduction}, we introduced the problem setup and have demonstrated in \cref{subsec:comparison} how the presented result is a generalization upon the existing literature.
Next, \cref{sec:problem_statement} will present the precise statement of the dynamics under consideration as well as the fundamental assumptions on every datum, starting with the nonlocal dynamics in \cref{subsec:nonlocal}, continuing with the assumptions in \cref{subsec:assumptions} and \cref{subsec:local_dynamics} with the corresponding local conservation laws.

In \cref{sec:existence}, we tackle the existence and uniqueness of the nonlocal dynamics. Although there are plenty results available, our choice of more general kernels requires us to revisit this theory and obtain global existence based on a weakened form of maximum principle.

\Cref{sec:kernel} is dedicated to estimating the kernel class under consideration, mainly the prefactor \(c_\eta\) as introduced in \cref{eqn:cgamma}, in the nonlocal approximation---which will later turn out to be crucial.  In \cref{sec:TV_uniform}, a nonlocal surrogate term of exponential type is introduced for which uniform \(\sTV\) estimates are shown, leading to compactness in \(\sL^{1}\) and, additionally, to strong convergence of nonlocal solution to local weak solutions. 

Having obtained this convergence, one needs to show that the limit is also entropy admissible, which is demonstrated in \cref{sec:convergence}. Eventually, \cref{sec:open_problems} concludes this paper by highlighting open questions and future research directions.

\section{Precise problem statement and assumptions}\label{sec:problem_statement}
In this section, we start with a precise definition of the nonlocal conservation laws considered, the regularity and properties required on the nonlocal kernel, and we will introduce the solution concepts---entropy solutions with entropy-flux pairs---for the corresponding local conservation law.

\subsection{Nonlocal dynamics}\label{subsec:nonlocal}

\begin{definition}[Nonlocal conservation law]\label{defi:nonlocal_conservation_law}
 For a nonlocal kernel \(\gamma \in \sL^1(\R_{\geq 0};\R_{\geq 0})\), \(q_{0}\in \sL^\infty(\R)\) and \(T\in \R_{>0}\) the nonlocal conservation law considered in this contribution is given by the following Cauchy problem in the density \(q:\OT\rightarrow\R\):
 \begin{align}
\begin{split}
    q_{t}+\big(V(W[q,\gamma](t,x))q(t,x)\big)_{x}&=0,  \\
    q(0,x)&=q_{0}(x),  
\end{split}\label{eqn:nbl} && (t,x)\in\OT,\\
\intertext{with the nonlocal term defined as} 
        W[q,\gamma](t,x)&\coloneqq\int_{x}^{\infty} \gamma(y-x)q(t,y)\dd y, && (t,x)\in\OT.
        \label{eqn:nonlocalterm}
\end{align}
\end{definition}

\begin{assumption}[Initial datum and velocity]\label{ass:initialdatum_velocity} We assume the following
\begin{description}
    \item[Initial datum:] \(q_{0}\in \sBV_\text{loc}(\R;\R_{\geq0})\) with $|q_0|_{\sTV(\R)}< \infty$ and \( \|\cdot\|_{\sBV(\R)}\coloneqq\|\cdot\|_{\sL^{1}(\R)}+|\cdot|_{\sTV(\R)}\)
    \item[Velocity:] \(V\in\sW^{1,\infty}_{\loc}(\R): V'\leqq0 \text{ and }\R \ni s \mapsto sV(s) \text{ strict concave}\)
\end{description}
\end{assumption}
Since the assumptions on the involved kernels are of crucial importance in the following, we will introduce and discuss them separately in the following subsection.

\subsection{Assumptions on the involved kernel}\label{subsec:assumptions}
This subsection introduces the class of nonlocal kernels considered in this work.
\begin{assumption}[Assumptions on the integral kernel] \label{ass:gamma}
For the integration kernel $\gamma$ as in \cref{eqn:cgamma} we assume that $\exists \delta \in \R_{>0}$ s.t:
\begin{description}
    \item[1) Integrability and total variation bound:] \(\gamma \in  \sBV(\R_{>0};\R_{\geqq 0})\)
    \item[2) Bounded second derivative in arbitrary small neighborhood:] \(\gamma|_{(0,\delta)} \in \sW^{2,\infty}((0,\delta))\)
    \item[3) Negative derivative in zero:] \(\gamma'(0) < 0\)
    \item[4) Upper bound on \(\R_{>\delta}\):] \(\gamma(x) \geq \gamma(y) \ \forall (x,y) \in (0,\delta) \times \R_{>\delta} \)
\end{description}
Without loss of generality, we assume, in addition,
\begin{description}
    \item[5) Strict monotonicity on \((0,\delta)\):] \(\gamma'|_{(0,\delta)} < 0\)
    \item[6) Normalization:] \(\gamma(0) = 1.\)
\end{description}
\end{assumption}

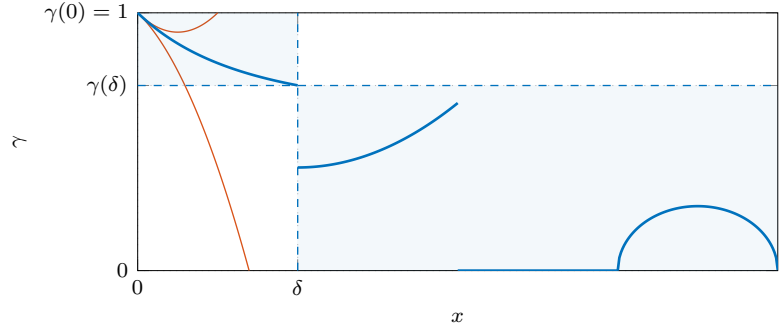
\begin{figure}
\centering
\begin{tikzpicture}
\begin{axis}[height=5cm,width=10cm,xmin=0,ymin=0,xmax=2,ymax=1,grid=both,grid style=dotted,restrict y to domain=0:10,restrict x to domain=0:10,axis line style={line width=0.25pt},xtick={0,0.5},
xticklabels={$0$,$\delta$},xlabel={$x$},ylabel={$\gamma$},ytick={0,0.718,1},yticklabels={$0$,$\gamma(\delta)$,$\gamma(0)=1$},tick label style={font=\plotfont}, label style={font=\plotfont}]
\addplot[col2,domain=0:0.25,samples=100,line width=.5pt,style=solid]  {1-1.2*x+4.8*x^2};
\addplot[col2,domain=0:0.348242,samples=100,line width=.5pt,style=solid]  {1-1.2*x-4.8*x^2};
\addplot[col1,domain=0:0.5,samples=100,line width=1pt]  {0.6+0.4/(1+x)^3};
\addplot[col1,domain=0.5:1,samples=100,line width=1pt]  {(x-0.5)^2+0.4};
\addplot[col1,domain=1:1.5,samples=100,line width=1pt]  {0};
\addplot[col1,domain=1.5:2,samples=100,line width=1pt]  {sqrt(x-1.5)*sqrt(2-x)};
\addplot[line width=.5pt, samples=50, smooth,domain=0:6,col1,style=dashed] coordinates {(0.5,0)(0.5,1)};
\addplot[line width=.5pt, samples=50, smooth,domain=0:6,col1,style=dashed] coordinates {(0,0.718)(2.0,0.718)};
\addplot [draw=none,color=blue,mark=none,fill=col1,fill opacity=0.05]coordinates {(0, 0.718) (0.5, 0.718)(0.5, 1)(0,1)};
\addplot [draw=none,color=blue,mark=none,fill=col1,fill opacity=0.05]coordinates {(0.5, 0) (0.5, 0.718)(2, 0.718)(2,0)};
\end{axis}
\end{tikzpicture}
\caption{Visualization of \cref{ass:gamma}, i.e., the assumption on the involved kernel functions. The bounded second derivative on \([0,\delta)\) for an arbitrary small \(\delta \in \R_{>0}\) and the negative derivative at zero are sketched by the \textcolor{col2}{red} quadratic over-/under-estimator (Item 2 and 3) which also play a crucial role in the estimates performed in \cref{sec:kernel}. The upper bound on \(\R_{>\delta}\) is visualized by the dashed \textcolor{col1}{blue} lines. In the top left \textcolor{col1}{blueish} area, we assume \(\gamma \in W^{2,\infty}\); in the lower right \textcolor{col1}{blueish} marked area, we only need \(\gamma \in \sBV\).}
\label{fig:ass_gamma}
\end{figure}
The first assumption, in particular $\gamma \in \sL^1(\R)$, is crucial to have the nonlocal term well-posed for $\eta \leq 1$ recalling that we scale the kernel with the exponent \(\eta^{-1}\). The second is necessary to have the solution $q_\eta$ satisfies a weak kind of maximum principle as provided in \cref{theo:max}.
The third assumption is not very restrictive as it only has to be satisfied for a small $\delta > 0$. It allows the estimation of $\gamma$ in a small neighborhood of zero by a quadratic function from above and below. The fourth---together with the negative derivative in zero---ensures that the kernel $\gamma_\eta$ converges to a Dirac distribution. For instance, for $\gamma \equiv \chi_{(0,1)}$, we obtain $\gamma_\eta \equiv \gamma$ for all $\eta > 0$. The fifth is an immediate consequence of the second and third points by choosing a sufficiently small \(\tilde{\delta}\in (0,\delta]\). 
The last point is a normalization as the assumption of \(\gamma\in \sBV(\R_{\geq0})\) implies \(\gamma\in\sBV(\R)\). The choice of  \(\gamma(0)=1\) is arbitrary as \(\gamma_{\eta}\) is (cf. \cref{eqn:cgamma}) anyhow rescaled by the normalization factor \(c_{\eta}\), but allows some notational simplifications in the theorems and proofs to follow.
To illustrate these assumptions, we have visualized them graphically in \cref{fig:ass_gamma}.
\subsection{Local dynamics}\label{subsec:local_dynamics}
As we aim for the singular limit convergence, i.e.,\ whether the solution to the nonlocal dynamics in \cref{defi:nonlocal_conservation_law} converges to its local counterpart if \(\gamma\rightarrow\delta\), we will first need to define what we mean with the local conservation law:
\begin{definition}[Local conservation law]\label{defi:local_conservation_law}
The conservation law related to the corresponding nonlocal conservation law \cref{defi:nonlocal_conservation_law} reads as follows in the variable \(q:\OT\rightarrow\R\)
\begin{equation}
 \begin{aligned}
    q_{t}+\big(V(q(t,x))q(t,x)\big)_{x}&=0,  && (t,x)\in\OT\\
    q(0,x)&=q_{0}(x),  && x\in\R
\end{aligned}
\label{eq:local_conservation_law}
\end{equation}
with \(q_{0}:\R\rightarrow\R\) denoting the initial datum.
\end{definition}

\begin{definition}[Entropy solution]\label{defi:entropy}
We call \(q^{*}\in \sC([0,T];\sL^{1}_{\loc}(\R))\cap \sL^{\infty}((0,T);\sL^{\infty}(\R))\) a weak entropy solution to \cref{eq:local_conservation_law} iff
it satisfies the following entropy inequality
for all \(\alpha\in\sC^{2}(\R)\) being convex, \(\beta\in \sC^{1}(\R)\) with \(\beta'\equiv\alpha'\cdot f'\) and
\[
f(x)=xV(x),\ \forall x\in\R,
\] and for all \(\phi\in \sC_{\text{c}}^{1}\big((-42,T)\times\R;\R_{\geq0}\big)\):
\begin{equation}
\begin{gathered}
\mathcal{E}[\phi,\alpha,q]\coloneqq\iint_{\OT}\!\!\!\!\alpha(q(t,x))\phi_{t}(t,x)+\beta(q(t,x))\phi_{x}(t,x)\dd x\dd t
+\int_{\R}\!\!\alpha(q_{0}(x))\phi(0,x)\dd x\geq 0.
\end{gathered}
\label{eq:Entropy}
\end{equation}
\end{definition}

\begin{theorem}[Existence and uniqueness of entropy solutions]\label{theo:existence_uniqueness_local}
    For the Cauchy problem in \cref{eq:local_conservation_law}, there exists a unique Entropy solution in the sense of \cref{eq:Entropy}.
\end{theorem}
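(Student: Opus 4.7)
The statement is the classical Kruzhkov well-posedness theorem specialized to the flux \(f(q)=qV(q)\), and I would treat it as such, pointing to the classical arguments and only flagging the adjustments needed for our precise hypotheses. First observe that \cref{ass:initialdatum_velocity} gives \(V\in \sW^{1,\infty}_{\loc}(\R)\) and \(sV(s)\) strictly concave, so the flux \(f\) is locally Lipschitz (and strictly concave); moreover, since \(q_{0}\in \sBV_{\loc}(\R)\) with finite total variation on \(\R\) and \(q_{0}\geq 0\), we actually have \(q_{0}\in \sL^{\infty}(\R)\cap \sBV(\R)\). This places us squarely in the Kruzhkov framework.

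\textbf{Existence.} I would produce a candidate entropy solution by the vanishing-viscosity method (equivalently, by a monotone finite-volume scheme such as Godunov or Lax--Friedrichs). Let \(q^{\mu}\) solve \(q^{\mu}_{t}+f(q^{\mu})_{x}=\mu q^{\mu}_{xx}\) with initial datum \(q_{0}\). Using the parabolic maximum principle one obtains \(0\le q^{\mu}\le \|q_{0}\|_{\sL^{\infty}}\) uniformly in \(\mu\); differentiating the equation, integrating, and using translation invariance of the PDE gives a uniform \(\sTV\) bound \(|q^{\mu}(t,\cdot)|_{\sTV(\R)}\le |q_{0}|_{\sTV(\R)}\). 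From the equation itself and the uniform spatial BV bound one deduces equicontinuity in time in \(\sL^{1}_{\loc}\). Helly's theorem then delivers a subsequence converging in \(\sC([0,T];\sL^{1}_{\loc}(\R))\) to some \(q^{*}\). Passing to the limit in the parabolic entropy inequality \(\alpha(q^{\mu})_{t}+\beta(q^{\mu})_{x}\le \mu \alpha(q^{\mu})_{xx}-\mu \alpha''(q^{\mu})(q^{\mu}_{x})^{2}\le \mu\alpha(q^{\mu})_{xx}\) (using convexity of \(\alpha\)) and testing against \(\phi\in \sC^{1}_{\mathrm c}\), the viscous term drops out in the limit and one recovers exactly \(\mathcal{E}[\phi,\alpha,q^{*}]\ge 0\) for every \(\sC^{2}\) convex \(\alpha\) with \(\beta'=\alpha'f'\); the initial datum term survives thanks to the time-continuity in \(\sL^{1}_{\loc}\).

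\textbf{Uniqueness.} For uniqueness I would appeal to Kruzhkov's doubling of variables. The only non-routine check is that the \(\sC^{2}\) convex entropy formulation of \cref{defi:entropy} is equivalent to the Kruzhkov inequality with the family \(\alpha_{k}(q)=|q-k|\), \(\beta_{k}(q)=\mathrm{sgn}(q-k)(f(q)-f(k))\). This is standard: the Kruzhkov entropies are uniform limits of smooth strictly convex entropies \(\alpha_{k,\eps}\in \sC^{2}\) (e.g.\ \(\alpha_{k,\eps}(q)=\sqrt{(q-k)^{2}+\eps^{2}}-\eps\)) with the associated \(\beta_{k,\eps}\) converging uniformly to \(\beta_{k}\) on compact sets; hence \(\mathcal{E}[\phi,\alpha,q^{*}]\ge 0\) for all admissible smooth \((\alpha,\beta)\) implies the Kruzhkov inequality, after which the usual doubling-of-variables argument yields the \(\sL^{1}\)-contraction \(\|q_{1}(t,\cdot)-q_{2}(t,\cdot)\|_{\sL^{1}(\R)}\le \|q_{1,0}-q_{2,0}\|_{\sL^{1}(\R)}\) for any two entropy solutions, and in particular uniqueness.

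\textbf{Main obstacle.} Nothing in the argument is genuinely novel; the only point that deserves care is the equivalence of the smooth entropy formulation used here with Kruzhkov's, since the statement is phrased with \(\sC^{2}\) convex \(\alpha\) rather than with \(|\cdot-k|\). Once this equivalence is recorded, both existence (vanishing viscosity plus BV compactness) and uniqueness (doubling of variables) are textbook. Alternatively, and more in the spirit of this paper, existence could be obtained a posteriori as a corollary of the main singular-limit result proved in the later sections, with uniqueness still supplied by Kruzhkov's contraction.
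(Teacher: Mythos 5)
Your proposal is correct and matches what the paper does: the paper simply cites the classical literature (Kruzhkov, Bressan, Eymard et al., Godlewski--Raviart) for exactly the vanishing-viscosity existence and doubling-of-variables uniqueness arguments you sketch, noting that the references cover "different types of entropy flux pairs" --- which is precisely the equivalence between the \(\sC^{2}\) convex entropies of \cref{defi:entropy} and the Kruzhkov family that you correctly flag as the only point needing care.
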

\begin{proof}
    The result can be found for different types of entropy flux pairs in \cite[Theorem 6.3]{bressan}, \cite[Theorem 19.1]{eymard}, \cite[Theorem 2, Theorem 5, Section 5 Item 4]{kruzkov}, \cite{godlewski}.
\end{proof}
\section{Existence/uniqueness nonlocal}\label{sec:existence}
In this section, we will state some relevant well-known results about existence and uniqueness in the presence of weak \(\sBV\) kernels as well as monotonicity violating kernels. This will become crucial for our analysis on the convergence later.

As it is crucial for our analysis, we will, however, give an existence and uniqueness result:
\begin{theorem}[Existence and uniqueness of weak solutions on small time-horizon]\label{lem:existence_uniqueness}
Given \cref{defi:nonlocal_conservation_law} together with \cref{ass:initialdatum_velocity} and \cref{ass:gamma}, it holds: There exists \(T^* \in (0,T]\) 
s.t.\ there is a unique weak solution
\[
q\in \sC\big([0,T^*];\sL^{1}_{\loc}(\R)\big)\cap \sL^{\infty}\big((0,T^*);\sTV(\R)\big).
\]
\end{theorem}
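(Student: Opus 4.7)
The plan is a Banach fixed-point argument in $\sC([0,T^*];\sL^1_{\loc}(\R))$ on a closed ball that encodes uniform $\sL^\infty$, $\sBV$, and non-negativity bounds on the density. Given a candidate $q$ in the ball, I would freeze the nonlocal argument by setting $a(t,x)\coloneqq V(W[q,\gamma](t,x))$ and consider the linear Cauchy problem
\begin{equation*}
\tilde q_t+(a(t,x)\tilde q)_x=0,\qquad \tilde q(0,\cdot)=q_0.
\end{equation*}
The key regularity observation, which distinguishes this setting from the usual Lipschitz-kernel literature, is that rewriting the nonlocal term as $W[q,\gamma](t,x)=\int_0^\infty\gamma(z)q(t,x+z)\dd z$ and exploiting $\gamma\in\sBV(\R_{>0})\subset \sL^\infty(\R_{\geq0})$ from \cref{ass:gamma} yields
\begin{equation*}
|W[q,\gamma](t,x_2)-W[q,\gamma](t,x_1)|\leq \|\gamma\|_{\sL^\infty(\R_{\geq0})}\,|q(t,\cdot)|_{\sTV(\R)}\,|x_2-x_1|,
\end{equation*}
so that $a$ is Lipschitz in $x$ uniformly in $t$ with constant controlled by the ball radius and the local Lipschitz constant of $V$ on the bounded range of $W$.

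Since $a$ is bounded and Lipschitz in $x$, the linear Cauchy problem is uniquely solved by the method of characteristics: along the flow $\dot X=a(\cdot,X)$ the density admits the representation $\tilde q(t,X(t;y))=q_0(y)\exp(-\int_0^t a_x(\sigma,X(\sigma;y))\dd\sigma)$, from which positivity is preserved and classical linear-transport estimates deliver
\begin{equation*}
\|\tilde q(t)\|_{\sL^\infty(\R)}\leq \|q_0\|_{\sL^\infty(\R)}e^{Lt},\qquad |\tilde q(t)|_{\sTV(\R)}\leq |q_0|_{\sTV(\R)}e^{Lt},
\end{equation*}
with $L$ depending only on the ball radius. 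Picking the radius strictly larger than $\|q_0\|_{\sL^\infty(\R)}+|q_0|_{\sTV(\R)}$ and $T^*$ so small that $e^{LT^*}$ does not exhaust it makes the map $T\colon q\mapsto \tilde q$ a self-map of the ball.

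Contractivity in $\sC([0,T^*];\sL^1_{\loc}(\R))$ then follows by writing the equation for $\delta\coloneqq T(q_1)-T(q_2)$, namely $\delta_t+(a_1\delta)_x=-((a_1-a_2)T(q_2))_x$ with $a_i\coloneqq V(W[q_i,\gamma])$. The Fubini-type bound $\|W[q_1,\gamma]-W[q_2,\gamma]\|_{\sL^1(K)}\leq\|\gamma\|_{\sL^1(\R_{\geq0})}\|q_1-q_2\|_{\sL^1(K+\supp\gamma)}$, the Lipschitzness of $V$, the uniform $\sBV$ bound on $T(q_2)$ just secured, and the finite propagation speed $\|a\|_{\sL^\infty}<\infty$ combine via a standard cut-off energy estimate into a Gronwall inequality of the form $\|\delta(t)\|_{\sL^1(K)}\leq C\int_0^t\|q_1-q_2\|_{\sL^1(K_\sigma)}\dd\sigma$ on slightly enlarged compacta $K_\sigma$. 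Shrinking $T^*$ once more makes $T$ a strict contraction, and Banach's theorem delivers the unique fixed point $q\in \sC([0,T^*];\sL^1_{\loc}(\R))\cap \sL^\infty((0,T^*);\sBV(\R))$, which is the sought weak solution of \cref{eqn:nbl}.

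The main obstacle, and the reason $T^*$ need not equal $T$, is that the Lipschitz constant of the frozen coefficient $a$ scales linearly with $|q|_{\sTV(\R)}$: the fixed-point ball must absorb the exponential growth of the $\sBV$ norm under iteration, and the self-mapping property fails once this growth saturates the radius. Extending the result to the full horizon $T$ requires a sharper global $\sBV$ estimate on the nonlinear nonlocal solution itself---for instance via the surrogate exponential weight introduced later in \cref{sec:TV_uniform}---and is outside the scope of this local well-posedness statement.
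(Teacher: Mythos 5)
Your overall strategy --- freeze the nonlocal coefficient, solve the resulting linear problem along characteristics, derive $\sL^\infty$/$\sTV$ self-map bounds, and close with Banach's fixed point theorem on a small horizon --- is essentially the strategy of the references the paper cites for this theorem (the paper gives no proof of its own beyond those citations), and your observation that $x\mapsto W[q,\gamma](t,x)$ is Lipschitz with constant $\|\gamma\|_{\sL^\infty(\R_{>0})}\,|q(t,\cdot)|_{\sTV(\R)}$ is correct and is indeed where \cref{ass:gamma} enters.

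There is, however, a concrete gap in the contraction step. \cref{ass:gamma} only requires $\gamma\in\sBV(\R_{>0})\cap\sL^1(\R_{>0})$: ``fixed support'' in this paper means the support does not shrink with $\eta$, not that it is compact, and the prototype $\gamma=\exp(-\cdot)$ has $\supp\gamma=\R_{\geq0}$. Likewise \cref{ass:initialdatum_velocity} only puts $q_0\in\sBV_{\loc}\cap\sL^\infty$ with finite total variation, so $q_0\notin\sL^1(\R)$ in general. In that regime your key estimate $\|W[q_1,\gamma]-W[q_2,\gamma]\|_{\sL^1(K)}\leq\|\gamma\|_{\sL^1}\|q_1-q_2\|_{\sL^1(K+\supp\gamma)}$ bounds the left side by an $\sL^1$ norm over an \emph{unbounded} set, which is not controlled by the seminorms of $\sC([0,T^*];\sL^1_{\loc}(\R))$; the same problem appears in $\|a_1-a_2\|_{\sL^\infty(K)}\leq\|V'\|_{\sL^\infty}\|\gamma\|_{\sL^\infty}\|q_1-q_2\|_{\sL^1([\inf K,\infty))}$. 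The nonlocal term has no finite domain of dependence, so the map is not a contraction for the topology you chose. Two standard repairs: (i) run the fixed point not on $q$ but on the nonlocal term $W$ itself in $\sC([0,T^*]\times\R)$, as the cited works do --- averaging by $\gamma$ converts the $\sL^1$-type error produced by shifted characteristics into a sup-norm error of $W$ at the price of a factor $\gamma(0)+|\gamma|_{\sTV(\R_{>0})}$, finite by \cref{ass:gamma}; or (ii) keep your scheme but work in the affine space $q_0+\sC([0,T^*];\sL^1(\R))$, checking that each iterate differs from $q_0$ by a globally integrable function (this uses $\|Y(t,\cdot)-\mathrm{id}\|_{\sL^\infty}\leq t\|V\|_{\sL^\infty}$ together with $|q_0|_{\sTV(\R)}<\infty$, and $\|\partial_x W\|_{\sL^1(\R)}\leq\|\gamma\|_{\sL^1}|q|_{\sTV(\R)}$ to put the Jacobian minus one into $\sL^1(\R)$), and then measure differences in the global $\sL^1(\R)$ norm. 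A secondary, minor point: the self-map bound $|\tilde q(t)|_{\sTV}\leq|q_0|_{\sTV}e^{Lt}$ suppresses the inhomogeneous term $\|\partial_x^2 a(s,\cdot)\|\,\|\tilde q\|_{\sL^\infty}$ (with $\partial_x^2 a$ measured as a finite measure) arising when the transport equation is differentiated; it is harmless on the ball precisely because $\gamma\in\sBV$ makes $\partial_x^2 W$ a finite measure, but it is a second place where the $\sBV$ regularity of the kernel is genuinely used and should be stated.
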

\begin{proof}
The proof can be found in \cite[Theorem 3.1, Theorem 3.3]{keimer2022discontinuous} for an even more general setup as well as in \cite[Theorem 2.20 \& Theorem 3.2 \& Corollary 4.3]{pflug} and, for \(\sBV\) kernels, in \cite{coclite2022existence}.
\end{proof}
A very important result is the stability of the solution when smoothing the initial datum and the velocity to obtain smooth solutions. This is detailed in the following theorem.
\begin{theorem}[Approximation by smooth solutions]\label{theo:stability}
By approximating the initial datum \(q_0\) and the velocity function \(V\) by smooth functions \(q_{0,\eps} \in C^\infty(\R)\) with \(|q_{0,\eps}|_{\sTV(\R)} \leq |q_0|_{\sTV(\R)}\) and \(V_\eps \in C^\infty(\R)\), the corresponding solution \(q_\eps\) for \cref{defi:nonlocal_conservation_law} satisfies \(\sC^\infty(\OT) \cap \sW^{2,\infty}(\OT)\) and approximates the regularized solution \(q\) in \(\sC([0,T];\sL^1(\R))\), i.e., \(\exists c\in \R_{>0}\) s.t. 
\begin{align}
    \|q_\eps - q\|_{\sC([0,T];\sL^1(\R))} \leq c\big(\|q_{0,\eps} - q_0\|_{\sL^1(\R)} + \|V_{\eps} - V\|_{\sW^{1,\infty}(\R)}\big).
\end{align}
\end{theorem}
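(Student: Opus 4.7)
The plan is to split the proof into two parts: first establishing the claimed regularity of $q_\eps$ under smooth data, then deriving the $\sL^1$-stability estimate by a Grönwall argument.

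\textbf{Regularity.} I would exploit the method of characteristics. Setting $a_\eps(t,x):=V_\eps(W[q_\eps,\gamma](t,x))$ and computing
\begin{align*}
\partial_x W[q_\eps,\gamma](t,x)=-\gamma(0)q_\eps(t,x)+\int_x^\infty \gamma'(y-x)\,q_\eps(t,y)\dd y,
\end{align*}
shows that $a_\eps$ is Lipschitz in $x$ since $\gamma\in\sBV$ and $q_\eps$ is bounded. The ODE $\dot\xi=a_\eps(t,\xi)$ is then uniquely solvable, and rewriting the conservation law in non-divergence form gives, along characteristics,
\begin{align*}
q_\eps(t,\xi(t))=q_{0,\eps}(\xi(0))\exp\Big(-\int_0^t\partial_x a_\eps(s,\xi(s))\dd s\Big).
\end{align*}
A bootstrapping argument using the smoothness of $q_{0,\eps}$, $V_\eps$ and the $\sW^{2,\infty}$-regularity of $\gamma$ near zero lifts $q_\eps$ to $\sC^\infty(\OT)\cap\sW^{2,\infty}(\OT)$; global existence on $[0,T]$ follows from \cref{lem:existence_uniqueness} combined with the uniform maximum principle of \cref{sec:existence}.

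\textbf{Stability.} Setting $e:=q_\eps-q$ and subtracting the two PDEs yields
\begin{align*}
e_t + \big(V_\eps(W[q_\eps,\gamma])\,e\big)_x = -\Big[\big(V_\eps(W[q_\eps,\gamma])-V_\eps(W[q,\gamma])\big)q+\big(V_\eps-V\big)(W[q,\gamma])\,q\Big]_x.
\end{align*}
Testing against a mollified $\sgn(e)$ (Kruzhkov-style doubling of variables) eliminates the transport-type term on the left after $x$-integration, and using that $W[q_\eps,\gamma]-W[q,\gamma]=W[e,\gamma]$ with $\|W[e,\gamma]\|_{\sL^\infty}\le \|\gamma\|_{\sL^\infty}\|e\|_{\sL^1(\R)}$ together with the uniform $\sTV$-bound on $q$, I would arrive at a Grönwall inequality
\begin{align*}
\tfrac{d}{dt}\|e(t,\cdot)\|_{\sL^1(\R)}\le C_1\,\|e(t,\cdot)\|_{\sL^1(\R)}+C_2\,\|V_\eps-V\|_{\sW^{1,\infty}(\R)},
\end{align*}
where $C_1$ depends on $\|V_\eps'\|_{\sL^\infty}$, $\|\gamma\|_{\sBV}$ and the uniform $\sTV$-bound on $q,q_\eps$, and $C_2$ on $\|q\|_{\sL^\infty\cap\sBV}$. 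Together with $\|e(0,\cdot)\|_{\sL^1(\R)}=\|q_{0,\eps}-q_0\|_{\sL^1(\R)}$, Grönwall then yields the claimed estimate.

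\textbf{Main obstacle.} The delicate point is handling the $x$-derivative of the nonlocal source $\big(V_\eps(W[q_\eps,\gamma])-V_\eps(W[q,\gamma])\big)q$: differentiating in $x$ produces a boundary-type contribution $\gamma(0)\,e(t,x)$ and an integral of $\gamma'$ against $e$, both of which must be controlled by $\|e(t,\cdot)\|_{\sL^1(\R)}$ with constants independent of $\eps$. This forces us to combine the $\sBV$-regularity of $\gamma$ with the uniform $\sTV$-bound on $q$ inherited from \cref{lem:existence_uniqueness}. A secondary subtlety is making the test against $\sgn(e)$ rigorous in the presence of only weak regularity of $q$, which requires a Kruzhkov-type regularisation/doubling-of-variables argument.
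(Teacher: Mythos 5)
Your proposal is necessarily more detailed than the paper's own ``proof'', which consists entirely of citations to characteristics-based stability results (\cite{pflug4} for perturbations of the initial datum, \cite{keimer2022discontinuous} for smoothing of \(V\)). Your regularity half --- Lipschitz bound on \(a_\eps=V_\eps(W[q_\eps,\gamma])\) from \(\gamma\in\sBV\), representation along characteristics, bootstrap --- is exactly the route those references take, so that part aligns with the paper. (Minor slip: \(\partial_xW=-\gamma(0)q-\int_x^\infty\gamma'(y-x)q\dd y\), cf.\ \cref{eq:W_x_1}; the sign of the integral term in your formula is wrong, though harmless since you only use absolute bounds.) Your stability half is a genuinely different route: direct subtraction of the two PDEs plus a Vol'pert/Kruzhkov-type \(\sgn\) test and Gr\"onwall, instead of estimating the flow maps and their Jacobians. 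That is a legitimate alternative and arguably cleaner to write down once \(q_\eps\) is smooth.

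Two points need attention. First, the quasilinear remainder: differentiating \(\big(V_\eps(W[q_\eps,\gamma])-V_\eps(W[q,\gamma])\big)q\) in \(x\) produces not only the boundary term \(\gamma(0)e\) and the \(\gamma'\)-convolution you name (these come from \(V_\eps'(W[q_\eps])\,\partial_xW[e]\)), but also the term \(\big(V_\eps'(W[q_\eps])-V_\eps'(W[q])\big)\partial_xW[q]\,q\). Bounding this linearly by \(\|e(t,\cdot)\|_{\sL^1(\R)}\) requires a Lipschitz modulus for \(V_\eps'\), i.e.\ control of \(\|V_\eps''\|_{\sL^\infty}\), which is \emph{not} uniform in \(\eps\) under \cref{ass:initialdatum_velocity} (\(V\in\sW^{1,\infty}_{\loc}\) only). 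You must either handle this term differently (e.g.\ measuring \(\partial_xW[q]\) in \(\sL^1\) via \(\|\partial_xW[q](t,\cdot)\|_{\sL^1(\R)}\leq(\gamma(0)+|\gamma|_{\sTV(\R_{>0})})\|q(t,\cdot)\|_{\sL^1(\R)}\), at the price of a modulus-of-continuity term rather than a linear one) or accept that \(c\) depends on the smoothing of \(V\), which the statement as written does not permit. Second, a circularity: you invoke a ``uniform maximum principle'' to extend the solution to \([0,T]\), but the weakened maximum principle \cref{theo:max} is itself proven \emph{using} \cref{theo:stability}; for fixed \(\eps\) and \(\eta\) the extension should instead be argued directly from the a priori \(\sL^\infty\) and \(\sTV\) bounds underlying \cref{lem:existence_uniqueness}.
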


\begin{proof}
We do not go into details here, and refer the reader to \cite[Theorem 3.1]{pflug4} where such a stability result with regard to the initial datum is proven for kernels with finite support (only a small change is required to extend it to the general kernels considered here) and the method of characteristics. One can additionally smooth the assumed Lipschitz-continuous velocity \(V\) as done in \cite[Theorem 3.2]{keimer2022discontinuous} for a weaker topology, but again relying on characteristics.
\end{proof}

\section{Properties of the nonlocal kernel class considered}\label{sec:kernel}
In order to show the convergence of $q_\eta$ to the corresponding (local) Entropy solution (the singular limit problem), we need to show properties of the considered class of nonlocal kernels. For this, we first study lower and upper bounds for the scaling function $c_\eta$ introduced in \cref{eqn:cgamma}.
\begin{proposition}[Bounds for $c_\eta$ as in \cref{eqn:cgamma}]\label{prop:bounds_cgamma}
There exists $\eps \in \big(0,\frac{1}{2}\big)$ s.t.
\begin{align*}
 -  \tfrac{\gamma'(0)}{\eta} \leq c_\eta \leq   - \tfrac{\gamma'(0)}{\eta(1-2\eta)}, \qquad \forall \eta \in (0,\eps).
\end{align*}

\end{proposition}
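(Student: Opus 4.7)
Writing $c_\eta = 1/I_\eta$ with $I_\eta \: \int_0^\infty \gamma(y)^{1/\eta}\,\dd y$ and $a \: -\gamma'(0) > 0$, the claim is the two-sided integral estimate $\eta(1-2\eta)/a \leq I_\eta \leq \eta/a$. The underlying mechanism is a Laplace-type concentration of $\gamma(y)^{1/\eta}$ near $y=0$: because $\gamma$ is strictly decreasing on $(0,\delta)$ with $\gamma(0)=1$, and Assumption~4 pins every value on $(\delta,\infty)$ below the values on $(0,\delta)$, the full mass of $I_\eta$ accumulates in an $O(\eta)$-neighborhood of $0$.

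First I would dispose of the tail. By Assumption~5, $\lambda \: \gamma(\delta/2) < 1$; combining this with Assumption~4 gives $\gamma(y) \leq \lambda$ for every $y \geq \delta/2$, hence
\[
\int_{\delta/2}^\infty \gamma(y)^{1/\eta}\,\dd y \leq \lambda^{1/\eta - 1}\,\|\gamma\|_{\sL^1(\R_{>0})},
\]
which decays faster than any polynomial in $\eta$. Thus $I_\eta$ agrees, up to a super-polynomially small error, with $\int_0^{\delta/2} \gamma(y)^{1/\eta}\,\dd y$, and all asymptotic analysis reduces to an arbitrarily small neighborhood of $0$.

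On this neighborhood I would use Assumption~2 with $K\:\|\gamma''\|_{\sL^\infty((0,\delta))}$ to write the Taylor envelope $1 - ay - \tfrac{K}{2}y^2 \leq \gamma(y) \leq 1 - ay + \tfrac{K}{2}y^2$, then combine it with the elementary inequalities $1+z \leq \ee^z$ and $\log(1+z) \geq z - z^2$ (valid for $|z|\leq 1/2$) to upgrade this into the exponential envelopes
\[
\ee^{-ay/\eta - C_1 y^2/\eta} \;\leq\; \gamma(y)^{1/\eta} \;\leq\; \ee^{-ay/\eta + C_2 y^2/\eta}
\]
on some smaller neighborhood $(0,\delta_0)$. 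The substitution $y = \eta s/a$ rewrites the inner integral as $(\eta/a)\int_0^{a\delta_0/\eta} \ee^{-s}\bigl(1 + O(\eta s^2)\bigr)\,\dd s$, and using $\int_0^\infty \ee^{-s}\,\dd s = 1$ together with $\int_0^\infty s^2 \ee^{-s}\,\dd s = 2$ yields $I_\eta = (\eta/a)(1 + O(\eta))$ with explicit constants controlled by $a$, $K$ and $\delta$. Carefully tracking the \emph{sign} of the $O(\eta)$ correction from the two envelopes separates the upper and lower bounds of the proposition.

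The main obstacle will be nailing down the precise constant $2$ in the factor $(1-2\eta)$: a naive global application of $1+z \leq \ee^z$ across the inner interval costs a multiplicative constant that would only deliver something like $c_\eta \geq a/(2\eta)$. To avoid this loss I would split the inner integral at the $\eta$-dependent scale $y_\eta = \eta^\alpha$ for some $\alpha \in (1/2, 1)$; on $(0,y_\eta)$ the quadratic error $y^2/\eta$ is uniformly $o(1)$, so the exponential envelope is tight to leading order and the constant $1/a$ of the leading term is preserved, while on $(y_\eta,\delta_0)$ the integrand is already exponentially small in $\eta^{-(1-\alpha)}$ and is absorbed into the tail error from the first step. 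Coordinating these scales so that all remainders collapse to $O(\eta^2)$ rather than $O(\eta)$ with a lossy prefactor is exactly what produces the sharp two-sided bound stated in the proposition.
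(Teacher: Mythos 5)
Your overall architecture (kill the tail using Assumptions 4--5, Taylor-expand $\gamma$ near $0$, rescale by $y=\eta s/a$) is the same as the paper's, but there is a genuine gap at the decisive step, and your self-diagnosed ``main obstacle'' points at the wrong bound. The factor $(1-2\eta)$ in the upper bound for $c_\eta$ is actually the \emph{easy} part: the paper does not extract a sharp constant $2$ at all, it only shows that $\tfrac{1}{\eta}\big(\tfrac{1}{\eta c_\eta}+\tfrac{1}{\gamma'(0)}\big)\to \tfrac{1}{\gamma'(0)}$ and then brackets this quantity in the generous interval $\big[\tfrac{2}{\gamma'(0)},0\big]$ for small $\eta$; any constant larger than $1$ would do in place of $2$. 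The genuinely delicate bound is the \emph{lower} bound $c_\eta\geq -\gamma'(0)/\eta$, i.e.\ the errorless estimate $I_\eta\leq \eta/a$ with $a=-\gamma'(0)$, which corresponds to the right endpoint $0$ of that interval. This requires knowing that the first-order correction in $I_\eta=\tfrac{\eta}{a}(1-\eta+o(\eta))$ is \emph{strictly negative}. In the paper this sign comes from integrating the linear envelope as a power function exactly,
\begin{equation*}
\int_0^{\hat\delta}\big(1+\lambda x\big)^{\frac{1}{\eta}}\dd x=\frac{\big(1+\lambda\hat\delta\big)^{\frac{1}{\eta}+1}-1}{\lambda\big(\tfrac{1}{\eta}+1\big)},
\end{equation*}
whose denominator $\tfrac{1}{\eta}+1$ produces the crucial relative factor $\tfrac{1}{1+\eta}=1-\eta+O(\eta^2)<1$.

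Your exponential envelopes discard exactly this information. The chain $\gamma(y)\leq 1-ay+\tfrac{K}{2}y^2\leq \ee^{-ay+Ky^2/2}$ uses $1+z\leq \ee^z$, which \emph{increases} the upper envelope, and the resulting majorant satisfies $\int_0^{\infty}\ee^{-ay/\eta+Ky^2/(2\eta)}\dd y>\int_0^\infty \ee^{-ay/\eta}\dd y=\tfrac{\eta}{a}$. So no matter how you split the domain at $y_\eta=\eta^\alpha$, a pointwise upper bound that itself integrates to \emph{more} than $\eta/a$ can never yield $I_\eta\leq\eta/a$; the best your scheme produces is $I_\eta\leq\tfrac{\eta}{a}(1+C\eta)$ and hence only $c_\eta\geq\tfrac{a}{\eta}(1-Ca\eta+\ldots)$, strictly weaker than the stated proposition even if all remainders are controlled at order $\eta^2$ (a remainder of the wrong sign, however small, still ruins the bound). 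To repair the argument you must keep the linear envelope in power form, $\gamma(y)^{1/\eta}\leq\big(1+(\gamma'(0)+\tfrac{K}{2}\hat\delta)y\big)^{1/\eta}$ on $(0,\hat\delta)$ with $\hat\delta=\eta^{3/4}$ (say), integrate it exactly to pick up the $\tfrac{1}{1+\eta}$ factor, and only then verify that this negative $-\eta$ correction dominates the $O(\eta^{3/4}\cdot\eta)=o(\eta)$ perturbation of the slope and the super-polynomially small tail --- which is precisely the route the paper takes.
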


\begin{proof}
Due to \cref{ass:gamma}, $\exists \delta \in \R_{>0}$ s.t. $\gamma|_{(0,\delta)} \in \sW^{2,\infty}((0,\delta))$ and, thus, we can bound $\gamma$ on $(0,\delta)$ from above and below by a linear function because the second derivative of $\gamma$ is bounded on $(0,\delta)$, i.e., $\alpha \: \|\gamma''\|_{\sL^\infty((0,\delta))}$. For all $\hat \delta \in (0,\min\{\delta,\gamma']$ we obtain, by a Taylor expansion,
\begin{align}
1+x\big(\gamma'(0) - \tfrac{\alpha }{2} \hat \delta\big)&\leq \gamma(x) \leq 1+x\big(\gamma'(0) + \tfrac{\alpha }{2} \hat \delta\big)&& \forall x\in (0,\hat \delta). \label{eq:linear_bounds_gamma}
\end{align}
These estimates are visualized in \cref{fig:gamma_est}.
\begin{figure}
\centering
\begin{tikzpicture}
\begin{axis}[height=5cm,width=10cm,xmin=0,ymin=-0.7,xmax=1,ymax=1,grid=both,grid style=dotted,axis line style={line width=0.25pt},xtick={0,0.6,2},
xticklabels={$0$,$\hat \delta$,$\delta$},xlabel={$x$},ytick={0.1556,1},yticklabels={$\gamma(\delta)$,$\gamma(0)=1$},legend style={draw=none,at={(0.02,0.02)},anchor=south west,font=\plotfont},legend cell align={left},tick label style={font=\plotfont}]
\addplot[col2,domain=0:2,samples=100,line width=1pt]  {(0.6+0.4/(1+x/4)^3)*3-2};
\addplot[col1,domain=0:2,samples=100,line width=.5pt,style=solid]  {(1-1.2*(x/4)+4.8*(x/4)^2)*3-2};
\addplot[col3,line width=1pt]coordinates  {(0,1)(0.6,0.7840)};
\addplot[col3,line width=.5pt,dashed]coordinates  {(0,1)(10/9,0)};
\addplot[col1,domain=0:2,samples=100,line width=.5pt,style=solid]  {(1-1.2*(x/4)-4.8*(x/4)^2)*3-2};
\addplot[col3,line width=1pt]coordinates  {(0,1)(0.6,0.1360)};
\addplot[only marks,col3,line width=1pt,mark=o]coordinates  {(0.6,0.7840)(0.6,0.1360)};
\legend{$\gamma$, $x \mapsto 1+\gamma'(0)x\pm\frac{\alpha}{2}x^2$, $x \mapsto 1+(\gamma'(0)\pm\frac{\alpha}{2}\hat \delta)x$,$x\mapsto 1+\gamma'(0)x$}
\end{axis}
\end{tikzpicture}
\caption{Sketch to visualize the ideas of the first steps in the proof of \cref{prop:bounds_cgamma} showing the \textcolor{col1}{quadratic} and \textcolor{col3}{linear} estimators for the kernel $\gamma$. }\label{fig:gamma_est}
\end{figure}
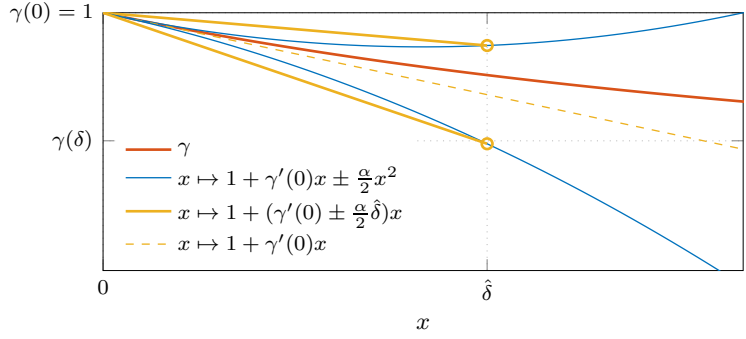
 First, we consider the lower bound in \cref{eq:linear_bounds_gamma} and w.l.o.g.\ assume $\hat \delta$ small enough s.t.\ the left hand side of \cref{eq:linear_bounds_gamma} is non-negative and obtain 
\begin{align}
       \tfrac{1}{\eta c_\eta} &= \tfrac{1}{\eta}\int_{0}^{\infty} \gamma(x)^{\frac{1}{\eta}} \dd x  \geq  \tfrac{1}{\eta}\int_{0}^{\hat \delta} \Big(1+x\big(\gamma'(0) - \tfrac{\alpha }{2} \hat \delta\big)\Big)^{\frac{1}{\eta}} \dd x  \\
       &= \tfrac{\big(1+\hat \delta \big(\gamma'(0) - \tfrac{\alpha }{2} \hat \delta\big)\big)^{\frac{1}{\eta}+1}-1}{\big(\gamma'(0) - \tfrac{\alpha }{2} \hat \delta\big)(1+\eta)} \label{eq:lb_01}\\
    \intertext{for $\eta$ small enough, we can chose $\hat \delta = \eta^{\frac{3}{4}}$}
    &   = \tfrac{\big(1+\eta^{\frac{3}{4}}\gamma'(0) - \eta^{\frac{3}{2}}\tfrac{\alpha }{2} \big)^{\frac{1}{\eta}+1}-1}{\big(\gamma'(0) - \tfrac{\alpha }{2} \eta^{\frac{3}{4}}\big)(1+\eta)} \quad \stackrel{\eta \rightarrow 0}{\longrightarrow} \quad -\tfrac{1}{\gamma'(0)}. \label{eq:lb_02}
\end{align}
Analogously, for the upper bound we obtain 
\begin{align}
       \tfrac{1}{\eta c_\eta} &= \tfrac{1}{\eta}\int_{0}^{\infty} \gamma(x)^{\frac{1}{\eta}} \dd x  \\
       &\leq \tfrac{1}{\eta}\int_{0}^{\hat \delta} \Big(1+x\big(\gamma'(0) + \tfrac{\alpha }{2} \hat \delta\big)\Big)^{\frac{1}{\eta}} \dd x  + \tfrac{1}{\eta} \gamma(\hat \delta)^{\frac{1}{\eta}-1}\int_{\hat \delta}^{\infty} \gamma(x) \dd x \label{eq:ub_01}\\
       &\leq \tfrac{\big(1+\hat \delta \big(\gamma'(0) + \tfrac{\alpha }{2} \hat \delta\big)\big)^{\frac{1}{\eta}+1}-1}{\big(\gamma'(0) + \tfrac{\alpha }{2} \hat \delta\big)(1+\eta)} + \Big(1+\hat \delta\big(\gamma'(0) + \tfrac{\alpha }{2} \hat \delta\big)\Big)^{\frac{1}{\eta}-1} \|\gamma\|_{\sL^1(\R_{>0})}\notag\\
    \intertext{again, choosing for $\eta$ small enough $\hat \delta = \eta^{\frac{3}{4}}$}
    &   = \tfrac{\big(1+\eta^{\frac{3}{4}}\gamma'(0) + \eta^{\frac{3}{2}}\tfrac{\alpha }{2} \big)^{\frac{1}{\eta}+1}-1}{\big(\gamma'(0) + \tfrac{\alpha }{2} \eta^{\frac{3}{4}}\big)(1+\eta)} + \big(1+\eta^{\frac{3}{4}}\gamma'(0) + \eta^{\frac{3}{2}}\tfrac{\alpha }{2} \big)^{\frac{1}{\eta}-1} \|\gamma\|_{\sL^1(\R_{>0})}  \\
    &\stackrel{\eta \rightarrow 0}{\longrightarrow} \quad -\tfrac{1}{\gamma'(0)}.\label{eq:ub_03}
\end{align}
As both limits coincide, we thus obtain
\begin{align}
  \lim_{\eta \rightarrow 0}  \tfrac{1}{\eta c_\gamma(\eta)} = -\tfrac{1}{\gamma'(0)} \label{eqn:limit_cgamma}
\end{align}
To show that this convergence is of order $\mathcal O(\eta)$ for $\eta \rightarrow 0$, we recall the inequalities \eqref{eq:lb_01} to \eqref{eq:lb_02} and \eqref{eq:ub_01} to \eqref{eq:ub_03} for small enough \(\eta\), adding $\gamma'(0)^{-1}$ to obtain
\begin{align*}
\tfrac{\big(1+\eta^{\frac{3}{4}}\gamma'(0) - \eta^{\frac{3}{2}}\tfrac{\alpha }{2} \big)^{\frac{1}{\eta}+1}-1}{\big(\gamma'(0) - \tfrac{\alpha }{2} \eta^{\frac{3}{4}}\big)(1+\eta)} + \tfrac{1}{\gamma'(0)} \leq\tfrac{1}{\eta c_{\gamma}(\eta)}+\tfrac{1}{\gamma'(0)} \leq \tfrac{\big(1+\eta^{\frac{3}{4}}\gamma'(0) + \eta^{\frac{3}{2}}\tfrac{\alpha }{2} \big)^{\frac{1}{\eta}+1}-1}{\big(\gamma'(0) - \tfrac{\alpha }{2} \eta^{\frac{3}{4}}\big)(1+\eta)} + \tfrac{1}{\gamma'(0)}.
\end{align*}
First, we have a look at the lower bound in the previous estimate, divide by $\eta$, and conclude
\begin{align}
\lim_{\eta \rightarrow 0}\tfrac{1}{\eta}\Bigg(\tfrac{\big(1+\eta^{\frac{3}{4}}\gamma'(0) - \eta^{\frac{3}{2}}\tfrac{\alpha }{2} \big)^{\frac{1}{\eta}+1}-1}{\big(\gamma'(0) - \tfrac{\alpha }{2} \eta^{\frac{3}{4}}\big)(1+\eta)} + \tfrac{1}{\gamma'(0)}\Bigg) 
 &= \lim_{\eta \rightarrow 0}\tfrac{\big(1+\eta^{\frac{3}{4}}\gamma'(0)  \big)^{\frac{1}{\eta}+1}+ \eta}{\gamma'(0)(1+\eta)\eta} 
  = \tfrac{1}{\gamma'(0)}.
\end{align}
 The analogous estimate with the same limit can be performed for the upper bound. Thus, for each $\eps \in \R_{>0}$, there exists $\bar \eta \in \R_{>0}$ s.t.
 \begin{align*}
\tfrac{1}{\gamma'(0)}-\eps\leq\tfrac{1}{\eta} \big(\tfrac{1}{\eta c_{\gamma}(\eta)}+\tfrac{1}{\gamma'(0)}\big) \leq \tfrac{1}{\gamma'(0)}+\eps \qquad \forall \eta \in (0,\bar \eta)
\end{align*}
 Consequently, there exists $\tilde \eta \in \R_{>0}$ small enough s.t. 
\begin{align*}
\tfrac{2}{\gamma'(0)}\leq\tfrac{1}{\eta} \big(\tfrac{1}{\eta c_\eta}+\tfrac{1}{\gamma'(0)}\big) \leq 0 \qquad \forall \eta \in (0,\tilde \eta)
\end{align*}
and, consequently (keeping in mind that $\gamma'(0)<0$ by \cref{ass:gamma}),
\begin{align*}
\tfrac{2 \eta}{\gamma'(0)} &\leq \tfrac{1}{\eta c_\eta}+\tfrac{1}{\gamma'(0)} \leq 0 \qquad
\Longleftrightarrow  \qquad -\tfrac{\gamma'(0)}{\eta(1-2 \eta)}  \geq c_\eta \geq -\tfrac{\gamma'(0)}{\eta} \qquad \forall \eta \in (0,\tilde \eta)
\end{align*}
which concludes the proof.
\end{proof}

\begin{proposition}[Property of the nonlocal weight and its derivative] \label{prop:prop_gamma_deriv}
Let $\gamma$ satisfy \cref{ass:gamma} and $a \in \sL^\infty((0,1);\R_{\geq 0})$ be given. Then, there exist $\mD_1,\mD_2 \in \R_{>0}$ and $\hat \eta \in (0,1]$ s.t.\ the following inequality holds:
\begin{align}
  \Big\|\gamma_\eta + \eta a(\eta)\gamma_\eta'\Big\|_{\sL^1(\R_{>0})} &\leq \eta \mD_1+\big|1+a(\eta)\gamma'(0)\big| \mD_2\qquad\forall \eta \in (0,\hat \eta) \label{eq:the_key_equation}.
\end{align}
\end{proposition}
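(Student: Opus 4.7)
The starting point is the algebraic identity obtained by differentiating $\gamma_\eta = c_\eta \gamma^{1/\eta}$ and factoring out the common power:
\[
\gamma_\eta(x) + \eta a(\eta)\gamma_\eta'(x) = c_\eta \gamma(x)^{1/\eta - 1}\bigl[\gamma(x) + a(\eta)\gamma'(x)\bigr].
\]
The bracketed factor is precisely what produces, via Taylor expansion near zero, the \emph{resonant} constant $1 + a(\eta)\gamma'(0)$; this is the residual measuring how far $a(\eta)$ is from the critical value $-1/\gamma'(0)$ suggested by the asymptotics $\eta c_\eta \to -1/\gamma'(0)$ of \cref{prop:bounds_cgamma}. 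The plan is to split $\R_{>0} = (0,\hat\delta) \cup (\hat\delta,\infty)$, with $\hat\delta \in (0,\delta)$ small enough that the second-order Taylor estimate yields $\gamma(x) \leq 1 + \beta x$ on $(0,\hat\delta)$ for some $\beta < 0$ (concretely, taking $\beta = \gamma'(0)/2$ works once $\hat\delta \leq |\gamma'(0)|/\alpha$ with $\alpha = \|\gamma''\|_{\sL^\infty((0,\delta))}$).

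On the near region $(0,\hat\delta)$, the $\sW^{2,\infty}$ regularity of \cref{ass:gamma} item 2 gives $|\gamma(x) - 1 - \gamma'(0)x| \leq \tfrac{\alpha}{2}x^2$ and $|\gamma'(x) - \gamma'(0)| \leq \alpha x$, hence
\[
|\gamma(x) + a(\eta)\gamma'(x)| \leq |1 + a(\eta)\gamma'(0)| + C_1 x + C_2 x^2,
\]
where $C_1, C_2$ depend only on $\|a\|_{\sL^\infty}$, $\gamma'(0)$, and $\alpha$. I then estimate the three integrals of $c_\eta \gamma^{1/\eta - 1}$ against $1$, $x$, and $x^2$. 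For the constant term, $\gamma \geq \gamma(\hat\delta) > 0$ on $(0,\hat\delta)$ yields
\[
\int_0^{\hat\delta} c_\eta \gamma(x)^{1/\eta-1}\, \dd x \leq \gamma(\hat\delta)^{-1}\!\!\int_0^\infty c_\eta \gamma(x)^{1/\eta}\, \dd x = \gamma(\hat\delta)^{-1},
\]
which yields the constant $\mD_2$. For the $x^k$ terms with $k \in \{1,2\}$, I use the pointwise majorant $\gamma(x)^{1/\eta - 1} \leq e^{(1/\eta - 1)\beta x}$ (valid since $\gamma(x) \leq 1 + \beta x \leq e^{\beta x}$ and $1/\eta - 1 > 0$) and compute the Gamma integrals
\[
\int_0^{\infty}\! e^{(1/\eta - 1)\beta x} x^k\, \dd x = \frac{k!}{|(1/\eta-1)\beta|^{k+1}} = \mathcal{O}(\eta^{k+1}),
\]
which, multiplied by $c_\eta = \mathcal{O}(\eta^{-1})$ from \cref{prop:bounds_cgamma}, contribute $\mathcal{O}(\eta^k)$ and hence $\mathcal{O}(\eta)$.

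On the far region $(\hat\delta,\infty)$, items 4 and 5 of \cref{ass:gamma} give $\gamma(x) \leq \gamma(\hat\delta) < 1$, so $\gamma(x)^{1/\eta - 1} \leq \gamma(\hat\delta)^{1/\eta - 1}$ decays super-polynomially in $\eta$. Combined with the BV-type bound $\int_{\hat\delta}^\infty |\gamma + a(\eta)\gamma'|\, \dd x \leq \|\gamma\|_{\sL^1(\R_{>0})} + \|a\|_{\sL^\infty}|\gamma|_{\sTV(\R_{>0})}$ and $c_\eta = \mathcal{O}(\eta^{-1})$, this contribution is already $\mathcal{O}(\eta)$ (indeed vanishes faster than any polynomial). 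Summing both regions delivers the inequality with $\mD_2 = \gamma(\hat\delta)^{-1}$ and $\mD_1$ absorbing all $\mathcal{O}(\eta)$ pieces.

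The main conceptual step is the identity in the first paragraph, which is what makes the constant $|1 + a(\eta)\gamma'(0)|$ emerge cleanly from the Taylor remainder. The main technical delicacy is aligning the $\mathcal{O}(\eta^{k+1})$ concentration of $c_\eta\gamma^{1/\eta}$ (via the exponential majorant) with the $\mathcal{O}(\eta^{-1})$ blow-up of $c_\eta$ to net an $\mathcal{O}(\eta)$ bound; the exponential majorant is the simplest route. A minor subtlety is that $\gamma'$ is only a Radon measure under $\sBV$, but on $(0,\hat\delta)$ one has $\gamma \in \sW^{2,\infty}$ so $\gamma'$ is a genuine $\sL^\infty$ function, while on $(\hat\delta,\infty)$ only the total-variation norm is needed.
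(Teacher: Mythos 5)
Your proposal is correct and follows essentially the same route as the paper's proof: the same factorization $\gamma_\eta + \eta a(\eta)\gamma_\eta' = c_\eta\gamma^{1/\eta-1}(\gamma + a(\eta)\gamma')$, the same split of $\R_{>0}$ at a small $\hat\delta$, the same Taylor/linear-bound mechanism near zero to isolate $|1+a(\eta)\gamma'(0)|$, the same $\gamma(\hat\delta)^{1/\eta-1}$ decay on the far region, and the same appeal to $c_\eta=\mathcal O(\eta^{-1})$ from \cref{prop:bounds_cgamma}. The only (harmless) deviations are computational conveniences --- the exponential majorant $e^{(1/\eta-1)\beta x}$ with Gamma integrals in place of the paper's explicit antiderivatives of $(1+\kappa x)^{1/\eta-1}x^k$, and the normalization trick $\int_0^\infty c_\eta\gamma^{1/\eta}=1$ to produce $\mD_2=\gamma(\hat\delta)^{-1}$.
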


\begin{proof}
We estimate as follows:
\begin{align}    &\tfrac{1}{c_\eta}\int_{0}^\infty \big|\gamma_\eta(x) + \eta a(\eta)\gamma_{\eta}'(x)\big| \dd x=  \int_{0}^\infty \Big|\gamma(x)^{\frac{1}{\eta}} + a(\eta)\gamma(x)^{\frac{1}{\eta}-1}\gamma'(x)\Big| \dd x \notag\\
    \intertext{using \cref{ass:gamma}, Item 2 to split the integral from \((0,\hat{\delta})\) with \(\hat{\delta}\in(0,\delta]\) small enough}
&\leq  \int_{0}^{\hat \delta} \gamma(x)^{\frac{1}{\eta}-1}\big|\gamma(x) + a(\eta) \gamma'(x)\big| \dd x  + \int_{\hat \delta}^\infty \gamma(x)^{\frac{1}{\eta}-1}\big|\gamma(x) + a(\eta) \gamma'(x)\big| \dd x\notag\\
\intertext{using the fundamental theorem of differentiation, integration on the mapping \(x \mapsto \gamma(x)+\alpha(\eta)\gamma'(x)\) for the first summand, and \cref{ass:gamma} Item 4 for the second integral}
 &\leq \int_{0}^{\hat \delta}\gamma(x)^{\frac{1}{\eta}-1}\bigg|\gamma(0)+a(\eta)\gamma'(0)  + \int_0^x \gamma'(s) + a(\eta)\gamma''(s) \dd s \bigg| \dd x\notag\\
 &\quad+  \big\|\gamma + a(\eta)\gamma'\big\|_{\sL^1((0,\infty))} \gamma(\hat \delta)^{\frac{1-\eta}{\eta}}\notag
 \intertext{using the triangular inequality and in some cases the estimate \(\gamma\leqq \gamma(0)=1\) (\cref{ass:gamma}, Item 4}
 &\leq \underbrace{|\gamma(0)+a(\eta)\gamma'(0)|\vphantom{\bigg|}\int_{0}^{\hat \delta}\!\!\!\gamma(x)^{\frac{1-\eta}{\eta}} \dd x}_{\coloneqq A}+ \underbrace{\vphantom{\bigg|}\|\gamma' + a(\eta)\gamma''\|_{\sL^\infty((0,\hat \delta)}\int_{0}^{\hat \delta} \!\!\! x\gamma(x)^{\frac{1-\eta}{\eta}} \dd x}_{\coloneqq B} \notag\\
 &\quad+  \underbrace{\vphantom{\bigg|}\big\|\gamma + a(\eta)\gamma'\big\|_{\sL^1(\R_{>0})}  \gamma(\hat \delta)^{\frac{1-\eta}{\eta}}}_{\coloneqq C}\label{eq:defi_terms}
\end{align}
Using \cref{eq:linear_bounds_gamma}, we obtain particularly for $\hat \delta \coloneqq \min\big\{-\gamma'(0) \|\gamma''\|_{\sL^\infty((0,\delta))}^{-1}\ ,\ \delta\big\}$ and $\kappa \coloneqq \frac{1}{2}\gamma'(0) < 0$ as upper bound on \(\gamma\) locally around zero
\begin{align*}
 \gamma(x) \leq  1+\kappa x,&& \forall x\in (0,\hat \delta),
\end{align*}
a linear upper bound with negative slope.
We estimate three terms defined in \cref{eq:defi_terms} as follows: 
\begin{align*}
    A &\leq \big|\gamma(0)+a(\eta)\gamma'(0)\big|\int_{0}^{\hat \delta}\big(1+ \kappa x\big)^{\frac{1}{\eta}-1} \dd x 
=  \eta \big|1+a(\eta)\gamma'(0)\big| \tfrac{(1+ \kappa \hat \delta )^{\frac{1}{\eta}}-1}{\kappa} \\
   B& \leq \|\gamma' + a(\eta) \gamma''\|_{\sL^\infty((0,\hat \delta))} \int_0^{\hat \delta} (1+\kappa x)^{\frac{1}{\eta}-1}x \dd x \\
   &=\eta^2  \|\gamma' + a(\eta) \gamma''\|_{\sL^\infty((0,\hat \delta))}\tfrac{1-(1 + \kappa \hat \delta )^{\frac{1}{\eta}} (1 -   \hat \delta \frac{\kappa}{\eta} )}{\kappa^2  (1+\eta)} \\
   C & \leq   \big\|\gamma + a(\eta)\gamma'\big\|_{\sL^1((0,\infty))} (1+\kappa \hat \delta)^{\frac{1-\eta}{\eta}} \leq  \big(\|\gamma\|_{\sL^1(\R_{>0})}+a(\eta) |\gamma|_{\sTV(\R_{>0})}\big)(1+\kappa \hat \delta)^{\frac{1-\eta}{\eta}}.
\end{align*}
Combining the previously obtained estimates yields
\begin{align*}
\Big\|\gamma_\eta + \eta a(\eta)\gamma_\eta'\Big\|_{\sL^1(\R_{>0})} 
&\leq  c_\eta (A+B+C) \\
&= c_\eta\eta \big|1+a(\eta)\gamma'(0)\big| \tfrac{(1+ \kappa \hat \delta )^{\frac{1}{\eta}}-1}{\kappa}  \\
    & \quad + c_\eta\eta^2 \bigg( \|\gamma' + a(\eta) \gamma''\|_{\sL^\infty((0,\hat \delta))}\tfrac{1-(1 + \kappa \hat \delta )^{\frac{1}{\eta}} (1 -   \hat \delta \frac{\kappa}{\eta} )}{\kappa^2  (1+\eta)} \\
    &\qquad\qquad \qquad 
    + \tfrac{1}{\eta^2}\big(\|\gamma\|_{\sL^1(\R_{>0})}+a(\eta) |\gamma|_{\sTV(\R_{>0})}\big)(1+\kappa \hat \delta)^{\frac{1-\eta}{\eta}} \bigg)
\end{align*}
together with the bounds on $c_\eta$, i.e., $c_\eta \in \mathcal O(\eta^{-1})$, in \cref{prop:bounds_cgamma} and making use of $1+\kappa \hat \delta< 1$, the claimed estimate holds with
\begin{align*}
    \mathcal D_1 &\coloneqq -2\gamma'(0)\Big( \big(\|\gamma'\|_{\sL^\infty((0,\hat \delta))} + \|a\|_{\sL^\infty((0,1))} \|\gamma''\|_{\sL^\infty((0,\hat \delta))}\big)\tfrac{1}{\kappa^2} + 1\Big)  \quad \text{and} \quad \mathcal D_2 \coloneqq 2\tfrac{\gamma'(0)}{\kappa}.
\end{align*}
\end{proof}

 \begin{remark}[The exponential kernel]
In the special case of the exponential kernels, i.e.\ $\gamma \equiv \exp(-\cdot)$ we obtain the following identity: 
\begin{align}
    \gamma_\eta \equiv -\eta \gamma_\eta'
\end{align}
and thus the estimate in \cref{prop:prop_gamma_deriv} is satisfied with $\mathcal D_1 = 0$ and $\mathcal D_2 = 0$. This is in line with the singular limit for the exponential case \cite{coclite2022general}. 
 \end{remark}

\section{Weakened maximum principle}

The assumptions on the kernel do not imply monotonicity on its support and, consequently, the maximum principles in recent literature \cite{blandin2016well,scialanga,pflug,coclite2022existence} are not applicable to extending the solution onto the full time horizon \([0,T]\). However, we can show the following weakened maximum principle which will also serve us to extend the weak solution from a small time horizon to an arbitrary large time horizon. For an illustration, we refer the reader to \cref{fig:max_q}.

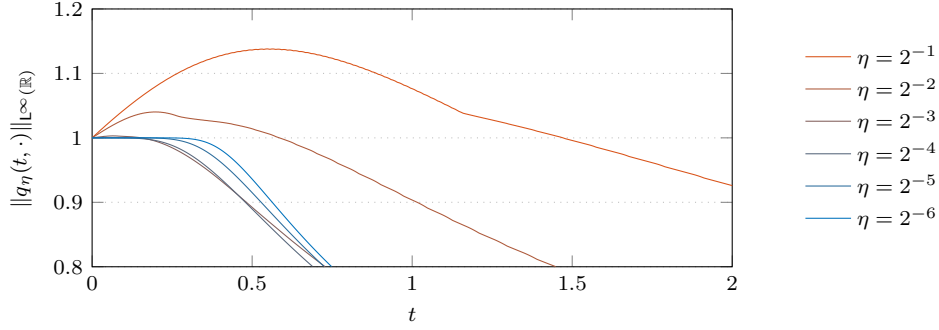
\begin{figure}
    \centering
    \pgfplotstableread{data/maximum_principle1.txt}{\maxprinciple}
\begin{tikzpicture}
\begin{axis}[ymajorgrids,
grid style=dotted,height=5cm,width=10cm,
          xmin=0,ymin=0.8,
          xmax=2,ymax=1.2,
          restrict y to domain=0.8:1.2,
          restrict x to domain=0:2,
          xtick={0.0,0.5,1.0,1.5,2.0},
          ytick={0.8,0.9,1.0,1.1,1.2},xlabel={$t$},ylabel={\(\|q_\eta(t,\cdot)\|_{\sL^\infty(\R)}\)},
          legend style={draw=none,fill=none,font=\plotfont,at={(1.1,0.9)},anchor=north west},legend cell align={left},tick label style={font=\plotfont}, label style={font=\plotfont}]

\addplot[col1!0!col2] table [x = {t}, y = {eta1}] {\maxprinciple};
\addplot[col1!20!col2] table [x = {t}, y = {eta2}] {\maxprinciple};
\addplot[col1!40!col2] table [x = {t}, y = {eta3}] {\maxprinciple};
\addplot[col1!60!col2] table [x = {t}, y = {eta4}] {\maxprinciple};
\addplot[col1!80!col2] table [x = {t}, y = {eta5}] {\maxprinciple};
\addplot[col1!100!col2] table [x = {t}, y = {eta6}] {\maxprinciple};

\legend{$\eta = 2^{-1}$,$\eta = 2^{-2}$,$\eta = 2^{-3}$,$\eta = 2^{-4}$,$\eta = 2^{-5}$,$\eta = 2^{-6}$}
\end{axis}
\end{tikzpicture}
    \caption{Visualization of the weakened maximum principle as proven in \cref{theo:max}, showing an increase of \(t \mapsto \|q_\eta(t,\cdot)\|_{\sL^\infty(\R)}\) over time $t$ which vanishes for \(\eta \rightarrow 0\). The initial datum \(q_0 \equiv \chi_{(-1,-0.5)} + \chi_{(1,1.5)}\), \(\gamma \equiv (1-2 \cdot) \chi_{(0,0.5)} + \tfrac{1}{2} \chi_{(0.5,2)}\) and \(V \equiv 1 - \cdot\) are chosen s.t.\ the ``classical'' maximum principle is violated.}
    \label{fig:max_q}
\end{figure}

\begin{theorem}[Weakened maximum principle] \label{theo:max}
For $\eta \in \R_{>0}$ let $q_\eta$ be the unique weak solution to \cref{defi:nonlocal_conservation_law} as stated in \cref{lem:existence_uniqueness} until a given time \(T\in\R_{>0}\). Then, for \(\kappa\in \R_{>0}\), there exists \( \eta_{\kappa,T} \in \R_{>0}\) s.t.
\[ 
\forall (t,\eta) \in (0,T)\times \big(0,\eta_{\kappa,T}\big)\ : \quad \|q_\eta(t,\cdot)\|_{\sL^\infty(\R)} \leq (1+\kappa) \|q_0\|_{\sL^\infty(\R)}.
\]
\end{theorem}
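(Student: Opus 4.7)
The plan is to track the $\sL^\infty$-norm of $q_\eta$ along characteristics and exploit the fact that, under \cref{ass:gamma}, the ``non-monotone mass'' of $\gamma_\eta'$ is negligibly small as $\eta\to 0$. By \cref{theo:stability}, it suffices to prove the bound for classical solutions coming from smoothed data, uniformly in the mollification parameter, and then pass to the limit in $\sC([0,T];\sL^1(\R))$. For such a classical solution, define the characteristics $t\mapsto\xi(t;x_0)$ by $\xi'(t)=V(W[q_\eta,\gamma_\eta](t,\xi(t)))$, $\xi(0)=x_0$. Expanding $(V(W)q)_x$ and using
\begin{align*}
\partial_x W[q_\eta,\gamma_\eta](t,x) = -\gamma_\eta(0)\,q_\eta(t,x) - \int_x^\infty \gamma_\eta'(y-x)\,q_\eta(t,y)\dd y,
\end{align*}
one obtains along characteristics
\begin{align*}
\tfrac{d}{dt}q_\eta(t,\xi(t)) = V'(W)\,q_\eta\,\Bigl[\gamma_\eta(0)\,q_\eta + \int_{\xi(t)}^\infty \gamma_\eta'(y-\xi(t))\,q_\eta(t,y)\dd y\Bigr].
\end{align*}

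Set $M_\eta(t):=\|q_\eta(t,\cdot)\|_{\sL^\infty(\R)}$. Using $0\leq q_\eta\leq M_\eta$ together with the identity $\|(\gamma_\eta')_-\|_{\sL^1(\R_{>0})}-\|(\gamma_\eta')_+\|_{\sL^1(\R_{>0})}=\gamma_\eta(0)$ (which follows from $\int_0^\infty\gamma_\eta'=-\gamma_\eta(0)$), the bracket above is bounded from below by
\begin{align*}
\gamma_\eta(0)\bigl(q_\eta-M_\eta\bigr) - M_\eta\,\|(\gamma_\eta')_+\|_{\sL^1(\R_{>0})},
\end{align*}
which at a point realising the supremum collapses to $-M_\eta\|(\gamma_\eta')_+\|_{\sL^1(\R_{>0})}$. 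Invoking $V'\leq 0$ and a Danskin-type envelope argument (justified because $M_\eta$ is Lipschitz in $t$ for the smoothed solution) gives the differential inequality
\begin{align*}
\dot M_\eta(t) \;\leq\; \|V'\|_{\sL^\infty}\,\|(\gamma_\eta')_+\|_{\sL^1(\R_{>0})}\, M_\eta(t)^2,\qquad \text{for a.e. } t.
\end{align*}

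The decisive step is controlling $\|(\gamma_\eta')_+\|_{\sL^1(\R_{>0})}$. By \cref{ass:gamma} Item~5, $\gamma_\eta'(x)=\tfrac{c_\eta}{\eta}\gamma(x)^{\frac{1}{\eta}-1}\gamma'(x)<0$ on $(0,\delta)$, so $(\gamma_\eta')_+$ is supported in $[\delta,\infty)$. Combining Items~4, 5 and~6 yields $\gamma(x)\leq\gamma(\delta)<1$ on $\R_{>\delta}$, whence
\begin{align*}
\|(\gamma_\eta')_+\|_{\sL^1(\R_{>0})} \leq \tfrac{c_\eta}{\eta}\,\gamma(\delta)^{\frac{1}{\eta}-1}\,|\gamma|_{\sTV(\R_{>\delta})}.
\end{align*}
Since $c_\eta\in\mathcal O(\eta^{-1})$ by \cref{prop:bounds_cgamma} and $\gamma(\delta)<1$, the right-hand side decays super-exponentially as $\eta\to 0$, so $C_\eta:=\|V'\|_{\sL^\infty}\|(\gamma_\eta')_+\|_{\sL^1(\R_{>0})}\to 0$.

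Finally, integrating $\dot M_\eta \leq C_\eta M_\eta^2$ gives $M_\eta(t)\leq M_\eta(0)/(1-C_\eta t\,M_\eta(0))$ as long as the denominator is positive. Choosing $\eta_{\kappa,T}>0$ small enough that $C_\eta T\,\|q_0\|_{\sL^\infty(\R)}\leq \kappa/(1+\kappa)$ for all $\eta\in(0,\eta_{\kappa,T})$ yields $M_\eta(t)\leq(1+\kappa)\|q_0\|_{\sL^\infty(\R)}$ on $[0,T]$; the resulting uniform $\sL^\infty$-bound, together with the local theory of \cref{lem:existence_uniqueness}, lets one iterate the local existence and so guarantees that the weak solution does extend to $[0,T]$. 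The main obstacle will be the rigorous ``evaluate-at-the-maximum'' step --- precisely the reason for first mollifying via \cref{theo:stability} and only then passing to the limit in the uniform estimate.
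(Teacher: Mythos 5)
Your proposal is correct and rests on the same two pillars as the paper's proof --- evaluate the evolution at (near-)maximum points, and isolate the only obstruction to the classical maximum principle, namely the positive part of $\gamma_\eta'$, which by \cref{ass:gamma} lives beyond $\delta$ where $\gamma\leq\gamma(\delta)<1$ and therefore carries $\sL^1$-mass of order $\tfrac{c_\eta}{\eta}\gamma(\delta)^{\frac{1-\eta}{\eta}}|\gamma|_{\sTV}\to 0$; your constant $C_\eta$ coincides with the paper's. The execution differs in two ways that are worth noting. First, you work along characteristics, so the transport term $V(W)\partial_x q$ disappears identically and you never need the paper's auxiliary estimate $\partial_2 q(t,x_\epsilon)^2\leq 4\epsilon\|\partial_t q\|_{\sL^\infty}\|\partial_2^2q\|_{\sL^\infty}$ at approximate maximizers; this is a genuine simplification, at the price of having to run the Danskin/envelope argument over the non-compact family of characteristics (the residual term $V'(W)\,q\,\gamma_\eta(0)(q-M_\eta)$ is only $O(\epsilon)$ at $\epsilon$-near-maximizers, with a constant proportional to $c_\eta$ --- harmless for fixed $\eta$, but it should be said). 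Second, instead of splitting the integral at $x+\delta$ as the paper does, you use the global decomposition $\gamma_\eta'=(\gamma_\eta')_+-(\gamma_\eta')_-$ together with the mass identity $\|(\gamma_\eta')_-\|_{\sL^1}-\|(\gamma_\eta')_+\|_{\sL^1}=\gamma_\eta(0)$, which packages the cancellation between the $\gamma_\eta(0)q$ term and the monotone part of the kernel more transparently. Two small points to tighten: (i) since $V\in\sW^{1,\infty}_{\loc}$ only, $\|V'\|_{\sL^{\infty}}$ must be read as $\|V'\|_{\sL^{\infty}((0,2\|q_0\|_{\sL^\infty(\R)}))}$ and this requires the short bootstrap the paper spells out (the bound on $W$ is not known a priori); (ii) $\gamma$ is merely $\sBV$ on $\R_{>\delta}$, so $(\gamma_\eta')_+$ is a measure and the chain-rule bound $\gamma^{\frac1\eta-1}(\gamma')_+\leq\gamma(\delta)^{\frac1\eta-1}(\gamma')_+$ at jump points needs the intermediate-value form of the $\sBV$ chain rule --- this works because all intermediate values on $\R_{>\delta}$ are $\leq\gamma(\delta)$, but it deserves a sentence.
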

\begin{proof}
We prove the maximum principle with a sequence of smooth solutions which, according to \cref{theo:stability}, exist on a small enough time horizon. As the bounds we derive on the smoothed solutions are uniform with regard to the smoothing parameter, the bounds hold also for the non-smoothed setting. In the following manipulations, we assume directly smooth solutions, i.e.,\ \(q\in \sC^{\infty}(\OT)\) with \(q\in W^{2,\infty}(\OT)\). 

To prove this, let \(\epsilon \in \R_{\geq 0}\) be given and define \[f(t+\eps) \coloneqq \sup_{y\in \R}q(t+\eps,y)\text{ and } X(\eps) \coloneqq \{x\in \R \cup \{-\infty,\infty\} \ : q(t+\eps,x) = f(\eps)\}\] . Then we obtain for every \(x_\epsilon \in X(\epsilon)\) and \(x\in X(0)\)
\begin{align}
    f(t+\eps) - f(t)
    &= q(t+\epsilon,x_\epsilon)-q(t,x_\epsilon)+\underbrace{q(t,x_\epsilon)-q(t,x)}_{\leq 0}\\
    &\leq q(t+\epsilon,x_\epsilon)-q(t,x_\epsilon) \leq \epsilon \sup_{y\in X(\epsilon)} \partial_t q(t,y) + \epsilon^2 \|\partial_1^2 q \|_{\sL^\infty(\OT)}.
\end{align}
where the letter inequality for the case \(|x_\eps| = \infty\) has to be interpreted as a limit. As \(\partial_1 q_0  \in \sL^{1}(\R) \) we obtain \(\partial_2 q \in \sW^{1,1}(\R)\).
By \(\partial_1 q = -V'(W)\partial_2 W q - V(W)\partial_2 q\) we thus have that \(\lim_{y\rightarrow \infty} \partial_t q(t,y)\) is well defined.

As, by construction, \(f \in W^{1,\infty}((0,T))\) we can 
divide the letter inequality by \(\epsilon\) and, passing to the limit, i.e., letting \(\epsilon \searrow 0\), then yields, for \(t\in[0,T]\),
\begin{align}\label{eq:change_sup_derivative}
    \partial_t f(t) \leq \limsup_{\epsilon \searrow 0} \sup_{y \in X(\epsilon)} \partial_t q(t,y)
\end{align}
where the derivative has to be interpreted in a weak sense. As the second spatial derivative of $q$ is bounded, we can estimate $q$ as follows for all \(y\in \R\):
 \begin{align}
     q(t,x) &\geq q(t,x_\epsilon) + \partial_2 q(t,x_\epsilon) (y-x_\epsilon) - \tfrac{1}{2}\|\partial_2^2 q\|_{\sL^\infty(\OT)}(y-x_\epsilon)^2 \\
     \intertext{as $q(t,x_\epsilon) \geq q(t+\epsilon,x_\epsilon) - \epsilon \|\partial_t q\|_{\sL^\infty(\OT)} \geq \big(q(t,x) - \epsilon \|\partial_t q\|_{\sL^\infty(\OT)}\big) - \epsilon \|\partial_t q\|_{\sL^\infty(\OT)}$ we obtain}
     &\geq q(t,x) - 2\epsilon \|\partial_t q\|_{\sL^\infty(\OT)} + \partial_2 q(t,x_\epsilon) (y-x_\epsilon) - \tfrac{1}{2}\|\partial_2^2 q\|_{\sL^\infty(\OT)}(y-x_\epsilon)^2.
 \end{align}
Reordering the terms, we obtain
 \begin{align}
     0 &\leq  2\epsilon \|\partial_t q\|_{\sL^\infty(\OT)} - \partial_2 q(t,x_\epsilon) (y-x_\epsilon) + \tfrac{1}{2}\|\partial_2^2 q\|_{\sL^\infty(\OT)}(y-x_\epsilon)^2 . 
 \end{align}
 As these estimates are uniform in $y$, we obtain---by making the right hand side of the latter equation minimal---the following:
  \begin{align*}
     0 &\leq  2\epsilon \|\partial_1 q\|_{\sL^\infty(\OT)}  - \tfrac{\partial_2 q(t,x_\epsilon)^2}{\|\partial_2^2 q\|_{\sL^\infty(\OT)}} + \tfrac{1}{2}\|\partial_2^2 q\|_{\sL^\infty(\OT)}\tfrac{\partial_2 q(t,x_\epsilon)^2}{\|\partial_2^2 q\|_{\sL^\infty(\OT)}^2} \\
     &=2\epsilon \|\partial_t q\|_{\sL^\infty(\OT)} - \tfrac{\partial_2 q(t,x_\epsilon)^2}{2\|\partial_2^2 q\|_{\sL^\infty(\OT)}\!}
 \end{align*}
 and consequently
     \begin{equation}\partial_2 q(t,x_\epsilon)^2\leq 4\epsilon \|\partial_t q\|_{\sL^\infty(\OT)}\|\partial_2^2 q\|_{\sL^\infty(\OT)}.\label{eq:d_2_q_eps_2}\end{equation}
Based on these preliminary calculations, we can now estimate $\partial_t q(t,x_\eps)$ for $x_\epsilon\in X(\eps)$. For the sake of simplicity, we write \(W \equiv W[q,\gamma_\eta]\):
    \begin{align}
        q_t(t,x_\epsilon ) &= -V'(W(t,x_\epsilon ))\partial_x W(t,x_\epsilon )q(t,x_\epsilon ) -V(W(t,x_\epsilon ))\partial_{x}q(t,x_\epsilon )\notag
        \intertext{using the definition of  \(W\), i.e.\ \cref{eqn:nonlocalterm}, and the previously derived estimate in \cref{eq:d_2_q_eps_2}}
        &\leq V'(W(t,x_\epsilon ))c_\eta\bigg(q(t,x_\epsilon ) + \tfrac{1}{\eta}\int_{x}^{\infty} \!\!\!\gamma(y-x)^\frac{1-\eta}{\eta} \gamma'(y-x) q(t,y)\dd y \bigg) q(t,x_\epsilon ) \notag\\
        &\qquad + 2V(W(t,x_\epsilon ))\Big(\epsilon \|\partial_t q\|_{\sL^\infty(\OT)}\|\partial_2^2 q\|_{\sL^\infty(\OT)}\Big)^\frac{1}{2}\notag\\
        \intertext{and splitting the integral into two terms with $\delta \in \R_{>0}$ as in \cref{ass:gamma}}
         &= V'(W(t,x_\epsilon ))c_\eta\bigg(q(t,x_\epsilon ) + \tfrac{1}{\eta}\int_{x}^{x+\delta} \gamma(y-x)^\frac{1-\eta}{\eta} \gamma'(y-x) q(t,y)\dd y \notag \\
         &\qquad \qquad\qquad \qquad \qquad + \tfrac{1}{\eta}\int_{x+\delta}^{\infty} \gamma(y-x)^\frac{1-\eta}{\eta} \gamma'(y-x) q(t,y)\dd y \bigg) q(t,x_\epsilon )  \notag\\
        &\qquad + 2V(W(t,x_\epsilon )\Big(\epsilon \|\partial_t q\|_{\sL^\infty(\OT)}\|\partial_2^2 q\|_{\sL^\infty(\OT)}\Big)^\frac{1}{2}\notag\\         
         \intertext{as \(\gamma'|_{(0,\delta)} \leq  0 \) by \cref{ass:gamma}  and \(q(t,x_\epsilon )\leq f(t)\) we obtain}
         &\leq V'(W(t,x_\epsilon ))c_\eta\bigg(q(t,x_\epsilon )  + f(t)\tfrac{1}{\eta}\int_{x}^{x+\delta} \gamma(y-x)^\frac{1-\eta}{\eta} \gamma'(y-x) \dd y\notag  \\   
         &\qquad \qquad  \qquad \qquad \qquad - \tfrac{1}{\eta}\gamma(\delta)^\frac{1-\eta}{\eta} f(t)\int_{x+\delta}^{\infty} |\gamma'(y-x)| \dd y \bigg) q(t,x_\epsilon )  \notag\\    
         &\qquad + 2V(W(t,x_\epsilon ))\Big(\epsilon \|\partial_t q\|_{\sL^\infty(\OT)}\|\partial_2^2 q\|_{\sL^\infty(\OT)}\Big)^\frac{1}{2}\notag\\
           &\leq V'(W(t,x_\epsilon ))c_\eta q(t,x_\epsilon )^2  \notag\\
           &\qquad +  V'(W(t,x_\epsilon ))c_\eta\Big(\gamma(\delta)^\frac{1}{\eta} - \gamma(0)^\frac{1}{\eta} - \tfrac{1}{\eta}\gamma(\delta)^\frac{1-\eta}{\eta}  |\gamma|_{\sTV(\R_{>0})} \Big)f(t)q(t,x_\epsilon)\notag\\
           &\qquad + 2V(W(t,x_\epsilon ))\Big(\epsilon \|\partial_t q\|_{\sL^\infty(\OT)}\|\partial_2^2 q\|_{\sL^\infty(\OT)}\Big)^\frac{1}{2}\notag
           \intertext{as \(f(t)\geq q(t,x_\epsilon) \geq f(t)-2\varepsilon \|\partial_1 q\|_{\sL^\infty(\OT)}\) we can make the r.h.s.\ uniform in \(x_\eps\)}
           &\leq V'(W(t,x_\epsilon ))c_\eta \big(f(t)-2\varepsilon \|\partial_1 q\|_{\sL^\infty(\OT)}\big)^2 \notag \\
           &\qquad +  V'(W(t,x_\epsilon ))c_\eta\Big(\gamma(\delta)^\frac{1}{\eta} - 1 - \tfrac{1}{\eta}\gamma(\delta)^\frac{1-\eta}{\eta}  |\gamma|_{\sTV(\R_{>0})} \Big)f(t)^2\notag\\
           &\qquad + 2V(W(t,x_\epsilon ))\Big(\epsilon \|\partial_t q\|_{\sL^\infty(\OT)}\|\partial_2^2 q\|_{\sL^\infty(\OT)}\Big)^\frac{1}{2}   \notag  \\
           &= V'(W(t,x_\epsilon ))c_\eta \big(-4f(t)\varepsilon \|\partial_1 q\|_{\sL^\infty(\OT)}+4\varepsilon^2 \|\partial_1 q\|_{\sL^\infty(\OT)}^2\big)  \notag\\
           &\qquad +  V'(W(t,x_\epsilon ))c_\eta\Big(\gamma(\delta)^\frac{1}{\eta} - \tfrac{1}{\eta}\gamma(\delta)^\frac{1-\eta}{\eta}  |\gamma|_{\sTV(\R_{>0})} \Big)f(t)^2\notag\\
           &\qquad + 2V(W(t,x_\epsilon ))\Big(\epsilon \|\partial_t q\|_{\sL^\infty(\OT)}\|\partial_2^2 q\|_{\sL^\infty(\OT)}\Big)^\frac{1}{2} \notag
           \intertext{ leaving out the first term in the second line as it is negative---and making the estimates uniform in \(\eps\)}
            &\leq \|V'\|_{\sL^\infty((0,2\|q_0\|_{\sL^\infty(\R))}))} c_\eta \big|-4f(t)\varepsilon \|\partial_1 q\|_{\sL^\infty(\OT)}+4\varepsilon^2 \|\partial_1 q\|_{\sL^\infty(\OT)}^2\big| \label{eq:q_t_estimate_1} \\
           &\qquad + \|V'\|_{\sL^\infty((0,2\|q_0\|_{\sL^\infty(\R))}))}c_\eta\tfrac{1}{\eta}\gamma(\delta)^\frac{1-\eta}{\eta}  |\gamma|_{\sTV(\R_{>0})}f(t)^2\\
           &\qquad + 2\|V\|_{\sL^\infty((0,2\|q_0\|_{\sL^\infty(\R))}))}\Big(\epsilon \|\partial_t q\|_{\sL^\infty(\OT)}\|\partial_2^2 q\|_{\sL^\infty(\OT)}\Big)^\frac{1}{2}\label{eq:q_t_estimate_3}. 
    \end{align}
Plugging the latter estimate for $\partial_t q(t,x_\epsilon)$ into \cref{eq:change_sup_derivative}, we obtain
\begin{align*}
    \partial_t f(t) &\leq \limsup_{\eps\searrow 0}  \bigg(\|V'\|_{\sL^\infty((0,2\|q_0\|_{\sL^\infty(\R))}))} c_\eta \big|-4f(t)\varepsilon \|\partial_1 q\|_{\sL^\infty(\OT)}+4\varepsilon^2 \|\partial_1 q\|_{\sL^\infty(\OT)}^2\big|  \\
           &\qquad\qquad\qquad + \|V'\|_{\sL^\infty((0,2\|q_0\|_{\sL^\infty(\R))}))}c_\eta\tfrac{1}{\eta}\gamma(\delta)^\frac{1-\eta}{\eta}  |\gamma|_{\sTV(\R_{>0})}f(t)^2\\
           &\qquad\qquad\qquad + 2\|V\|_{\sL^\infty((0,2\|q_0\|_{\sL^\infty(\R))}))}\Big(\epsilon \|\partial_t q\|_{\sL^\infty(\OT)}\|\partial_2^2 q\|_{\sL^\infty(\OT)}\Big)^\frac{1}{2} \bigg)\\ 
     &=\|V'\|_{\sL^\infty((0,2\|q_0\|_{\sL^\infty(\R))}))}c_\eta\tfrac{1}{\eta}\gamma(\delta)^\frac{1-\eta}{\eta}  |\gamma|_{\sTV(\R_{>0})}f(t)^2\ \forall t\in[0,T].
\end{align*}
    Thus, we obtain, by the comparison principle,
    \begin{align*}
    f(t) &\leq \tfrac{\|q_{0}\|_{\sL^{\infty}(\R)}}{1-t C_\eta \|q_{0}\|_{\sL^{\infty}(\R)}}, \qquad\qquad \forall t \in \left[0,C_\eta^{-1} \|q_{0}\|_{\sL^{\infty}(\R)}^{-1}\right)
    \intertext{with}
    C_\eta &\: \|V'\|_{\sL^\infty((0,(1+\eps)\|q_0\|_{\sL^\infty(\R)}))}  \tfrac{c_\eta}{\eta} \gamma(\delta)^{\frac{1-\eta}{\eta}} |\gamma|_{\sTV(\R)}.
\end{align*}
Consequently, we obtain, for every \(\kappa \in \R_{>0}\),
\begin{equation}
\forall t\in \Big[0,\tfrac{\kappa}{(1+\kappa)}C_\eta^{-1}\|q_{0}\|^{-1}_{\sL^{\infty}(\R)}\Big]:\ 
\|q(t,\cdot)\|_{\sL^\infty(\R)}\leq (1+\kappa)\|q_{0}\|_{\sL^{\infty}(\R)}\label{eq:maximum_principle}
\end{equation}
as claimed.
Furthermore, as $C_\eta  \rightarrow 0 $ for $\eta \rightarrow 0$, the time interval becomes arbitrary large. This becomes evident as \(c_{\eta}\in \mathcal{O}(\eta^{-1})\) according to \cref{prop:bounds_cgamma} (aggressively forward referencing) and \(0\leq \gamma(\delta)<\gamma(0)=1\). To detail this, it holds (estimating \(c_{\eta}\) from above and below as in \cref{prop:bounds_cgamma}):
\begin{align*}
 \lim_{\eta\rightarrow 0} \tfrac{1}{\eta^{2}}\gamma(\delta)^{\frac{1}{\eta}}=\lim_{\eta\rightarrow 0}\tfrac{\tfrac{1}{\eta^{2}}}{\gamma(\delta)^{-\frac{1}{\eta}}} = \lim_{\eta\rightarrow 0}\tfrac{-\tfrac{2}{\eta^{3}}}{\tfrac{1}{\eta^2}\ln(\gamma(\delta))\gamma(\delta)^{-\frac{1}{\eta}}} = \lim_{\eta\rightarrow 0}\tfrac{\tfrac{2}{\eta^{2}}}{\tfrac{1}{\eta^2}\ln(\gamma(\delta))^2\gamma(\delta)^{-\frac{1}{\eta}}} 
 = 0.   
\end{align*}
The estimate in \crefrange{eq:q_t_estimate_1}{eq:q_t_estimate_3}
\[
\ \forall (t,x)\in\OT:\ |V'(W(t,x))|\leq \|V'\|_{L^{\infty}((0,2\|q_{0}\|_{\sL^{\infty}(\R)}))}
\]
needs to be justified as we do not know a priori that \(W\) and \(q\) are bounded by \(\|2q_{0}\|_{\sL^{\infty}(\R)}\). However, for a small enough time horizon, this is true, and we then obtain the estimate in \cref{eq:maximum_principle} which is, for \(\kappa<1\), stronger and implies in particular that \(\|q(t,\cdot)\|_{\sL^{\infty}(\R)}\leq 2\|q_{0}\|_{\sL^{\infty}(\R)}\). This justifies the previous estimate. A classical extension argument in time concludes the proof.
\end{proof}

\section{Surrogate nonlocal term}\label{sec:TV_uniform}

Following the idea in \cite{coclite2022general}, we aim to express our PDE in terms of a nonlocal quantity. As the exponential kernel satisfies the identity, $\gamma_\eta' \equiv -\eta \gamma'$, we can transform the nonlocal dynamics in \(q_{\eta}\) solely in the new quantity \(W[q,\exp(\tfrac{-\cdot}{\eta})]\). We thus define the following quantities:

\begin{definition}[Surrogate nonlocal term] \label{defi:surrogate}
For \(\nu\in\R_{>0}\) and \(\eta \in \R_{>0}\) we define the surrogate nonlocal term as follows:
    \begin{equation}
E_{\nu,\eta}(t,x)\coloneqq W[q_\eta,\exp(-\tfrac{\cdot}{\nu})](t,x) = \tfrac{1}{\nu}\int_{x}^{\infty}\exp\big(\tfrac{x-y}{\nu}\big)q_\eta(t,y)\dd y, \qquad (t,x)\in\OT,\label{eq:surrogate_nonlocal}
\end{equation}
where \(q_\eta\) is a solution of \cref{theo:existence_uniqueness_local}.
As the special relation \(\nu = -\eta \gamma'(0)^{-1}\) is of relevance for the prove of convergence, we define the following abbreviation:
    \begin{equation}
E_{\eta}(t,x)\coloneqq E_{-\eta \gamma'(0)^{-1},\eta}  = \tfrac{1}{-\eta \gamma'(0)^{-1}}\int_{x}^{\infty}\exp\big(\tfrac{x-y}{-\eta \gamma'(0)^{-1}}\big)q_\eta(t,y)\dd y, \qquad (t,x)\in\OT. \label{eq:surrogate_nonlocal_abbreviation}
\end{equation}
\end{definition}

As the nonlocal term \(W[q,\gamma_{\eta}]\) can then be expressed by \(W[q,\exp(\tfrac{-\cdot}{\eta})]\) as well, we aim to derive uniform \(\sTV\) bounds of the exponential nonlocal term uniformly in \(\eta\). This is made precise in the following \cref{lem:surrogate_dynamics_E}.

\begin{lemma}[The surrogate dynamics for the exponential nonlocality \(E\)]\label{lem:surrogate_dynamics_E}
Given the dynamics in \cref{defi:nonlocal_conservation_law} with solution \(q_\eta\in \sC\big([0,T];\sL^{1}(\R)\big)\), define for \(\nu\in\R_{>0}\)
 \(E_{\nu,\eta}\in \sW^{1,\infty}(\OT)\) as in \cref{defi:surrogate} and the following dynamics hold almost everywhere in \((t,x)\in\OT\):
\begin{align*}
\partial_t E_{\nu,\eta}(t,x)&=-V(W[q_\eta,\gamma_\eta](t,x))\partial_xE_{\nu,\eta}(t,x)+\tfrac{1}{\nu} V(W[q_\eta,\gamma_\eta](t,x))E_{\nu,\eta}(t,x) \\&\quad -\tfrac{1}{\nu^{2}}\!\!\int_{x}^{\infty}\!\!\!\!\e^{\frac{x-y}{\nu}}V(W[q_\eta,\gamma_\eta](t,y))\big(E_{\nu,\eta}(t,y)-\nu \partial_y E_{\nu,\eta}(t,y)\big)\dd y,\\
E_{\nu,\eta}(0,x)&=\tfrac{1}{\nu}\int_{x}^{\infty}\exp\big(\tfrac{x-y}{\nu}\big)q_{0}(y)\dd y,\ x\in\R
\end{align*}
with \(W\) being the nonlocal operator as in \cref{eqn:NBL}. 
\end{lemma}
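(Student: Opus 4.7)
The plan is to differentiate $E_{\nu,\eta}$ in time, substitute the nonlocal conservation law for $\partial_t q_\eta$, integrate by parts in $y$, and finally re-express the pointwise value $q_\eta$ in terms of $E_{\nu,\eta}$ and $\partial_x E_{\nu,\eta}$ via a simple inversion of the exponential convolution.

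First I would check the regularity: since the exponential kernel $\nu^{-1}e^{\cdot/\nu}$ is $C^\infty$ and $q_\eta \in \sC([0,T];\sL^1_{\loc}(\R))\cap \sL^\infty((0,T);\sBV(\R))$, a standard convolution argument gives $E_{\nu,\eta} \in \sW^{1,\infty}(\OT)$ together with the pointwise identity
\begin{equation*}
\partial_x E_{\nu,\eta}(t,x) = -\tfrac{1}{\nu} q_\eta(t,x) + \tfrac{1}{\nu}E_{\nu,\eta}(t,x),
\end{equation*}
which rearranges to the key \emph{inversion formula} $q_\eta = E_{\nu,\eta} - \nu \partial_x E_{\nu,\eta}$ (a.e.\ in $\OT$). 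To justify the manipulations below rigorously I would appeal to \cref{theo:stability}: work first with the smooth approximations $q_{\eta,\eps}$ (where the PDE holds classically and all boundary terms at $y=\infty$ vanish thanks to the exponential weight and the bound $\|q_\eta\|_{\sL^\infty}$ from \cref{theo:max}), and then pass $\eps \to 0$ in $\sC([0,T];\sL^1(\R))$.

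Next I would compute, for the smooth approximation,
\begin{equation*}
\partial_t E_{\nu,\eta}(t,x) = \tfrac{1}{\nu}\int_{x}^{\infty}\ee^{\frac{x-y}{\nu}}\partial_t q_\eta(t,y)\dd y
= -\tfrac{1}{\nu}\int_{x}^{\infty}\ee^{\frac{x-y}{\nu}}\partial_y\bigl(V(W[q_\eta,\gamma_\eta](t,y))q_\eta(t,y)\bigr)\dd y,
\end{equation*}
using \cref{eqn:NBL}. A single integration by parts in $y$, together with the decay at infinity, gives
\begin{equation*}
\partial_t E_{\nu,\eta}(t,x) = \tfrac{1}{\nu}V(W[q_\eta,\gamma_\eta](t,x))q_\eta(t,x) - \tfrac{1}{\nu^{2}}\int_{x}^{\infty}\ee^{\frac{x-y}{\nu}}V(W[q_\eta,\gamma_\eta](t,y))q_\eta(t,y)\dd y.
\end{equation*}

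Now I would substitute the inversion formula $q_\eta = E_{\nu,\eta} - \nu \partial_x E_{\nu,\eta}$ into both occurrences of $q_\eta$ on the right-hand side. The pointwise term at $x$ produces exactly $\tfrac{1}{\nu}V(W)E_{\nu,\eta} - V(W)\partial_x E_{\nu,\eta}$, while the integrand becomes $V(W(t,y))\bigl(E_{\nu,\eta}(t,y) - \nu \partial_y E_{\nu,\eta}(t,y)\bigr)$, matching the claimed identity term by term. The initial condition is immediate from \cref{eq:surrogate_nonlocal} evaluated at $t=0$ with $q_\eta(0,\cdot)=q_0$. The main (minor) obstacle is the passage from the smooth approximation back to the general $q_\eta$: this requires checking that the integrand $\ee^{(x-\cdot)/\nu} V(W[q_{\eta,\eps},\gamma_\eta])(E_{\nu,\eta,\eps} - \nu \partial_y E_{\nu,\eta,\eps})$ converges in $\sL^1_{\loc}$, which follows from the $\sL^\infty$ bound on $q_{\eta,\eps}$ (uniform in $\eps$ by \cref{theo:max}), the Lipschitz bound on $V$ on the relevant range, and the exponential decay in $y$, allowing dominated convergence.
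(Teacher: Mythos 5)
Your proposal is correct and reaches the paper's key intermediate identity
\(\partial_t E_{\nu,\eta}(t,x)=\tfrac{1}{\nu}V(W(t,x))q_\eta(t,x)-\tfrac{1}{\nu^{2}}\int_{x}^{\infty}\e^{\frac{x-y}{\nu}}V(W(t,y))q_\eta(t,y)\dd y\),
after which both arguments conclude identically by the inversion \(q_\eta\equiv E_{\nu,\eta}-\nu\,\partial_x E_{\nu,\eta}\). The route there is genuinely different, though: the paper works in Lagrangian coordinates, writing \(E_{\nu,\eta}\) through the characteristic flow \(\xi\) (via the representation formula of weak solutions along characteristics from the cited literature), differentiating the flow in time and using \(\partial_t\xi(t,x;0)=-V(W(t,x))\partial_2\xi(t,x;0)\), then changing variables back; you instead differentiate under the integral sign, substitute the conservation law for \(\partial_t q_\eta\) in Eulerian form, and integrate by parts once against the exponential weight. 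Your version is more elementary and transparent, but it requires giving meaning to \(\partial_y\big(V(W)q_\eta\big)\) for a solution that is a priori only \(\sBV\) in space — you correctly flag that this is handled by first working with the smooth approximations of \cref{theo:stability} and passing to the limit by dominated convergence (the boundary term at \(y=\infty\) vanishing by the exponential decay and the \(\sL^\infty\) bound). The characteristics route buys the ability to start directly from the weak-solution representation without invoking the strong form of the PDE, though in practice the paper also leans on the same smoothing when it later differentiates through these identities, so the two approaches are on equal rigor footing. Both your inversion formula \(\partial_x E_{\nu,\eta}=\tfrac{1}{\nu}(E_{\nu,\eta}-q_\eta)\) and the sign bookkeeping in the integration by parts check out.
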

\begin{proof}
For the sake of simplicity, we use the abbreviations $E \coloneqq E_{\nu,\eta},W \coloneqq W[q_\eta,\gamma_\eta]$ and $q \coloneqq q_\eta$. With the characteristics $\xi$ solving
\begin{align*}
    \xi(t,x;\tau) = x + \int_t^\tau V(W(s,\xi(t,x;s))) \dd s, && \forall (t,x,\tau) \in \OT \times [0,T]
\end{align*}
we obtain---as shown in \cite[Lemma 2.1]{pflug}---the following:
\begin{align*}
E(t,x)&=\tfrac{1}{\nu}\int_{x}^{\infty}\exp\big(\tfrac{x-y}{\nu}\big)q_0(\xi(t,y;0)) \partial_2 \xi(t,y;0)\dd y, && (t,x)\in\OT \\
&= \tfrac{1}{\nu}\int_{\xi(t,x;0)}^{\infty}\exp\big(\tfrac{x-\xi(0,z;t)}{\nu}\big)q_0(z) \dd z, && (t,x)\in\OT .
\end{align*}
For every \((t,x)\in\OT\), we thus obtain  
\begin{align*}
    \partial_{t} E(t,x) &=-\tfrac{1}{\nu}q_0(\xi(t,x;0)) \partial_t \xi(t,x;0) \\
    &\qquad + \tfrac{1}{\nu^2}\int_{\xi([t,x;0)}^{\infty}\exp\big(\tfrac{x-\xi(0,z;t)}{\nu}\big)q_0(z) \partial_t \xi(0,z;t) \dd z \\
    \intertext{by $\partial_t \xi(t,x;0) = - V(W(t,x)] \partial_2 \xi(t,x;0)$ as shown in \cite[Lemma 2.6, Item 1]{pflug} we obtain}
    &=\tfrac{1}{\nu} V(W(t,x))q(t,x) \\
    &\qquad -\tfrac{1}{\nu^{2}}\int_{\xi(t,x;0)}^{\infty}\exp(\tfrac{x-\xi(0,z;t)}{\nu})q_0(z) V(W(t,\xi(0,z;t)))\dd y\\
    \intertext{substituting $x = \xi(0,z;t)$ in the integral leads to}
    &=\tfrac{1}{\nu} V(W(t,x))q(t,x)-\tfrac{1}{\nu^{2}}\int_{x}^{\infty}\exp\big(\tfrac{x-y}{\nu}\big)V(W(t,x))q(t,y))\dd y
\end{align*}
which is---by using $q \equiv E - \nu \partial_2 E$---the claimed identity.
\end{proof}

\begin{remark}[Interpretation of surrogate nonlocal impact $\nu$]
The \(\nu \in \R_{>0}\) in the surrogate function \(E_{\nu}\) denotes the nonlocal scaling with the introduced exponential kernel. 

In the following convergence analysis, the choice \(\nu = -\eta \gamma'(0)\) turns out to be crucial in \cref{lem:W_E_L_infty_estimate} and \cref{theo:TV}, so, although we suggested a general other nonlocal scaling in \(\nu \), it needs to be of the order of \(\eta\in\R_{>0}\).
\end{remark}
For the entropy admissibility in the limit for $\eta \rightarrow 0$ proven in \cref{sec:convergence}, we further need a comparison between the nonlocal term $W_\eta$ and its surrogate $E_\eta$ which is specified in the following:

\begin{lemma}[\(\sL^{\infty}\) estimate between the nonlocal term \(W_{\eta}\) and its surrogate \(E_{\eta}\)]\label{lem:W_E_L_infty_estimate}
Let \(W_\eta\) and \(E_{\eta}\) be defined as in \cref{eqn:nonlocalterm} and \cref{eq:surrogate_nonlocal}, respectively, for \(\eta \in (0,0.25)\) small enough s.t. \cref{theo:max} holds for \(\epsilon \leq 1\). Then the two nonlocal terms satisfy the following \(\sL^\infty\)-estimate:
    \begin{align*}
    \|W_\eta(t,\cdot)-E_\eta(t,\cdot)\|_{\sL^\infty(\R)} \leq \eta \big(2\mathcal D_1   + 4   \big)\|q_0\|_{\sL^\infty(\R)} \quad \forall t \in [0,T].
\end{align*}
\end{lemma}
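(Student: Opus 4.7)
The plan is to read \(E_\eta\) as the nonlocal term built from an exponential kernel: setting \(\tilde\gamma_\eta(z)\coloneqq -\tfrac{\gamma'(0)}{\eta}\exp\bigl(\tfrac{\gamma'(0) z}{\eta}\bigr)\), \(z\in\R_{\geq 0}\), one has \(E_\eta(t,x)=\int_0^\infty\tilde\gamma_\eta(z)q_\eta(t,x+z)\dd z\), and both \(\gamma_\eta\) and \(\tilde\gamma_\eta\) are probability densities on \(\R_{>0}\). Translating the integration variable then yields the pointwise bound
\[
\bigl|W_\eta(t,x)-E_\eta(t,x)\bigr| \leq \bigl\|\gamma_\eta-\tilde\gamma_\eta\bigr\|_{\sL^1(\R_{>0})}\bigl\|q_\eta(t,\cdot)\bigr\|_{\sL^\infty(\R)},
\]
and for the last factor I would invoke the weakened maximum principle \cref{theo:max} with \(\kappa=1\) (valid for \(\eta\) small enough under the standing assumption), giving \(\|q_\eta(t,\cdot)\|_{\sL^\infty(\R)}\leq 2\|q_0\|_{\sL^\infty(\R)}\). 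The proof thus reduces to showing \(\|\gamma_\eta-\tilde\gamma_\eta\|_{\sL^1(\R_{>0})}\leq \eta(\mD_1+c)\) for an explicit absolute constant \(c\).

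The key observation is that \(\tilde\gamma_\eta\) is the \emph{exact} solution of the linear scalar ODE \(y'=\lambda y\) on \(\R_{>0}\) with initial value \(y(0)=-\gamma'(0)/\eta\), where \(\lambda\coloneqq\gamma'(0)/\eta<0\); whereas \(\gamma_\eta\) satisfies the same ODE only up to a small residual. Applying \cref{prop:prop_gamma_deriv} with the constant choice \(a(\eta)\equiv -\gamma'(0)^{-1}>0\) kills the term \(|1+a(\eta)\gamma'(0)|\mD_2=0\) and gives
\[
\bigl\|\gamma_\eta - \tfrac{\eta}{\gamma'(0)}\gamma_\eta'\bigr\|_{\sL^1(\R_{>0})} \leq \eta\mD_1,\qquad\text{i.e.}\qquad \bigl\|r_\eta\bigr\|_{\sL^1(\R_{>0})}\leq|\gamma'(0)|\mD_1,
\]
with \(r_\eta\coloneqq\gamma_\eta'-\lambda\gamma_\eta\). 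The boundary mismatch is also small: \(g(0)\coloneqq\gamma_\eta(0)-\tilde\gamma_\eta(0)=c_\eta+\gamma'(0)/\eta\) is nonnegative and bounded by \(-2\gamma'(0)/(1-2\eta)\leq -4\gamma'(0)\) for \(\eta\leq\tfrac14\), thanks to \cref{prop:bounds_cgamma}.

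With \(g\coloneqq\gamma_\eta-\tilde\gamma_\eta\), the relation \(g'=\lambda g + r_\eta\) together with \(g(+\infty)=0\) (both kernels being integrable) yields, by variation of parameters,
\[
g(z) = g(0)\mathrm{e}^{\lambda z} + \int_0^z \mathrm{e}^{\lambda(z-w)}\,r_\eta(w)\dd w,\qquad z\in\R_{>0}.
\]
Integrating over \(\R_{>0}\) and applying Fubini to the convolution gives
\[
\|g\|_{\sL^1(\R_{>0})} \leq \tfrac{|g(0)|+\|r_\eta\|_{\sL^1(\R_{>0})}}{|\lambda|} \leq \tfrac{\eta}{|\gamma'(0)|}\bigl(4|\gamma'(0)|+|\gamma'(0)|\mD_1\bigr) = \eta(\mD_1+4),
\]
so that, combined with the first paragraph, \(\|W_\eta-E_\eta\|_{\sL^\infty(\R)}\leq 2\eta(\mD_1+4)\|q_0\|_{\sL^\infty(\R)}\), which is the claim up to an inessential numerical constant.

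The main subtlety I anticipate is the regularity of \(\gamma_\eta\): outside \((0,\delta)\) the kernel is only \(\sBV\), so \(\gamma_\eta'\) is a priori a Radon measure and the variation-of-parameters identity must be read in the BV/Stieltjes sense, with jumps of \(\gamma_\eta\) contributing to \(r_\eta\). I would handle this by a standard mollification of \(\gamma\) outside a neighbourhood of \(0\): all constants entering the above estimate depend only on \(\gamma'(0)\), \(\|\gamma''\|_{\sL^\infty((0,\delta))}\), \(|\gamma|_{\sTV(\R_{>0})}\), \(\|\gamma\|_{\sL^1(\R_{>0})}\) and \(\gamma(\delta)\), all of which are stable under such mollification; one then passes to the limit via dominated convergence, using \cref{theo:stability} to control the effect on \(q_\eta\), and obtains the claimed bound for the original kernel.
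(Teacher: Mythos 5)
Your argument is correct and reaches the stated estimate (up to the numerical constant), but it is organized differently from the paper's proof. The paper substitutes \(q\equiv E_\eta-\nu\partial_x E_\eta\) into the integral defining \(W_\eta\), integrates by parts to move the derivative onto the kernel, and ends up with the two terms \(\|\gamma_\eta+\eta a(\eta)\gamma_\eta'\|_{\sL^1(\R_{>0})}\cdot\|E_\eta\|_{\sL^\infty}\) and \(|1+c_\eta\eta/\gamma'(0)|\cdot\|E_\eta\|_{\sL^\infty}\), which it then controls by \cref{prop:prop_gamma_deriv} (with \(a\equiv-\gamma'(0)^{-1}\)) and \cref{prop:bounds_cgamma}, exactly as you do. You instead isolate the purely kernel-level statement \(\|\gamma_\eta-\tilde\gamma_\eta\|_{\sL^1(\R_{>0})}\leq\eta(\mD_1+4)\) and prove it by a variation-of-parameters/Duhamel argument for \(y'=\lambda y\), the boundary mismatch \(g(0)=c_\eta+\gamma'(0)/\eta\) playing the role of the paper's \(|1+c_\eta\eta/\gamma'(0)|\) term and the residual \(r_\eta\) the role of \(\gamma_\eta+\eta a(\eta)\gamma_\eta'\). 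The two computations are adjoint to one another (your kernel comparison tested against \(q_\eta\) is the paper's integration by parts), but your formulation has the advantage of producing a clean, reusable intermediate fact about the kernels alone, independent of the solution; both routes consume the same two propositions with the same choice of \(a\).

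Two minor points. First, your final constant is \(\eta(2\mD_1+8)\|q_0\|_{\sL^\infty(\R)}\) rather than \(\eta(2\mD_1+4)\|q_0\|_{\sL^\infty(\R)}\); note that the paper's own proof incurs the same factor (its second term is \(\tfrac{2\eta}{1-2\eta}\cdot2\|q_0\|_{\sL^\infty(\R)}\leq 8\eta\|q_0\|_{\sL^\infty(\R)}\) for \(\eta\) near \(\tfrac14\)), and the constant is only ever used as an \(\mathcal O(\eta)\) bound downstream, so this is harmless. Second, your appeal to \cref{theo:stability} in the mollification step is misplaced: that theorem concerns smoothing of \(q_0\) and \(V\), not of the kernel, and in any case the estimate \(\|\gamma_\eta-\tilde\gamma_\eta\|_{\sL^1(\R_{>0})}\leq\eta(\mD_1+4)\) involves only the kernel, so \(q_\eta\) never needs to be touched. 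More simply, the Duhamel identity holds directly in the \(\sBV\)/Stieltjes sense with \(D\gamma_\eta\) a finite measure and \(r_\eta\coloneqq D\gamma_\eta-\lambda\gamma_\eta\,\mathcal L^1\); since \cref{prop:prop_gamma_deriv} must itself be read in this measure sense for a \(\sBV\) kernel (its proof bounds the tail by \(|\gamma|_{\sTV(\R_{>0})}\)), the mollification detour can be dropped entirely.
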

\begin{proof}
Let \((t,x)\in\OT\) be given and---recalling the identity for \(W_{\eta}\) in \cref{eqn:nonlocalterm} and the identity for the surrogate kernel \(E_{\eta}\) in \cref{eq:surrogate_nonlocal}, i.e.,
\begin{equation}
-\tfrac{\eta}{\gamma'(0)}\partial_{x}E_{\eta}\equiv E_{\nu}-q\ \implies \ q\equiv E_{\eta}+\nu\partial_{x}E_{\nu}\label{eq:E_E_x_q}
\end{equation}
where we use for the sake of simplicity the abbreviation \(\nu\coloneqq -\eta \gamma'(0)^{-1}\)
---estimate as follows:
    \begin{align*}
&|W_\eta(t,x)-E_\eta(t,x)|\\
&= \bigg|\int_{x}^\infty \!\!\gamma_\eta(x-y) \big( E_\eta(t,y) - \nu \partial_y E_\eta(t,y) \big)\dd y  - E_\eta(t,x) \bigg|\\
&= \bigg|\int_{x}^\infty \!\!\gamma_\eta(x-y) E_\eta(t,y) \dd y  - \int_{x}^\infty \!\!\gamma_\eta(x-y) \nu \partial_y E_\eta(t,y) \dd y  - E_\eta(t,x) \bigg|
\intertext{integration by parts in the second term}
&= \bigg|\int_{x}^\infty \!\!\gamma_\eta(x-y) E_\eta(t,y) \dd y + c_\eta \nu E_\eta(t,x) - \!\! \int_{x}^\infty \!\!\!\nu \gamma_\eta'(x-y)  E_\eta(t,y) \dd y  - E_\eta(t,x) \bigg|\\
&\leq \int_{x}^\infty  \big| \gamma_\eta(x-y) - \nu \gamma_\eta'(x-y)\big|  E_\eta(t,y) \dd y  + |1-c_\eta \nu|E_\eta(t,x) \\
\intertext{as \(\nu = -\eta \gamma'(0)^{-1}\) and \(0\leq E_\eta \leq 2 \|q_0\|_{\sL^\infty(\R)}\) as derived in \cref{theo:max} for \(\eta \in \R_{>0}\) small enough}
&\leq \big\| \gamma_\eta + \tfrac{ \eta }{\gamma'(0)} \gamma_\eta'\big\|_{\sL^1(\R_{>0})}  \cdot 2\|q_0\|_{\sL^\infty(\R)}  + \big|1+\tfrac{c_\eta \eta}{\gamma'(0)}\big|  \cdot 2\|q_0\|_{\sL^\infty(\R)} \\
\intertext{by the bounds on \(c_\gamma\) as derived in \cref{prop:bounds_cgamma} for the second term we obtain for \(\eta \leq \frac{1}{4}\)}
&\leq \big\| \gamma_\eta + \tfrac{\eta}{\gamma'(0)} \gamma_\eta'\big\|_{\sL^1(\R_{>0})}  \cdot 2\|q_0\|_{\sL^\infty(\R)}  + \Big|1+\tfrac{\frac{-\gamma'(0)}{\eta(1-2\eta)} \eta}{\gamma'(0)}\Big|  \cdot 2\|q_0\|_{\sL^\infty(\R)} \\
\intertext{as derived in \cref{prop:prop_gamma_deriv}, there exists \(\mathcal D_1\) s.t.\ we obtain}
&\leq \eta \big(2\mathcal D_1   + 4   \big)\|q_0\|_{\sL^\infty(\R)}.
\end{align*}
As this is uniform in \(x \in \R\), we obtain the claimed result.
\end{proof}
Next, we provide a uniform \(\sTV\) estimate of our surrogate quantity \(E_{\nu}\) when assuming that our kernel \(\gamma\) satisfies \cref{ass:gamma}. This results also in an uniform \(\sTV\) estimate for the nonlocal operator \(W_{\eta}\), as we shall see.
\begin{theorem}[\(\sTV\) estimates for surrogate model in $\eta,\nu$]\label{theo:TV}
Let the assumptions be as in \cref{ass:gamma}; then, there exists \(\mathcal{D}_{2}\in\R_{>0}\) so that one has
\begin{align}
\sup_{\eta\in\R_{>0}}|E_{\eta}(t,\cdot)|_{\sTV(\R)}&\leq |q_{0}|_{\sTV(\R)}\exp\Big(4\mD_2\|V'\|_{\sL^{\infty}((0,E_{\max}))}\|q_0\|_{\sL^\infty(\R)}t\Big),\quad \forall t\in[0,T].\label{eq:TV_non_increasing}
\end{align}
Even more, the nonlocal operator \(W_{\eta}\) is also uniformly \(\sTV\) bounded, i.e., 
\begin{align*}
|W_{\eta}(t,\cdot)|_{\sTV(\R)}
                            &\leq \big(2 +\mathcal{D}_{3}\big)|E_\eta(t,\cdot)|_{\sTV(\R)} \\
                             &\leq \big(2 +\mathcal{D}_{3}\big)|q_{0}|_{\sTV(\R)}\e^{4\mD_2\|V'\|_{\sL^{\infty}((0,E_{\max}))}\|q_0\|_{\sL^\infty(\R)}t},\ \forall t\in[0,T], \eta \in \big(0,\tfrac{1}{4}\big).
\end{align*}
\end{theorem}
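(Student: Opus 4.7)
The proof splits into two parts: the uniform $\sTV$-estimate on $E_\eta$, proved via a Gronwall argument on the surrogate PDE, and the $\sTV$-comparison for $W_\eta$ via an integration-by-parts identity in terms of $E_\eta$.

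At $t=0$, $E_\eta(0,\cdot)$ is the convolution of $q_0$ with the normalized probability density $\tfrac{1}{\nu}e^{x/\nu}\chi_{(-\infty,0)}(x)$ (for $\nu\coloneqq-\eta/\gamma'(0)>0$), so the $\sL^1$-contractivity of convolution yields $|E_\eta(0,\cdot)|_{\sTV(\R)}\le|q_0|_{\sTV(\R)}$. For the evolution, I start from \cref{lem:surrogate_dynamics_E}, substitute $q_\eta = E_\eta - \nu\partial_x E_\eta$ into the integral source, and integrate by parts, which rewrites the dynamics in non-divergence form as
\[
\partial_t E_\eta + V(W_\eta)\partial_x E_\eta = -\mathcal{E}_\nu\bigl(V'(W_\eta)\,\partial_x W_\eta\cdot E_\eta\bigr),
\]
where $\mathcal{E}_\nu(f)(x)\coloneqq\tfrac{1}{\nu}\int_x^\infty e^{(x-y)/\nu}f(y)\dd y$. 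By \cref{theo:stability} I may work with smoothed solutions and take the limit at the end.

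Differentiating the PDE in $x$, setting $R\coloneqq\partial_x E_\eta$, multiplying by $\sgn(R)$ and integrating over $\R$, the drift contributions $V(W_\eta)\partial_x R$ and $V'(W_\eta)\partial_x W_\eta\cdot R$ cancel after one integration by parts, leaving
\[
\tfrac{d}{dt}\|R(t,\cdot)\|_{\sL^1(\R)} \le \int_\R\bigl|\partial_x\mathcal{E}_\nu\bigl(V'(W_\eta)\partial_x W_\eta\cdot E_\eta\bigr)\bigr|\dd x.
\]
A further integration by parts (of the same type as in the proof of \cref{lem:W_E_L_infty_estimate}) yields the representation
\[
\partial_x W_\eta(t,x) = c_\eta\nu\,R(t,x) + \int_x^\infty\bigl(\gamma_\eta+\nu\gamma_\eta'\bigr)(y-x)\,R(t,y)\dd y,
\]
so that $\partial_x W_\eta$ acts as a bounded linear operator on $R$ in $\sL^1(\R)$ with operator norm at most $|c_\eta\nu|+\|\gamma_\eta+\nu\gamma_\eta'\|_{\sL^1(\R_{>0})}\le 2+\eta\mathcal{D}_1$, by \cref{prop:bounds_cgamma} and by \cref{prop:prop_gamma_deriv} applied with $a(\eta)=-1/\gamma'(0)$, the distinguished choice that annihilates $|1+a(\eta)\gamma'(0)|$. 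Combined with the $\sL^\infty$ bound on $E_\eta$ inherited from \cref{theo:max}, one obtains a bound of the right-hand side by a constant multiple of $\|V'\|_{\sL^\infty}\|q_0\|_{\sL^\infty}\|R\|_{\sL^1(\R)}$ uniformly in $\eta$, and Gronwall's inequality produces the claimed exponential estimate.

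For $|W_\eta(t,\cdot)|_{\sTV(\R)}$, the same integration-by-parts identity underlying \cref{lem:W_E_L_infty_estimate} gives
\[
W_\eta(t,x) = \int_x^\infty\bigl(\gamma_\eta+\nu\gamma_\eta'\bigr)(y-x)\,E_\eta(t,y)\dd y + c_\eta\nu\,E_\eta(t,x).
\]
Differentiating in $x$, applying Fubini, and invoking \cref{prop:prop_gamma_deriv} (giving $\|\gamma_\eta+\nu\gamma_\eta'\|_{\sL^1(\R_{>0})}\le\eta\mathcal{D}_1$) together with \cref{prop:bounds_cgamma} (giving $|c_\eta\nu|\le 2$ for $\eta\in(0,\tfrac14)$) yields $|W_\eta(t,\cdot)|_{\sTV(\R)}\le(\eta\mathcal{D}_1+2)|E_\eta(t,\cdot)|_{\sTV(\R)}$, which is the claim with $\mathcal{D}_3\coloneqq\mathcal{D}_1$. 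The main obstacle is the $\sTV$-estimate on $E_\eta$ itself: naive $\nu^{-1}$ factors arising from $\partial_x\mathcal{E}_\nu$ would destroy the $\eta$-uniformity, and this is only avoided because the scaling $\nu = -\eta/\gamma'(0)$ makes $\|\gamma_\eta+\nu\gamma_\eta'\|_{\sL^1}=\mathcal{O}(\eta)$, so the non-convective part of $\partial_x W_\eta$ is an $\sL^1$-bounded operation on $R$ with $\eta$-independent norm.
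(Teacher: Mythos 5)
Your strategy is the same as the paper's: pass to smooth solutions, write the surrogate dynamics in non\-/divergence form, differentiate, let the transport terms cancel under $\int\sgn(R)\,\cdot\,\dd x$, express $\partial_x W_\eta$ through $R=\partial_x E_\eta$ via the kernel identity, invoke \cref{prop:prop_gamma_deriv} with $a\equiv-\gamma'(0)^{-1}$, and close with Gronwall; the initial bound by convolution contractivity and the final $W_\eta$-estimate also match. However, there is a genuine gap at the decisive quantitative step, the uniform-in-$\eta$ bound on $\tfrac{\dd}{\dd t}\|R\|_{\sL^1(\R)}$. Writing $\mathcal{E}_\nu$ for your exponential averaging operator, one has $\partial_x\mathcal{E}_\nu(f)=\tfrac1\nu\big(\mathcal{E}_\nu(f)-f\big)$, so taking absolute values as you do costs a factor $\tfrac2\nu$; combined with your operator bound $|c_\eta\nu|+\|\gamma_\eta+\nu\gamma_\eta'\|_{\sL^1(\R_{>0})}\leq 2+\eta\mD_1$ this yields $\tfrac2\nu(2+\eta\mD_1)\sim c_\eta\sim\eta^{-1}$, which is \emph{not} uniform. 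Your closing remark only repairs the tail part $\int_x^\infty(\gamma_\eta+\nu\gamma_\eta')(y-x)R(y)\dd y$, whose $\sL^1$-norm is indeed $\mathcal{O}(\eta)=\mathcal{O}(\nu)$ and hence survives the $\nu^{-1}$. The local part $c_\eta\nu R$ of $\partial_x W_\eta$ is of order one, and under your absolute-value bound its contribution is $2c_\eta\|V'\|_{\sL^\infty}\|E_\eta\|_{\sL^\infty}\|R\|_{\sL^1(\R)}=\mathcal{O}(\eta^{-1})$, so the Gronwall constant degenerates and the theorem does not follow as written.

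The missing ingredient is a sign cancellation, not a norm bound. Setting $f\coloneqq c_\eta\nu\,V'(W_\eta)RE_\eta$, the two $c_\eta$-order contributions are
\begin{equation*}
\tfrac1\nu\int_\R\sgn(R)f\dd x=c_\eta\int_\R V'(W_\eta)|R|E_\eta\dd x\leq 0
\end{equation*}
(since $V'\leq0$ and $E_\eta\geq0$) and $-\tfrac1\nu\int_\R\sgn(R)\mathcal{E}_\nu(f)\dd x$, which is bounded above by $\tfrac1\nu\|f\|_{\sL^1(\R)}=c_\eta\int_\R(-V'(W_\eta))|R|E_\eta\dd x$ because $\mathcal{E}_\nu$ is an $\sL^1$-contraction; their sum is therefore $\leq0$. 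This is precisely the ``classical exponential cancellation'' of the purely exponential case that the paper invokes (its first and third terms after the change of order of integration), and it is what removes the $c_\eta$-singular contribution so that only the $\mathcal{O}(\eta)/\nu=\mathcal{O}(1)$ tail terms remain in the Gronwall rate. With this step inserted, the rest of your argument — including the $W_\eta$ bound $|W_\eta(t,\cdot)|_{\sTV(\R)}\leq(|c_\eta\nu|+\|\gamma_\eta+\nu\gamma_\eta'\|_{\sL^1(\R_{>0})})|E_\eta(t,\cdot)|_{\sTV(\R)}$ — is sound (up to the harmless renaming of the constant $\mD_3$).
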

\begin{proof}

By the stability result in \cref{theo:stability}, we can approximate the solution smoothly.
Thus, we can differentiate through in the equation for \(E\coloneqq E_{\eta}\) in \cref{lem:surrogate_dynamics_E} and have, 
for \((t,x)\in\OT\) (we will often write \(E_{x}\) for \(\partial_{x}E\) etc. and use \(\nu \coloneqq -\eta \gamma'(0)^{-1}\) for brevity),
\begin{align}
    \partial_{t}E_{x}&=\tfrac{1}{\nu} V'(W)W_x E-V'(W)W_xE_x-V(W)E_{xx}+\tfrac{1}{\nu^{2}}V(W)E\notag\\
    &\quad-\tfrac{1}{\nu^{3}}\int_{x}^{\infty}\exp\big(\tfrac{x-y}{\nu}\big)V(W)E\dd y+\tfrac{1}{\nu^{2}}\int_{x}^{\infty}\exp\big(\tfrac{x-y}{\nu}\big)V(W)E_y\dd y\notag\\
    &=\tfrac{1}{\nu} V'(W)W_x E-V'(W)W_xE_x-V(W)E_{xx}\label{eq:E_t_x_1}\\
    &\quad -\tfrac{1}{\nu^{2}}\int_{x}^{\infty}\exp\big(\tfrac{x-y}{\nu}\big)V'(W)\partial_{y} WE\dd y.\label{eq:E_t_x_2}
\end{align}
As we require replacement of $W$ and $W_x$ by $E$ and $E_x$, we first compute $W_x$ in terms of $E,E_x$
\begin{align}
    W_x(t,x) &= - \gamma_\eta(0) q(t,x) - \int_x^\infty \gamma_\eta'(y-x) q(t,y) \dd y\label{eq:W_x_1}\\
    \intertext{using that, thanks to the exponential kernel, we have \(q\equiv E-\nu E_{x}\)}
    &= - c_\eta \big(E(t,x)- \nu E_x(t,x) \big)- \int_x^\infty \gamma_\eta'(y-x) \big(E(t,y)- \nu E_y(t,y) \big) \dd y\notag\\
    &= \nu c_\eta E_x(t,x) + \int_{x}^\infty \big(\gamma_\eta(y-x)+ \nu \gamma_\eta'(y-x)\big) E_y(t,y)\dd y.\label{eq:W_x_2}
\end{align}
Having established this, we can now derive a uniform \(\sTV\) estimate by using the previous identity suppressing the explicit dependencies on \(t\) and (sometimes) \(x\) as well:
\begin{align*}
    &\tfrac{\dd}{\dd t}\int_{\R}|E_x|\dd x= \int_{\R}\sgn(E_x) E_{t,x}\dd x
    \intertext{taking advantage of the identity in \cref{lem:surrogate_dynamics_E}}
    &=\tfrac{1}{\nu}\int_{\R}\sgn(E_x)V'(W)W_xE\dd x-\int_{\R}\sgn(E_x)V'(W)W_xE_x\dd x\\
    &\qquad -\int_{\R}\sgn(E_x)V(W) E_{xx}\dd x-\tfrac{1}{\nu^{2}}\int_{\R}\sgn(E_x)\int_{x}^{\infty}\e^{\frac{x-y}{\nu}}V'(W) W_{y}E\dd y\dd x
    \intertext{integrating by parts}
    &=\tfrac{1}{\nu}\int_{\R}\sgn(E_x)V'(W)W_xE\dd x-\tfrac{1}{\nu^{2}}\int_{\R}\sgn(E_x)\int_{x}^{\infty}\e^{\frac{x-y}{\nu}}V'(W)W_{y} E\dd y\dd x
    \intertext{using \crefrange{eq:W_x_1}{eq:W_x_2}}
    &=\tfrac{1}{\nu}\!\!\int_{\R}\!\!\sgn(E_x(x))V'(W(x))\Big(\nu c_\eta E_x(x) + \!\!\!\int_{x}^\infty \!\!\!\!\!(\gamma_\eta(y-x)+ \nu \gamma_\eta'(y-x)) E_y(y)\dd y\Big)E(x)\dd x\\
    &\quad -\tfrac{1}{\nu^{2}}\int_{\R}\sgn(E_x(x))\int_{x}^{\infty}\e^{\frac{x-y}{\nu}}V'(W(y)) \\
    &\qquad \qquad \qquad \cdot \bigg(\nu c_\eta E_y(y) + \int_{y}^\infty \big(\gamma_\eta(z-y)+ \nu \gamma_\eta'(z-y)\big) E_z(z)\dd z\bigg)E(y)\dd y\dd x\\
    &=c_\eta\int_{\R}\sgn(E_x(x))V'(W(x))  E_{x}(x) E(x) \dd x\\
    &\quad +\tfrac{1}{\nu}\int_{\R}\sgn(E_x(x))V'(W(x))\int_{x}^\infty \big(\gamma_\eta(y-x)+ \nu \gamma_\eta'(y-x)\big) E_y(y)\dd y E(x)\dd x\\
    &\quad -\tfrac{c_\eta}{\nu}\int_{\R}\sgn(E_x(x))\int_{x}^{\infty}\e^{\frac{x-y}{\nu}}V'(W(y)) E_y(y) E(y)\dd y\dd x\\
    &\quad -\tfrac{1}{\nu^{2}}\!\!\!\int_{\R}\!\!\sgn(E_x(x))\!\!\!\int_{x}^{\infty}\!\!\!\e^{\frac{x-y}{\nu}}V'(W(y))\!\!\!\int_{y}^\infty \!\!\big(\gamma_\eta(z-y)+ \nu \gamma_\eta'(z-y)\big)E_z(z)\dd z E(y)\dd y\dd x\\
\intertext{and changing the order of integration in the second last term yields}
    &=c_\gamma\int_{\R}V'(W(x)) |E_x(x)|  E(x) \dd x\\
    &\quad +\tfrac{1}{\nu}\int_{\R}E_y(y)\int_{-\infty}^y \sgn(E_x(x))V'(W(x))\big(\gamma_\eta(y-x)+ \nu \gamma_\eta'(y-x)\big) E(x) \dd x \dd y\\
    &\quad -\tfrac{c_\gamma}{\nu}\int_{\R}V'(W(y)) E_y(y) E(y)\int_{-\infty}^{y}\sgn(E_x(x))\e^{\frac{x-y}{\nu}}\dd x\dd y\\
    &\quad -\tfrac{1}{\nu^{2}}\!\!\!\int_{\R}\!\int_{x}^{\infty}\!\!\!\!\!\int_{x}^z \!\!\!\sgn(E_x(x))\e^{\frac{x-y}{\nu}}V'(W(y))\big(\gamma_\eta(z-y)+ \nu \gamma_\eta'(z-y)\big)E_z(z)E(y)\dd y \dd z\dd x\\
    &=c_\gamma\int_{\R}V'(W(x)) |E_x(x)|  E(x) \dd x\\
    &\quad +\tfrac{1}{\nu}\int_{\R}E_y(y)\int_{-\infty}^y \sgn(E_x(x))V'(W(x))\big(\gamma_\eta(y-x)+ \nu \gamma_\eta'(y-x)\big) E(x) \dd x \dd y\\
    &\quad -\tfrac{c_\gamma}{\nu}\int_{\R}V'(W(y)) |E_y(y)| E(y)\int_{-\infty}^{y}\e^{\frac{x-y}{\nu}}\dd x\dd y\\
    &\quad -\tfrac{1}{\nu^{2}}\!\!\!\int_{\R}\!\int_{x}^{\infty}\!\!\!\!\!\int_{x}^z \!\!\!\!\sgn(E_x(x))\e^{\frac{x-y}{\nu}}V'(W(y))\big(\gamma_\eta(z-y)+ \nu \gamma_\eta'(z-y)\big)  E_z(z)E(y)\dd y \dd z\dd x.
    \intertext{The first and third terms cancel analogously to the ``classical'' exponential term estimates as e.g., in \cite{coclite2022general}, and we obtain}
    &\leq \tfrac{1}{\nu}\int_{\R}E_y(y)\int_{-\infty}^y \sgn(E_x(x))V'(W(x))\big(\gamma_\eta(y-x)+ \nu \gamma_\eta'(y-x)\big) E(x) \dd x \dd y\\
    &\quad -\tfrac{1}{\nu^{2}}\!\!\!\int_{\R}\!\!E_z(z)\!\!\int_{-\infty}^{z}\!\int_{x}^z \!\!\!\!\sgn(E_x(x))\e^{\frac{x-y}{\nu}}V'(W(y))\big(\gamma_\eta(z-y)+ \nu \gamma_\eta'(z-y)\big) E(y)\dd y \dd x\dd z
    \intertext{and by another change of the order of integration in the second term}
    &= \tfrac{1}{\nu}\int_{\R}E_{y}(y)\int_{-\infty}^y \sgn(E_x(x))V'(W(x))\big(\gamma_\eta(y-x)+ \nu \gamma_\eta'(y-x)\big) E(x) \dd x \dd y\\
    &\quad -\tfrac{1}{\nu^{2}}\int_{\R}E_y(y)\int_{-\infty}^{y}V'(W(z))\big(\gamma_\eta(y-z)+ \nu \gamma_\eta'(y-z)\big) E(z) \\
    & \qquad\qquad\qquad\qquad\qquad\qquad \cdot \int_{-\infty}^y\sgn(E_x(x)) \e^{\frac{x-z}{\nu}}\dd x \dd z\dd y\\ 
    &\leq \tfrac{2}{\nu}|E|_{\sTV(\R)}\|\gamma_\eta - \nu \gamma_\eta'\|_{\sL^1(\R_{>0})}\|V'\|_{\sL^\infty((0,2\|q_0\|_{\sL^\infty(\R)}))}\|E\|_{\sL^\infty(\R)}.
    \end{align*}
    By \cref{theo:max} $\|q\|_{\sL^\infty(\R)} \leq 2\|q_0\|_{\sL^\infty(\R)}$ and consequently $\|E\|_{\sL^\infty(\R)},\|W\|_{\sL^\infty(\R)}\leq 2\|q_0\|_{\sL^\infty(\R)}$ for $\eta$ small enough. As $\nu = -\eta \gamma'(0)^{-1}$ there exists $\mD_3 \in \R_{>0}$ uniform in $\eta$ due to \cref{eq:the_key_equation} in \cref{prop:prop_gamma_deriv} with $a \equiv -\gamma'(0)^{-1}$ s.t.
    \begin{align*}
         \tfrac{\dd}{\dd t}|E|_{\sTV(\R)}\leq \mD_3\|V'\|_{\sL^\infty((0,\|q_0\|_{\sL^\infty(\R)}))}\|q_0\|_{\sL^\infty(\R)} |E|_{\sTV(\R)}.
    \end{align*}
Applying Gronwall's inequality and reintroducing the explicit dependencies on $t$ and $\eta$ we end up with the estimate
\begin{align*}
    |E_{\eta}(t,\cdot)|_{\sTV(\R)}&\leq |E_{\eta}(0,\cdot)|_{\sTV(\R)}\e^{\mathcal D_3\|V'\|_{\sL^{\infty}((0,2\|q_0\|_{\sL^\infty(\R)))}}\|q_0\|_{\sL^\infty(\R)}t}\\
    &\leq |q_{0}|_{\sTV(\R)}\e^{\mathcal D_3\|V'\|_{\sL^{\infty}((0,2\|q_0\|_{\sL^\infty(\R)))}}\|q_0\|_{\sL^\infty(\R)}t}.
\end{align*}
As the right hand side is uniform in \(\eta\in\R_{>0}\), we obtain the claimed uniform \(\sTV\) estimate. It remains to show that also \(W_{\eta}\) admits a proper \(\sTV\) bound uniform in \(\eta\). To this end, recall \crefrange{eq:W_x_1}{eq:W_x_2}, take the absolute value and integrate over space to arrive for \(t\in[0,T]\) at
\begin{align*}
    |W_{\eta}(t,\cdot)|_{\sTV(\R)}&\leq \nu c_{\eta}|E_{\eta}(t,\cdot)|_{\sTV(\R)}+ \|\gamma_{\eta}+\nu\gamma'\|_{\sL^{1}(\R)}|E_{\eta}(t,\cdot)|_{\sTV(\R).}
    \intertext{Using \(\nu=-\tfrac{\eta}{\gamma'(0)}\) and \cref{prop:bounds_cgamma} for the bounds on \(c_{\eta}\) as well as \cref{eq:the_key_equation} in \cref{prop:bounds_cgamma} for the bounds on \(\gamma_{\eta}+\nu \gamma'_{\eta}\) when \(\eta\in (0,\tfrac{1}{4})\) yields}
    &\leq -\tfrac{\eta}{\gamma'(0)}\tfrac{-\gamma'(0)}{\eta(1-2\eta)}|E_{\eta}(t,\cdot)|_{\sTV(\R)}-\tfrac{\gamma'(0)}{\eta}\eta \mD_1=\big(\tfrac{1}{1-2\eta}+\mathcal{D}_{3}\big)|E_{\eta}(t,\cdot)|_{\sTV(\R)}
\end{align*}
with \(\mathcal{D}_{3}=-\gamma'(0)\mathcal{D}_{1}\). This concludes the proof.
\end{proof}

\begin{remark}[Uniform \(\sTV\) estimate]
    As can be seen, the obtained \(\sTV\) bound can increase exponentially in time, but is, on every finite time horizon, finite. This differs from the classical result on exponential kernels \cite{coclite2022general} where the \(\sTV\) semi-norm is diminishing over time. Concluding, one can state that we sacrificed this diminishing property for the sake of more general kernels. This is particularly counter-intuitive as the solution to the local conservation law is \(\sTV\) non-increasing.

    The classical result, i.e., a non-increasing \(\sTV\)-seminorm, can still be recovered from the estimate in \cref{eq:TV_non_increasing} as in this case (the purely exponential kernel) the constant \(\mathcal{D}_{2}\) is zero so that one indeed obtains
    \[
|E_{\eta}(t,\cdot)|_{\sTV(\R)}\leq |E_{\eta}(t,\cdot)|_{\sTV(\R)}\leq  |q_{0}|_{\sTV(\R)},\quad \forall t\in[0,T].
    \]
\end{remark}
Having derived the uniform \(\sTV\) bound in the spatial variable, one can obtain ``time-compactness'' for the surrogate quantity \(E\) and the nonlocal term \(W\) as well, as the nonlocal term satisfies the balance law in \cref{lem:surrogate_dynamics_E}.
\begin{theorem}[Compactness of {\(E_{\eta}, W_{\eta}[q_{\eta},\gamma_{\eta}]\) and convergence to a weak solution for \(E_{\eta},W[q_{\eta},\gamma_{\eta}]\) and \(q_{\eta}\)}]\label{theo:strong_convergence}
The sets
\begin{align*}
\Big\{E_{\eta}\in \sC\big([0,T];\sL^{1}_{\loc}(\R)\big): \eta\in\R_{>0},\ E_{\eta}\ \text{as in \cref{theo:TV}}\Big\}&\overset{\text{c}}{\hookrightarrow} \sC\big([0,T];\sL^{1}_{\loc}(\R)\big)\\
\Big\{W[q_{\eta},\gamma_{\eta}]\in \sC\big([0,T];\sL^{1}_{\loc}(\R)\big): \eta\in\R_{>0},\ W[q_{\eta},\gamma_{\eta}]\ \text{as in \cref{eqn:nonlocalterm}}\Big\}&\overset{\text{c}}{\hookrightarrow} \sC\big([0,T];\sL^{1}_{\loc}(\R)\big)
\end{align*}
are compactly embedded into the space \(\sC\big([0,T];\sL^{1}_{\loc}(\R)\big)\). Moreover, there exists a subsequence \(\{\eta_{k}\}_{k\in\N}\subset\R_{>0}\) and \(q^{*}\in \sC\big([0,T];\sL^{1}_{\loc}(\R)\big)\cap\sL^{\infty}((0,T);\sL^{\infty}(\R))\) so that
\begin{equation}
    \lim_{\eta\rightarrow 0}\big\|X_{\eta}-q^{*}\big\|_{\sC([0,T];\sL^{1}_{\loc}(\R))}=0\ \text{ for } X_{\eta}\in \{E_{\eta},W_{\eta},q_{\eta}\}
\end{equation}
and \(W_{\eta}\coloneqq W[q_{\eta},\gamma_{\eta}]\) as in \cref{eqn:NBL} as well as \(q_{\eta}\) the unique solution to \cref{eqn:NBL}.
\end{theorem}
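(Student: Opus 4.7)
The plan is to establish relative compactness of $\{E_\eta\}_\eta$ in $\sC([0,T];\sL^{1}_{\loc}(\R))$ via the Arzela--Ascoli theorem and then transfer the resulting convergent subsequence to $\{W_\eta\}_\eta$ and $\{q_\eta\}_\eta$ using \cref{lem:W_E_L_infty_estimate} and the pointwise identity from \cref{eq:E_E_x_q} relating $q_\eta$, $E_\eta$ and $\nu\partial_x E_\eta$ with $\nu = -\eta/\gamma'(0)$. \textbf{Spatial precompactness} is essentially free: the uniform bound $\sup_{\eta,t}|E_\eta(t,\cdot)|_{\sTV(\R)}\leq M$ from \cref{theo:TV}, together with the uniform $\sL^\infty$ bound provided by \cref{theo:max}, confines each slice $\{E_\eta(t,\cdot)\}_\eta$ to a fixed $\sBV(\R)$-ball, so Helly's selection theorem yields relative compactness in $\sL^{1}_{\loc}(\R)$ for each $t\in[0,T]$.

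The principal obstacle is \textbf{time equicontinuity}, where the singular prefactors $\nu^{-1}$ and $\nu^{-2}$ appearing in the surrogate dynamics of \cref{lem:surrogate_dynamics_E} must be shown to cancel. The transport piece $-V(W_\eta)\partial_x E_\eta$ is bounded in $\sL^{1}(K)$ by $\|V\|_{\sL^\infty}|E_\eta|_{\sTV(\R)}$. Exploiting the identity $\tfrac{1}{\nu^2}\int_x^\infty\ee^{(x-y)/\nu}\dd y=\tfrac{1}{\nu}$, the two singular source terms combine into
\begin{align*}
\tfrac{1}{\nu^2}\!\!\int_x^\infty\!\!\!\ee^{\frac{x-y}{\nu}}\big[V(W(x))E(x)-V(W(y))E(y)\big]\dd y + \tfrac{1}{\nu}\!\!\int_x^\infty\!\!\!\ee^{\frac{x-y}{\nu}}V(W(y))\partial_y E(y)\dd y.
\end{align*}
Bounding the bracket in the first integral by $\int_x^y|F'(z)|\dd z$ for $F=V(W_\eta)E_\eta$ and applying Fubini produces an $\sL^{1}(K)$ estimate by $|F|_{\sTV(\R)}$, which is dominated by $\|V\|_{\sL^\infty}|E_\eta|_{\sTV}+\|V'\|_{\sL^\infty}\|E_\eta\|_{\sL^\infty}|W_\eta|_{\sTV}$ and therefore uniformly bounded thanks to \cref{theo:TV}. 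The second integral is handled by Fubini directly, using $\|\partial_x E_\eta\|_{\sL^{1}(\R)}=|E_\eta|_{\sTV(\R)}$. Summing yields $\|\partial_tE_\eta(t,\cdot)\|_{\sL^{1}(K)}\leq C_K$ uniformly in $\eta$ and $t$, and hence the Lipschitz-in-time bound $\|E_\eta(t,\cdot)-E_\eta(s,\cdot)\|_{\sL^{1}(K)}\leq C_K|t-s|$.

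Combining the two previous steps, Arzela--Ascoli provides a subsequence $E_{\eta_k}\to q^*$ in $\sC([0,T];\sL^{1}_{\loc}(\R))$, and \cref{theo:max} passes to the limit to give $q^*\in\sL^\infty((0,T);\sL^\infty(\R))$. Since $\|W_\eta-E_\eta\|_{\sC([0,T];\sL^\infty(\R))}=\mathcal{O}(\eta)$ by \cref{lem:W_E_L_infty_estimate}, it follows that $W_{\eta_k}\to q^*$ in $\sC([0,T];\sL^{1}_{\loc}(\R))$, which simultaneously delivers the claimed compactness of $\{W_\eta\}_\eta$ (its spatial precompactness being again a direct consequence of \cref{theo:TV}). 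Finally, from \cref{eq:E_E_x_q} one has $\|q_\eta-E_\eta\|_{\sL^{1}(K)}=\nu\|\partial_xE_\eta\|_{\sL^{1}(K)}\leq\nu|E_\eta|_{\sTV(\R)}=\mathcal{O}(\eta)$ uniformly in $t$, so $q_{\eta_k}\to q^*$ in the same topology, completing the proof.
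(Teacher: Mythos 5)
Your proposal is correct and follows essentially the same route as the paper: spatial precompactness from the uniform \(\sTV\) and \(\sL^\infty\) bounds of \cref{theo:TV} and \cref{theo:max}, time equicontinuity extracted from the surrogate dynamics of \cref{lem:surrogate_dynamics_E} (the paper cancels the singular prefactors by an integration by parts producing a \(V'(W_\eta)\partial_yW_\eta\) term bounded via \(|W_\eta|_{\sTV}\), whereas you use the normalization identity \(\nu^{-2}\int_x^\infty\ee^{(x-y)/\nu}\dd y=\nu^{-1}\) and bound via \(|V(W_\eta)E_\eta|_{\sTV}\) --- both rest on the same uniform \(\sTV\) bounds), and the transfer to \(W_\eta\) and \(q_\eta\) via \cref{lem:W_E_L_infty_estimate} and \(q_\eta\equiv E_\eta-\nu\partial_xE_\eta\) is identical. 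The paper invokes Simon's compactness lemma where you use Arzel\`a--Ascoli and Helly, but these are interchangeable here.
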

\begin{proof}
The ``spatial'' compactness is an intermediate consequence of the uniform \(\sTV\) bound in \cref{theo:TV}. However, we require the compactness in \(\sC([0,T];\sL^{1}_{\loc}(\R))\), and need to invoke thus \cite[Lemma 1]{simon}. It states that a set \(F\subset \sC([0,T];B)\) is relatively compact in \(\sC([0,T];B)\) iff
\begin{itemize}
    \item \(F(t)\:\big\{f(t)\in B: f\in F\}\) is relatively compact in \(B\ \forall t\in[0,T]\).
    \item \(F\) is uniformly equi-continuous, i.e.\ 
    \[
    \!\!\forall \sigma\in\R_{>0}\ \exists \delta\in\R_{>0}\ \forall f \!\in \!F\ \forall (t_{1},t_{2})\in [0,T]^{2} \text{ with } |t_{1}-t_{2}|\leq \delta:\ \|f(t_{1})-f(t_{2})\|_{B}\leq \sigma.
    \]
\end{itemize}
We chose \(F(t)=\{E_{\eta}(t,\cdot)\in \sL^{1}(I)\) and \(B=\sL^{1}(I)\) with \(I\) and an open bounded interval. Thanks to \cref{theo:TV}, we have uniform \(\sTV\) bounds, so we can invoke the compactness of \(\sBV(I)\) in \(\sL^{1}(I)\) \cite[Theorem 13.35]{leoni} and obtain the first item.

For the second item, we estimate as follows for \((t_{1},t_{2})\in[0,T]^{2},\ t_{1}\leq t_{2}\):
\begin{align*}
    &\int_{I}|E_{\eta}(t_{1},x)-E_{\eta}(t_{2},x)|\dd x=\int_{I}\Big|\int_{t_{1}}^{t_{2}}\partial_{s}E_{\eta}(s,x)\dd s\Big|\dd x\leq \int_{t_{1}}^{t_{2}}\!\!\int_{I}|\partial_{s}E_{\eta}(s,x)|\dd x\dd s\\
    \intertext{using the surrogate dynamics in \cref{lem:surrogate_dynamics_E}}
    &\leq \int_{t_{1}}^{t_{2}}\!\!\int_{I}|V(W_{\eta}(s,x))\partial_{x}E_{\eta}(s,x)|\dd x\dd s\\
    &\quad +\int_{t_{1}}^{t_{2}}\!\!\int_{I}\bigg|\tfrac{1}{\nu} V(W[q_\eta,\gamma_\eta](t,x))E_{\nu,\eta}(t,x) \\
    &\qquad \qquad \qquad -\tfrac{1}{\nu^{2}}\!\!\int_{x}^{\infty}\!\!\!\!\e^{\frac{x-y}{\nu}}V(W[q_\eta,\gamma_\eta](t,y))\big(E_{\eta}(t,y)-\nu \partial_y E_{\eta}(t,y)\big)\dd y\bigg|\dd x
    \intertext{and an integration by parts in the second term yields}
    &\leq \|V\|_{\sL^{\infty}(\mathcal{Q})}|t_{2}-t_{1}| |E_{\eta}|_{\sL^{\infty}((0,T);\sTV(\R))}\\
    &\quad +\int_{t_{1}}^{t_{2}}\int_{I}\Big|\tfrac{1}{\nu}\int_{x}^{\infty}\e^{\frac{x-y}{\nu}}E_{\eta}(t,y)V'(W[q_{\eta},\gamma_{\eta}](t,y))\partial_{y}W[q_{\eta},\gamma_{\eta}](t,y)\dd y\Big|\dd x\\
    &\leq \|V\|_{\sL^{\infty}(\mathcal{Q})}|t_{2}-t_{1}| |E_{\eta}|_{\sL^{\infty}((0,T);\sTV(\R))}\\
    &\quad +|t_{2}-t_{1}|2\|q_{0}\|_{\sL^{\infty}(\R)}\|V'\|_{\sL^{\infty}(\mathcal{Q})} \int_{\R}\tfrac{1}{\nu}\int_{x}^{\infty}\e^{\frac{x-y}{\nu}}\big|\partial_{y}W[q_{\eta},\gamma_{\eta}](t,y)\big|\dd y\dd x\\
    \intertext{exchanging order of integration}
    &\leq \|V\|_{\sL^{\infty}(\mathcal{Q})}|t_{2}-t_{1}| |E_{\eta}|_{\sL^{\infty}((0,T);\sTV(\R))}\\
    &\quad +|t_{2}-t_{1}|2\|q_{0}\|_{\sL^{\infty}(\R)}\|V'\|_{\sL^{\infty}(\mathcal{Q})} \int_{\R}\tfrac{1}{\nu}\int_{-\infty}^{x}\e^{\frac{x-y}{\nu}}\dd x\big|\partial_{y}W[q_{\eta},\gamma_{\eta}](t,y)\big|\dd y\\
    &\leq \|V\|_{\sL^{\infty}(\mathcal{Q})}|t_{2}-t_{1}| |E_{\eta}|_{\sL^{\infty}((0,T);\sTV(\R))} \\
    &\qquad +|t_{2}-t_{1}|2\|q_{0}\|_{\sL^{\infty}(\R)}\|V'\|_{\sL^{\infty}(\mathcal{Q})} |W_{\eta}|_{\sL^{\infty}((0,T);\sTV(\R))}.
\end{align*}
Thanks to \cref{theo:TV}, all the terms on the right hand side are uniformly bounded with respect to \(\eta\), so that we obtain Lipschitz continuity in time when measuring in space in \(\sL^{1}\), and, in particular, we have satisfied item 2 (the uniform equi-continuity).
As \(I\subset\R\) being open and bounded was arbitrary, we conclude that the set \(\{E_{\eta}:\ \eta\in\R_{>0}\}\) is compact in \(\sC([0,T];\sL^{1}_{\text{loc}}(\R))\).
Concerning the compactness of \(\big\{W_{\eta}:\ \eta\in(0,\tfrac{1}{2})\big\}\) we just remark that the spatial compactness (item 1) has been established in \cref{theo:TV} as was done for \(E_{\eta}\), and, for the time compactness, one would compute \(\partial_{t}W_{\eta}\) and express everything in terms of \(E\). We do not go into the details.

Finally, concerning the convergence, we have, by the compactness of \(E_{\eta}\), that \(\exists q^{*}\in \sC\big([0,T];\sL^{1}_{\loc}(\R)\big)\) and a subsequence \((\eta_{k})_{k\in\N},\ \lim_{k\rightarrow\infty}\eta_{k}=0\) so that
\[
\lim_{k\rightarrow\infty}\|E_{\eta_{k}}-q^{*}\|_{\sC([0,T];\sL^{1}_{\loc}(\R))}=0.
\]
Thanks to the identity in \cref{eq:E_E_x_q}, we obtain, on the same subsequence,
\[
\forall t\in[0,T]:\  \eta_{k}|E_{\eta_{k}}(t,\cdot)|_{\sTV(\R)}=\|E_{\eta_{k}}(t,\cdot)-q_{\eta_{k}}(t,\cdot)\|_{\sL^{1}(\R)}.
\]
and for \(\eta_{k}\rightarrow 0\), the convergence of \(q_{\eta_{k}}\in\sC\big([0,T];\sL^{1}_{\text{loc}}(\R)\big)\). Finally, concerning the convergence of \(W_{\eta_{k}}\), we estimate for \(t\in[0,T]\) and \(\Omega\subset\R\) open and bounded
\begin{align*}
    \|q^{*}-W_{\eta_{k}}\|_{\sC([0,T];\sL^{1}(\Omega))}&\leq \|q^{*}-E_{\eta_{k}}\|_{\sC([0,T];\sL^{1}(\Omega))}+\|E_{\eta_{k}}-W_{\eta_{k}}\|_{\sC([0,T];\sL^{1}(\Omega))}\\
    &\leq \|q^{*}-E_{\eta_{k}}\|_{\sC([0,T];\sL^{1}(\Omega))}+|\Omega|\!\!\!\sup_{t\in[0,T]}\!\!\|E_{\eta_{k}}(t,\cdot)-W_{\eta_{k}}(t,\cdot)\|_{\sL^{\infty}(\R)}
    \intertext{ and taking advantage of \cref{lem:W_E_L_infty_estimate} we arrive at}
    &\leq \|q^{*}-E_{\eta_{k}}\|_{\sC([0,T];\sL^{1}(\Omega))}+|\Omega|\eta \big(2\mathcal D_1   + 4   \big)\|q_0\|_{\sL^\infty(\R)}.
\end{align*}
The first term converges to zero as previously shown, and the second one converges (\(\Omega\) was assumed to be open and bounded, and is thus Lebesgue measurable with finite Lebesgue measure \(|\Omega|\)) for any sequence \(\eta_{k}\rightarrow 0\) to zero, resulting in the claimed \(\sC\big([0,T];\sL^{1}_{\loc}(\R)\big)\) convergence of \(W_{\eta_{k}}\). This concludes the proof.
\end{proof}

\section{Convergence to the entropy solution}\label{sec:convergence}
In this section, we show that if the nonlocal solution converges strongly in \(\sL^{1}\), that it is automatically entropic. For this, we follow an approach by \cite{colombo2023nonlocal} for the surrogate nonlocal term \(E_\eta\) as defined in \cref{eq:surrogate_nonlocal}.

Together with  \cref{sec:TV_uniform}, this gives the convergence of solutions of the nonlocal conservation law to the local entropy solution for the nonlocal kernels defined in \cref{ass:gamma}.
\begin{theorem}[Convergence to the Entropy solution]\label{theo:convergence}
For \(\eta\in\R_{>0}\) and \(T\in\R_{>0}\) let \(q_{\eta}\in \sC([0,T];\sL^{1}_{\loc}(\R))\cap \sL^{\infty}((0,T);\sL^{\infty}(\R))\) be the solution to the nonlocal conservation law stated in \cref{defi:nonlocal_conservation_law}. Assume that there exists \(q^{*}\in \sC\big([0,T];\sL^{1}_{\loc}(\R)\big)\cap \sL^{\infty}((0,T);\sL^{\infty}(\R))\) so that
\[
\lim_{\eta\rightarrow 0}\|E_{\eta}-q^{*}\|_{\sC([0,T];\sL^{1}_{\loc}(\R))}=0
\]
where \(E_{\eta}\coloneqq E[q_{\eta},\gamma_{\eta}]\) as in \cref{theo:TV}.
Then \(q^{*}\) is the weak entropy solution \cref{defi:entropy} of the local conservation law \cref{eq:local_conservation_law}.
\end{theorem}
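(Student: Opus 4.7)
The plan is to follow the strategy of \cite{colombo2023nonlocal} for the exponential case, adapted to the present kernel class through the surrogate \(E_\eta\). First I would pass to smooth approximating solutions \(q_\eta \in \sC^\infty(\OT)\) via \cref{theo:stability}, so that all subsequent manipulations can be carried out classically. The hypothesis \(E_\eta \to q^*\) in \(\sC([0,T];\sL^1_{\loc}(\R))\), together with \(\|W_\eta - E_\eta\|_{\sL^\infty(\R)} = \mathcal O(\eta)\) from \cref{lem:W_E_L_infty_estimate} and the identity \(q_\eta = E_\eta - \nu \partial_x E_\eta\) with \(\nu = -\eta/\gamma'(0)\) (which yields \(\|q_\eta - E_\eta\|_{\sL^1(\R)} = \nu\,|E_\eta|_{\sTV(\R)} = \mathcal O(\eta)\) via the uniform \(\sTV\) bound from \cref{theo:TV}), promotes the strong convergence to \(W_\eta, q_\eta \to q^*\) as well in \(\sC([0,T];\sL^1_{\loc}(\R))\).

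Next, I would multiply the smooth equation \(\partial_t q_\eta + \partial_x(V(W_\eta)\,q_\eta) = 0\) by \(\alpha'(q_\eta)\phi\) for convex \(\alpha\in\sC^2(\R)\) and non-negative \(\phi\in\sC_c^1([0,T)\times\R;\R_{\geq 0})\). Two successive integrations by parts, combined with the algebraic identity \(\beta(q) = V(q)\,q\,\alpha'(q) - \int_0^q V(r)\,r\,\alpha''(r)\dd r\) (a direct consequence of \(\beta'\equiv \alpha' f'\) with \(f(s)=sV(s)\)), should yield
\begin{align*}
\mathcal{E}[\phi,\alpha,q_\eta] \;=\; \iint_{\OT}\bigl[V(q_\eta) - V(W_\eta)\bigr]\Bigl\{q_\eta \alpha''(q_\eta)\,\partial_x q_\eta\,\phi + q_\eta\alpha'(q_\eta)\,\partial_x \phi\Bigr\}\dd x\dd t.
\end{align*}
The second contribution on the right vanishes in the limit: \(V\) is Lipschitz, \(W_\eta - q_\eta\to 0\) in \(\sL^1_{\loc}\), and \(q_\eta\alpha'(q_\eta)\partial_x\phi\) is uniformly \(\sL^\infty\)-bounded on \(\supp\phi\) by \cref{theo:max}.

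The delicate step is controlling the first contribution, where \(\partial_x q_\eta\) is \emph{not} uniformly \(\sL^\infty\)-bounded as \(\eta \to 0\). I would write \(V(q_\eta)-V(W_\eta)=V'(\xi_\eta)(q_\eta-W_\eta)\) with \(q_\eta - W_\eta = -\nu\,\partial_x E_\eta - (W_\eta - E_\eta)\), integrate by parts once more to move the derivative off \(q_\eta\), and substitute \(\partial_x W_\eta\) via the identity \cref{eq:W_x_1}--\cref{eq:W_x_2}. The cancellations \(\nu c_\eta\to 1\) from \cref{prop:bounds_cgamma} and \(\|\gamma_\eta + \nu \gamma_\eta'\|_{\sL^1(\R_{>0})} = \mathcal O(\eta)\) from \cref{prop:prop_gamma_deriv} (applied with \(a(\eta)=-1/\gamma'(0)\)) are what make this rearrangement work: all non-negligible remainders factor as \(\mathcal O(\eta)\) times the uniform \(\sTV\) bound on \(E_\eta\), while the surviving principal term is \emph{non-negative} by virtue of \(V'\leq 0\) and the strict concavity of \(f=sV(s)\) together with convexity of \(\alpha\) (which gives \(s\alpha''(s)\geq 0\)). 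Letting \(\eta\to 0\), the left-hand side tends to \(\mathcal{E}[\phi,\alpha,q^*]\) by dominated convergence, while the right-hand side is bounded below by \(-o(1)\), yielding the claimed entropy inequality. The main obstacle is precisely this sign identification: the fine kernel estimates of \cref{sec:kernel}—going beyond a bare uniform \(\sTV\) bound—are indispensable for handling the mismatch between \(\partial_x q_\eta\), \(\partial_x E_\eta\), and \(\partial_x W_\eta\) in the limit.
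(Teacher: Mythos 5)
Your overall architecture (pass to smooth solutions, derive an exact identity for the entropy production, show the error terms vanish and the principal term has a sign, conclude by continuity of \(\mathcal E\)) mirrors the paper's, but you evaluate the entropy functional at \(q_\eta\) while the paper evaluates it at \(E_\eta\), and this difference is not cosmetic: it is where your argument has a genuine gap. Your identity
\(\mathcal{E}[\phi,\alpha,q_\eta]=\iint_{\OT}\bigl[V(q_\eta)-V(W_\eta)\bigr]\bigl\{q_\eta\alpha''(q_\eta)\partial_xq_\eta\,\phi+q_\eta\alpha'(q_\eta)\phi_x\bigr\}\dd x\dd t\)
is correct, and the \(\phi_x\)-term does vanish as you say. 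But the term containing \(\partial_x q_\eta\) cannot be controlled with the estimates this paper provides. The uniform \(\sTV\) bounds of \cref{theo:TV} hold for \(E_\eta\) and \(W_\eta\) only; there is no uniform-in-\(\eta\) bound on \(|q_\eta(t,\cdot)|_{\sTV(\R)}\) (the standard a priori bounds grow like \(\exp(Ct\,c_\eta\|\gamma_\eta'\|)\), which blows up). Your factor \(V(q_\eta)-V(W_\eta)\) is \(\mathcal O(\eta)\) only in \(\sL^1\) (via \(\|q_\eta-E_\eta\|_{\sL^1}=\nu|E_\eta|_{\sTV}\) and \cref{lem:W_E_L_infty_estimate}), \emph{not} in \(\sL^\infty\), since \(q_\eta-E_\eta=-\nu\partial_xE_\eta\) is only \(\sL^1\)-small; so pairing it with the uncontrolled \(\partial_x q_\eta\) gives nothing. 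Your fallback — integrate by parts to move the derivative off \(q_\eta\) — replaces the problem by \(\iint\Phi(q_\eta)\,V'(W_\eta)\,\partial_xW_\eta\,\phi\) with \(\Phi'(u)=u\alpha''(u)\): a product of a bounded, strongly-\(\sL^1\)-convergent sequence with a sequence of derivatives converging only weakly-* as measures. Identifying the limit (let alone its sign) of such a weak–strong product at a shock of \(q^*\) is precisely the content of the entropy condition, so asserting that "the surviving principal term is non-negative" begs the question.

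The paper sidesteps this entirely by never differentiating \(q_\eta\): it writes \(\mathcal E[\alpha,\phi,E_\eta]\) and substitutes the surrogate transport equation of \cref{lem:surrogate_dynamics_E}, so that the only spatial derivative appearing is \(\partial_xE_\eta\), which \emph{is} uniformly \(\sTV\)-bounded. After integrations by parts the entropy production splits into \(\mathcal O(\nu|E_\eta|_{\sTV})\) remainders plus the single signed term \eqref{eq:42_3}, whose nonnegativity follows pointwise from \(q_\eta\ge0\), \(\phi\ge0\) and the explicit two-point inequality \(\mathcal H(a,b)=\Psi(a)-\Psi(b)-V(a)(\alpha'(a)-\alpha'(b))\ge0\) (using \(V'\le0\), \(\alpha''\ge0\)); entropy admissibility of \(q^*\) then follows from \(E_\eta\to q^*\) and continuity of \(\mathcal E\). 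To repair your route you would either need a quantitative bound \(|q_\eta|_{\sTV}=\mathcal O(\eta^{-1})\) (not established here and not obviously true), or you should switch the argument to \(E_\eta\) as the paper does.
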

\begin{proof}
We follow \cite{colombo2023nonlocal} and show the entropy admissibility for \(E\). To this end, let \((\alpha,\beta)\in \sC^{2}(\R)\times \sC^{2}(\R)\) as in \cref{defi:entropy} be entropy flux pairs such that
\begin{equation}
\alpha''\geqq 0\ \wedge\ \beta'\equiv \alpha'(\cdot)\big(V(\cdot)+(\cdot) V'(\cdot)\big).\label{eq:entropy_flux_pair}
\end{equation}
Let \(\phi\in C^{\infty}_{\text{c}}(\R;\R_{\geq0})\) be given. We need to show that (see \cref{defi:entropy})
\begin{equation}
\mathcal{E}[\alpha,\phi,E_{\eta}]\coloneqq \int_{\OT}\alpha(E_{\eta})\phi_{t}+\beta(E_{\eta})\phi_{x}\dd x\dd t+\int_{\R}\alpha(E_{\eta}(0,x))\phi(0,x)\dd x\geq 0.\label{eq:entropy}
\end{equation}
As \(q\mapsto \mathcal{E}[\alpha,\phi,q]\) is continuous for \(q\in \sC([0,T];\sL^{1}_{\loc}(\R))\cap\sL^{\infty}((0,T);\sL^{\infty}(\R))\), we can invoke \cref{theo:stability} and can prove the inequality for sufficiently smooth \(E_{\eta}\). To this end, manipulate as follows using the abbreviation \(\nu \coloneqq -\eta \gamma'(0)^{-1}\)
\begin{align}
    &\mathcal{E}[\alpha,\phi,E_{\eta}]\notag\\
    &=-\int_{\OT}\alpha'(E_{\eta})\partial_{t}E_{\eta}\phi+\beta'(E_{\eta})\partial_{x}E_{\eta}\phi\dd x\dd t\notag\\
    \intertext{using \cref{eq:entropy_flux_pair} on \(\beta'\)}
    &=-\int_{\OT}\alpha'(E_{\eta})\phi\Big(\partial_{t}E_{\eta}+\big(V(E_{\eta})+E_{\eta}V'(E_{\eta})\big)\partial_{x}E_{\eta}\Big)\dd x\dd t\notag
    \intertext{and inserting the surrogate identity in \cref{lem:surrogate_dynamics_E}}
    &=-\int_{\OT}\alpha'(E_{\eta})\phi\bigg(V(E_{\eta})\partial_{x}E_{\eta}+V'(E_{\eta})E_{\eta}\partial_{x}E_{\eta}-V(W_{\eta})\partial_{x}E_{\eta}+\tfrac{1}{\nu}V(W_{\eta})E_{\eta}\notag\\
    &\qquad -\tfrac{1}{\nu^{2}}\int_{x}^{\infty}\e^{\frac{x-y}{\nu}}V(W(t,y))\big(E_{\eta}(t,y)-\nu\partial_{y}E_{\eta}(t,y)\big)\dd y\bigg)\dd x\notag\\
    &=\int_{\OT}\!\!\!\!\!\alpha'(E_{\eta})\phi\tfrac{\dd}{\dd x}\Big(-V(E_{\eta})E_{\eta}+\tfrac{1}{\nu}\!\!\int_{x}^{\infty}\!\!\!\!\!\e^{\frac{x-y}{\nu}}V(W(t,y))\big(E_{\eta}(t,y)-\nu\partial_{y}E_{\eta}(t,y)\big)\dd y\Big)\dd x\notag\\
    &=\int_{\OT}\!\!\!\!\!\partial_{x}\big(\alpha'(E_{\eta})\phi\big)\bigg(V(E_{\eta})E_{\eta}-\tfrac{1}{\nu}\!\!\int_{x}^{\infty}\!\!\!\!\!\e^{\frac{x-y}{\nu}}V(W(t,y))\big(E_{\eta}(t,y)-\nu\partial_{y}E_{\eta}(t,y)\big)\dd y\bigg)\dd x\notag
    \intertext{and as according to \cref{eq:E_E_x_q} we have \(q\equiv E_{\eta}-\nu\partial_{x}E_{\eta}\)}
    &=\int_{\OT}\!\!\!\!\!\partial_{x}\big(\alpha'(E_{\eta})\phi\big)\bigg(V(E_{\eta})E_{\eta}-\tfrac{1}{\nu}\int_{x}^{\infty}\e^{\frac{x-y}{\nu}}V(W(t,y))q(t,y)\dd y\bigg)\dd x\notag\\
    &= \iint_\OT \Big(V(E_{\eta})E_{\eta}  -  \tfrac{1}{\nu}\int_{x}^\infty \e^{\frac{x-y}{\nu}}V(W_\eta(t,y))q(t,y) \dd y \Big)\partial_x \big(\alpha'(E_{\eta})\phi\big)  \dd x \dd t \notag\\
    &= \iint_\OT \Big(V(E_{\eta})E_{\eta}  -  \tfrac{1}{\nu}\int_{x}^\infty \e^{\frac{x-y}{\nu}}V(W_\eta(t,y))q(t,y) \dd y \Big)\alpha'(E_{\eta})\phi_x   \dd x \dd t  \label{eq:entropy21}\\
    &\qquad + \iint_\OT \Big(V(E_{\eta})E_{\eta}  -  \tfrac{1}{\nu}\int_{x}^\infty \e^{\frac{x-y}{\nu}}V(W_\eta(t,y))q(t,y) \dd y \Big)\partial_x \big(\alpha'(E_{\eta})\big) \phi  \dd x \dd t. \label{eq:entropy22} 
\end{align}
\Cref{eq:entropy21} converges to zero for \(\eta\rightarrow 0\), as we will detail in the following.
It holds for \((t,x)\in\OT\)
\begin{align*}
    &V(E_{\eta}(t,x))E_{\eta}(t,x)-\tfrac{1}{\nu}\int_{x}^{\infty}\e^{\frac{x-y}{\nu}}V(W_{\eta}(t,y))q(t,y)\dd y\\
    &= V(E_{\eta}(t,x))E_{\eta}(t,x)-\tfrac{1}{\nu}\int_{x}^{\infty}\e^{\frac{x-y}{\nu}}V(E_{\eta}(t,y))q(t,y)\dd y\\
    &\quad +\tfrac{1}{\nu}\int_{x}^{\infty}\e^{\frac{x-y}{\nu}}\big(V(E_{\eta}(t,y))-V(W_{\eta}(t,y))\big)q(t,y)\dd y\\
    &= V(E_{\eta}(t,x))E_{\eta}(t,x)-\tfrac{1}{\nu}\int_{x}^{\infty}\e^{\frac{x-y}{\nu}}V(E_{\eta}(t,y))E_{\eta}(t,y)\dd y\\
    &\quad +\int_{x}^{\infty}\e^{\frac{x-y}{\nu}}V(E_{\eta}(t,y))E_{\nu,y}(t,y)\dd y\\
    &\quad +\tfrac{1}{\nu}\int_{x}^{\infty}\e^{\frac{x-y}{\nu}}\big(V(E_{\eta}(t,y))-V(W_{\eta}(t,y))\big)q(t,y)\dd y\\
    \intertext{and an integration by parts in the third term}
    &=-\int_{x}^{\infty}\e^{\frac{x-y}{\nu}}V'(E_{\eta}(t,y))E_{\eta}E_{\nu,y}(t,y)\dd y\\
    &\quad +\tfrac{1}{\nu}\int_{x}^{\infty}\e^{\frac{x-y}{\nu}}\big(V(E_{\eta}(t,y))-V(W_{\eta}(t,y))\big)q(t,y)\dd y.
    \intertext{Thus, we have when introducing \(\mathcal{Q}=\big(0,2\|q_{0}\|_{\sL^{\infty}(\R)}\big)\) and estimating \(\|E_{\eta}\|_{\sL^{\infty}(\OT)}\leq 2\|q_{0}\|_{\sL^{\infty}(\R)}\) which is according to \cref{theo:max} possible}
    \eqref{eq:entropy21}&\leq \|\alpha'\|_{\sL^{\infty}(\mathcal{Q})}\|\phi_{x}\|_{\sL^{\infty}(\OT)}\Big\|V(E_{\eta})E_{\eta}\!-\!\tfrac{1}{\nu}\!\int_{\cdot}^{\infty}\!\!\!\!\e^{\frac{\cdot-y}{\nu}}V(W_{\eta}(t,y))q(t,y)\dd y\Big\|_{\sL^{1}(\supp(\phi))}\\
     &\leq \|\alpha'\|_{\sL^{\infty}(\mathcal{Q})}\|\phi_{x}\|_{\sL^{\infty}(\OT)}\bigg(\Big\|\int_{\cdot}^{\infty}\e^{\frac{\cdot-y}{\nu}}V'(E_{\eta}(t,y))E_{\eta}E_{\nu,y}(t,y)\dd y\Big\|_{\sL^{1}(\OT)}\\
     &\qquad \qquad +\Big\|\tfrac{1}{\nu}\!\!\!\int_{\cdot}^{\infty}\!\!\!\!\e^{\frac{\cdot-y}{\nu}}\big(V(E_{\eta}(t,y))-V(W_{\eta}(t,y))\big)q(t,y)\dd y\Big\|_{\sL^{1}(\supp(\phi))} \bigg)\\
     &\leq \nu\|\alpha'\|_{\sL^{\infty}(\mathcal{Q})}\|\phi_{x}\|_{\sL^{\infty}(\OT)}\|V'\|_{\sL^{\infty}(\mathcal{Q})}2\|q_{0}\|_{\sL^{\infty}(\R)}|E_{\eta}|_{\sL^{1}((0,T);\sTV(\R))}\\
     &\quad +\|\alpha'\|_{\sL^{\infty}(\mathcal{Q})}\|\phi_{x}\|_{\sL^{\infty}(\OT)}\|V'\|_{\sL^{\infty}(\mathcal{Q})}2\|q_{0}\|_{\sL^{\infty}(\R)}\|E_{\eta}\!-\!W_{\eta}\|_{\sL^{\infty}((0,T);\sL^{\infty}(\R))}|\supp(\phi)|.
\end{align*}
The first term converges to zero for \(\eta\rightarrow 0\) recalling the uniform \(\sTV\) bounds on \(E_{\eta}\) in \cref{theo:TV}, the second term converges to zero according to \cref{lem:W_E_L_infty_estimate}.

We continue with \cref{eq:entropy22} for which we need to show that it is nonnegative. We obtain
\begin{align}
\eqref{eq:entropy22}&=\iint_\OT\!\! \Big(V(E_{\eta})E_{\eta}  -  \tfrac{1}{\nu}\int_{x}^\infty\!\! \e^{\frac{x-y}{\nu}}V(W_\eta(t,y))q(t,y) \dd y \Big)\partial_x \alpha'(E_{\eta}) \phi  \dd x \dd t \notag\\
&=\tfrac{1}{\nu}\!\!\! \iint_\OT \!\int_{x}^\infty \!\!\! \e^{\frac{x-y}{\nu}} \big(V(E_{\eta}(t,x))  -   V(W_\eta(t,y))  \big)q(t,y)\dd y\, \phi\partial_x \alpha'(E_{\eta})   \dd x \dd t \notag\\
&= \tfrac{1}{\nu}\iint_\OT q(t,y)\int_{-\infty}^y \e^{\frac{x-y}{\nu}} V(E_{\eta}(t,x)) \phi(t,x) \partial_x \alpha'(E_{\eta}(t,x))   \dd x \dd y\dd t \notag\\
&\quad -\tfrac{1}{\nu}\iint_\OT  V(W_\eta(t,y))  q(t,y)\int_{-\infty}^y \e^{\frac{x-y}{\nu}}  \phi(t,x) \partial_x \alpha'(E_{\eta}(t,x))   \dd x \dd y\dd t \notag
\intertext{with $\Psi(u) \coloneqq \int_0^u \alpha''(s)V(s)\dd s$ we obtain}
&= \tfrac{1}{\nu}\iint_\OT q(t,y)\int_{-\infty}^y \e^{\frac{x-y}{\nu}}  \phi(t,x) \partial_x \Psi(E_{\eta}(t,x))   \dd x \dd y\dd t \label{eq:psi}\\
&\quad -\tfrac{1}{\nu}\iint_\OT  V(W_\eta(t,y))  q(t,y)\int_{-\infty}^y \e^{\frac{x-y}{\nu}}  \phi(t,x) \partial_x \alpha'(E_{\eta}(t,x))   \dd x \dd y\dd t \notag
\intertext{integration by parts in both terms yields}
&=  \tfrac{1}{\nu}\iint_\OT q(t,y)\int_{-\infty}^y \e^{\frac{x-y}{\nu}}   \phi_x(t,x) \big(\Psi(E_{\eta}(t,y)) - \Psi(E_{\eta}(t,x))   \dd x \dd y\dd t \notag\\
&\quad +\tfrac{1}{\nu^2}\iint_\OT q(t,y)\int_{-\infty}^y \e^{\frac{x-y}{\nu}}   \phi(t,x) \big(\Psi(E_{\eta}(t,y)) - \Psi(E_{\eta}(t,x))   \dd x \dd y\dd t\notag \\
&\quad +\tfrac{1}{\nu}\iint_\OT  V(W_\eta)  q\int_{-\infty}^y \e^{\frac{x-y}{\nu}}  \phi_x(t,x)\big( \alpha'(E_{\eta}(t,y)) - \alpha'(E_{\eta}(t,x)) \big)  \dd x \dd y\dd t\notag\\
&\quad +\tfrac{1}{\nu^2}\iint_\OT  V(W_\eta)  q\int_{-\infty}^y \e^{\frac{x-y}{\nu}}  \phi(t,x)\big( \alpha'(E_{\eta}(t,y)) - \alpha'(E_{\eta}(t,x)) \big)  \dd x \dd y\dd t\notag\\
&= \tfrac{1}{\nu}\iint_\OT q(t,y)\int_{-\infty}^y \e^{\frac{x-y}{\nu}}   \phi_x(t,x) \big(\Psi(E_{\eta}(t,y)) - \Psi(E_{\eta}(t,x)) \big)  \dd x \dd y\dd t \label{eq:42_1}\\
&\quad +\tfrac{1}{\nu}\!\!\iint_\OT\!\!\!  V(W_\eta)  q\int_{-\infty}^y \e^{\frac{x-y}{\nu}}  \phi_x(t,x)\big( \alpha'(E_{\eta}(t,y)) - \alpha'(E_{\eta}(t,x)) \big)  \dd x \dd y\dd t\label{eq:42_2}\\
&\quad +\tfrac{1}{\nu^2}\iint_\OT  q(t,y)\int_{-\infty}^y \e^{\frac{x-y}{\nu}}  \phi(t,x)\mathcal H\big(E_{\eta}(t,y),E_{\eta}(t,x)\big) \dd x \dd y\dd t\label{eq:42_3}\\
&\quad +\!\tfrac{1}{\nu^2}\!\!\!    \iint_\OT\!\!\!\!\!  \big(V(W_\eta)-V(E_{\eta}) \big)  q\!\!\!\int_{-\infty}^y \!\!\!\!\!\!\e^{\frac{x-y}{\nu}}  \phi(t,x)\big( \alpha'(E_{\eta}(t,y)) - \alpha'(E_{\eta}(t,x)) \big)  \dd x \dd y\dd t\label{eq:42_4}
\end{align}
with 
\[
\mathcal H:\begin{cases}\R^{2}&\rightarrow\R\\
(a,b)&\mapsto  \Psi(a)-\Psi(b) - V(a)(\alpha'(a)-\alpha'(b)),
\end{cases}
\]
with \(\Psi\) before \cref{eq:psi}.
\cref{eq:42_3} is nonnegative (see \cite[p.18]{colombo2023nonlocal}), the rather standard argument consists of computing the derivative with regard to \(b\) and using that,  \(\mathcal H(a,a) = 0 \ \forall a \in \R, V'\leqq 0\) and \(\alpha''\geq 0\), in equations
\begin{align*}
    \partial_b \mathcal H(a,b) &= \alpha''(b) \big( V(a)-V(b) \big)  \begin{cases}
        \leq 0 & b < a \\
        \geq 0 & b > a
    \end{cases}
\end{align*}
consequently, \(\mathcal H(a,b)\) is monotonically decreasing w.r.t. \(b\) until \(b=a\), and monotonically increasing beyond, i.e. for \(b>a\).

Thus, we need to show only that \crefrange{eq:42_1}{eq:42_2} and \cref{eq:42_4} converge to zero for \(\eta\rightarrow 0\)---keeping in mind that \(\nu=\tfrac{-\eta}{\gamma'(0)}\)---as in \cref{theo:TV}.
Thus, we manipulate as follows
\begin{align*}
    |\eqref{eq:42_1}|&\leq 2\|q_{0}\|_{\sL^{\infty}(\R)}\|\phi_{x}\|_{\sL^{\infty}(\OT)}\|\Psi'\|_{\sL^{\infty}(\mathcal{Q})}\tfrac{1}{\nu}\iint_{\OT}\int_{-\infty}^{y}\!\!\!\!\e^{\frac{x-y}{\nu}}|E_{\eta}(t,y)-E_{\eta}(t,x)|\dd x\dd y\dd y\\
    &=2\|q_{0}\|_{\sL^{\infty}(\R)}\|\phi_{x}\|_{\sL^{\infty}(\OT)}\|\Psi'\|_{\sL^{\infty}(\mathcal{Q})}\tfrac{1}{\nu}\iint_{\OT}\int_{-\infty}^{0}\!\!\!\!\!\!\!\e^{\frac{z}{\nu}}|E_{\eta}(t,y)-E_{\eta}(t,z+y)|\dd z\dd y\dd y\\
    &= 2\|q_{0}\|_{\sL^{\infty}(\R)}\|\phi_{x}\|_{\sL^{\infty}(\OT)}\|\Psi'\|_{\sL^{\infty}(\mathcal{Q})}\tfrac{1}{\nu}\int_{0}^{T}\!\!\int_{\R_{\leq 0}}\!\!\!\!\!\!\e^{\frac{z}{\nu}}\!\!\int_{\R}|E_{\eta}(t,y)-E_{\eta}(t,z+y)|\dd y\dd z\dd t
    \intertext{where we use a classical characterization of \(TV\) functions, namely \cite[Theorem 13.48]{leoni}}
    &\leq 2\|q_{0}\|_{\sL^{\infty}(\R)}\|\phi_{x}\|_{\sL^{\infty}(\OT)}\|\Psi'\|_{\sL^{\infty}(\mathcal{Q})}\tfrac{1}{\nu}\int_{0}^{T}|E_{\eta}(t,\cdot)|_{\sTV(\R)}\int_{\R_{< 0}}(-z)\e^{\frac{z}{\nu}}\dd z\dd t\\
    &=2\nu\|q_{0}\|_{\sL^{\infty}(\R)}\|\phi_{x}\|_{\sL^{\infty}(\OT)}\|\Psi'\|_{\sL^{\infty}(\mathcal{Q})}T|E_{\eta}|_{\sL^{\infty}((0,T);\sTV(\R))}   \overset{\eta\rightarrow 0}{\longrightarrow} 0.
\end{align*}
This last convergence to zero becomes clear when recalling that all terms are bounded and invariant with regard to \(\eta\) except for \(\eta \gamma'(0)^{-1}|E_{\eta}|_{\sL^{\infty}((0,T);\sTV(\R))}\). Thanks to \cref{theo:TV}, this last term, however, also converges to zero for \(\eta\rightarrow 0\) concluding the claim on that \cref{eq:42_1} converges to zero for \(\eta\rightarrow 0\). 

Almost the same argumentation can be made for the term \eqref{eq:42_2}, we do not detail it further. 
Eventually, we look at the last remaining term, \eqref{eq:42_4}, and manipulate as follows
\begin{align*}
    |\eqref{eq:42_4}|&=\Big|\tfrac{1}{\nu}\iint_\OT  \big(V(W_\eta(t,y))-V(E_{\eta}(t,y)) \big)  q(t,y)\\
    &\qquad\qquad \cdot\int_{-\infty}^y \e^{\frac{x-y}{\nu}}  \partial_{x}\phi(t,x)\big( \alpha'(E_{\eta}(t,y)) - \alpha'(E_{\eta}(t,x)) \big)  \dd x \dd y\dd t\Big|\\
    &\quad +\Big|\tfrac{1}{\nu}\iint_\OT  \big(V(W_\eta(t,y))-V(E_{\eta}(t,y)) \big)  q(t,y)\\
    &\qquad\qquad\cdot\int_{-\infty}^y \e^{\frac{x-y}{\nu}}  \phi(t,x)\alpha''(E_{\eta}(t,x))\partial_{x}E_{\eta}(t,x) \dd x \dd y\dd t\Big|\\
    &\leq \|V'\|_{\sL^{\infty}((0,2\|q_{0}\|_{\sL^{\infty}(\R)}))}\|W_{\eta}(t,\cdot)-E_{\eta}(t,\cdot)\|_{\sL^{\infty}(\R)}4\|q_{0}\|_{\sL^{\infty}(\R)}\\
    &\qquad\qquad \cdot\|\alpha'\|_{\sL^{\infty}((0,2\|q_{0}\|_{\sL^{\infty}(\R)}))}\tfrac{1}{\nu}\iint_{\OT}\int_{-\infty}^{y}\e^{\frac{x-y}{\nu}} |\partial_{2}\phi(t,x)|\dd x \dd y\dd t\\
    &\quad +\|V'\|_{\sL^{\infty}((0,2\|q_{0}\|_{\sL^{\infty}(\R)}))}\|W_{\eta}(t,\cdot)-E_{\eta}(t,\cdot)\|_{\sL^{\infty}(\R)}2\|q_{0}\|_{\sL^{\infty}(\R)}\\
    &\qquad \qquad \cdot\|\phi\|_{\sL^{\infty}(\OT)}\tfrac{1}{\nu}\iint_{\OT}\int_{-\infty}^{y}\e^{\frac{x-y}{\nu}}  \alpha''(E_{\eta}(t,x))|\partial_{x}E_{\eta}(t,x)| \dd x \dd y\dd t\\
    &\leq \|V'\|_{\sL^{\infty}((0,2\|q_{0}\|_{\sL^{\infty}(\R)}))}\|W_{\eta}(t,\cdot)-E_{\eta}(t,\cdot)\|_{\sL^{\infty}(\R)}4\|q_{0}\|_{\sL^{\infty}(\R)}\\
    &\qquad \qquad \cdot\|\alpha'\|_{\sL^{\infty}((0,2\|q_{0}\|_{\sL^{\infty}(\R)}))}\tfrac{1}{\nu}\iint_{\OT}|\partial_{2}\phi(t,x)|\int_{-\infty}^{y}\e^{\frac{x-y}{\nu}} \dd y \dd x\dd t\\
    &\quad +\|V'\|_{\sL^{\infty}((0,2\|q_{0}\|_{\sL^{\infty}(\R)}))}\|W_{\eta}(t,\cdot)-E_{\eta}(t,\cdot)\|_{\sL^{\infty}(\R)}2\|q_{0}\|_{\sL^{\infty}(\R)}\\
    &\qquad\qquad \cdot\|\phi\|_{\sL^{\infty}(\OT)}\|\alpha''\|_{\sL^{\infty}((0,2\|q_{0}\|_{\sL^{\infty}(\R)}))}\tfrac{1}{\nu}\iint_{\OT}|\partial_{x}E_{\eta}(t,x)\int_{x}^{\infty}\e^{\frac{x-y}{\nu}}   \dd y \dd x\dd t\\
    &\leq \|V'\|_{\sL^{\infty}((0,2\|q_{0}\|_{\sL^{\infty}(\R)}))}\|W_{\eta}(t,\cdot)-E_{\eta}(t,\cdot)\|_{\sL^{\infty}(\R)}4\|q_{0}\|_{\sL^{\infty}(\R)}\|\partial_{2}\phi\|_{\sL^{1}(\supp(\phi))}\\
    &\quad +\|V'\|_{\sL^{\infty}((0,2\|q_{0}\|_{\sL^{\infty}(\R)}))}\|W_{\eta}(t,\cdot)-E_{\eta}(t,\cdot)\|_{\sL^{\infty}(\R)}2\|q_{0}\|_{\sL^{\infty}(\R)} \\
    &\qquad\qquad \cdot \|\phi\|_{\sL^{\infty}(\OT)}\|\alpha''\|_{\sL^{\infty}((0,2\|q_{0}\|_{\sL^{\infty}(\R))})}T|E_{\eta}(t,\cdot)|_{\sL^{\infty}((0,T);\sTV(\R))}.
\end{align*}
As can be seen, all previous terms are bounded uniformly in \(\eta\) respectively except for
\[
\|W_{\eta}(t,\cdot)-E_{\eta}(t,\cdot)\|_{\sL^{\infty}(\R)}\ \text{ and }\ |E_{\eta}(t,\cdot)|_{\sL^{\infty}((0,T);\sTV(\R))},\ t\in[0,T].
\]
However, the first term converges to zero according to \cref{lem:W_E_L_infty_estimate}, and the second term is uniformly bounded thanks to \cref{theo:TV}. Thus, \(|\eqref{eq:42_4}|\) converges to zero, concluding the proof.
\end{proof}
The following final and main theorem summarizes the obtained results:
\begin{theorem}[Convergence to the local Entropy solution]
    Let \cref{ass:initialdatum_velocity} and \cref{ass:gamma} hold and let \(q_{\eta}\in \sC\big([0,T];\sL^{1}_{\loc}(\R)\big)\) be the unique weak solution to \cref{defi:nonlocal_conservation_law}. Then, it holds (for every sequence) \(\{\eta_{k}\}_{k\in\N}\subset\R_{>0},\ \lim_{k\rightarrow\infty}\eta_{k}=0\) that
    \begin{align*}
        \lim_{k\rightarrow\infty}\big\|q_{\eta_{k}}-q^{*}\big\|_{\sC([0,T];\sL^{1}_{\loc}(\R))}&=0\\
        \lim_{k\rightarrow\infty}\big\|W[q_{\eta_k},\gamma_{\eta_k}]-q^{*}\big\|_{\sC([0,T];\sL^{1}_{\loc}(\R))}&=0\\
        \lim_{k\rightarrow\infty}\big\|E_{\eta_k}-q^{*}\big\|_{\sC([0,T];\sL^{1}_{\loc}(\R))}&=0
    \end{align*}
    and \(q^{*}\in\sC\big([0,T];\sL^{1}_{\loc}(\R)\big)\) is the unique entropy solution as in \cref{defi:entropy} to the local conservation law \cref{defi:local_conservation_law}. In other words, the solution to the nonlocal conservation law in \cref{defi:nonlocal_conservation_law}, the nonlocal quantity \(W_{\eta}\) and the surrogate \(E_{\eta}\) as in \cref{theo:strong_convergence} converge strongly in \(\sC(\sL^{1}_{\loc})\) to the corresponding local entropy solutions.
\end{theorem}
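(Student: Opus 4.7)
The plan is to assemble the final statement from the three pillars already established in the preceding sections: the compactness result (\cref{theo:strong_convergence}) providing a subsequential limit common to $E_\eta$, $W_\eta$, and $q_\eta$; the entropy identification result (\cref{theo:convergence}) showing that any such limit is a weak entropy solution; and the uniqueness of entropy solutions (\cref{theo:existence_uniqueness_local}), which pins down the limit uniquely and, via a standard subsequence-of-subsequence argument, upgrades the convergence from subsequential to full-sequence.

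First, I would fix an arbitrary sequence $\{\eta_k\}_{k\in\N}\subset\R_{>0}$ with $\eta_k\to 0$. By \cref{theo:strong_convergence}, the families $\{E_{\eta_k}\}$ and $\{W_{\eta_k}\}$ are relatively compact in $\sC([0,T];\sL^1_{\loc}(\R))$, so there exist a subsequence $\{\eta_{k_j}\}_{j\in\N}$ and a function $q^*\in \sC([0,T];\sL^1_{\loc}(\R))$ such that $E_{\eta_{k_j}}\to q^*$ and $W_{\eta_{k_j}}\to q^*$ in $\sC([0,T];\sL^1_{\loc}(\R))$. The identity $q_\eta \equiv E_\eta + \nu\partial_x E_\eta$ from \cref{eq:E_E_x_q}, combined with the uniform $\sTV$ estimate on $E_\eta$ of \cref{theo:TV} (which forces $\nu|E_\eta|_{\sTV}\to 0$), guarantees that $q_{\eta_{k_j}}\to q^*$ in the same topology. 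The additional regularity $q^*\in\sL^\infty((0,T);\sL^\infty(\R))$ is inherited from the weakened maximum principle \cref{theo:max}.

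Next, I would apply \cref{theo:convergence} to the subsequence: its hypothesis is exactly the convergence $E_{\eta_{k_j}}\to q^*$ in $\sC([0,T];\sL^1_{\loc}(\R))$ just established, so $q^*$ is a weak entropy solution of \cref{defi:local_conservation_law}. By the uniqueness of entropy solutions stated in \cref{theo:existence_uniqueness_local}, the limit $q^*$ is therefore independent of the extraction.

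Finally, I would upgrade subsequential convergence to convergence of the entire sequence by the classical argument: given any subsequence of $\{\eta_k\}$, the preceding reasoning extracts a further subsequence along which $E$, $W$, and $q$ all converge to the same $q^*$; hence each of the three sequences must itself converge to $q^*$ in $\sC([0,T];\sL^1_{\loc}(\R))$. There is no substantive obstacle here, since the heavy lifting, namely the uniform $\sTV$ bound on the surrogate, the $\sL^\infty$ proximity of $W_\eta$ and $E_\eta$, and the entropy admissibility in the limit, has been done in \cref{sec:TV_uniform} and \cref{sec:convergence}; the only care needed is to keep the three convergences coupled through the same limit $q^*$, which is automatic from the identity relating $q_\eta$ to $E_\eta$ and from \cref{lem:W_E_L_infty_estimate}.
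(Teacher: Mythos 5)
Your proposal is correct and follows essentially the same route as the paper: combine the compactness/convergence result of \cref{theo:strong_convergence} with the entropy identification of \cref{theo:convergence}, then invoke the uniqueness of entropy solutions from \cref{theo:existence_uniqueness_local} and the standard subsequence-of-subsequences argument to pass from subsequential to full-sequence convergence. The paper's own proof is just a terse two-sentence version of exactly this assembly, so your more detailed write-up adds no new ideas but also introduces no errors.
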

\begin{proof}
    The proof for subsequences consists of combining \cref{theo:strong_convergence} and \cref{theo:convergence}. The convergence on any sequence \(\eta\rightarrow 0\) follows by the uniqueness of the local entropy solution as stated in \cref{theo:existence_uniqueness_local}.
\end{proof}

\begin{figure}
    \centering
    \pgfplotstableread{data/convergence1_eta1.txt}{\etaAm}
    \pgfplotstableread{data/convergence1_eta2.txt}{\etaBm}
    \pgfplotstableread{data/convergence1_eta3.txt}{\etaCm}
    \pgfplotstableread{data/convergence1_eta4.txt}{\etaDm}
    \pgfplotstableread{data/convergence1_eta5.txt}{\etaEm}
    \pgfplotstableread{data/convergence1_eta6.txt}{\etaFm}

    \pgfplotstableread{data/convergence2_eta1.txt}{\etaAe}
    \pgfplotstableread{data/convergence2_eta2.txt}{\etaBe}
    \pgfplotstableread{data/convergence2_eta3.txt}{\etaCe}
    \pgfplotstableread{data/convergence2_eta4.txt}{\etaDe}
    \pgfplotstableread{data/convergence2_eta5.txt}{\etaEe}
    \pgfplotstableread{data/convergence2_eta6.txt}{\etaFe}
    
\begin{tikzpicture}
\begin{axis}[ymajorgrids,
grid style=dotted,height=5cm,width=6cm,
          xmin=-1,ymin=0,
          xmax=4,ymax=1.2,
          restrict y to domain=0:1.2,
          restrict x to domain=-1:4,
          xtick={-1,0,1,2,3,4},
          ytick={0.0,0.2,0.4,0.6,0.8,1.0,1.2},xlabel={$x$},
          ylabel={\(q_\eta(t,\cdot)\)},legend style={draw=none,fill=none,font=\plotfont,anchor=north west, at={(1,1)}},legend cell align={left},tick label style={font=\plotfont}, label style={font=\plotfont}]

\addplot[col1!0!col2] table [x = {x}, y = {q}] {\etaAm};
\addplot[col1!20!col2] table [x = {x}, y = {q}] {\etaBm};
\addplot[col1!40!col2] table [x = {x}, y = {q}] {\etaCm};
\addplot[col1!60!col2] table [x = {x}, y = {q}] {\etaDm};
\addplot[col1!80!col2] table [x = {x}, y = {q}] {\etaEm};
\addplot[col1!100!col2] table [x = {x}, y = {q}] {\etaFm};

\end{axis}
\end{tikzpicture}
\begin{tikzpicture}
\begin{axis}[ymajorgrids,
grid style=dotted,height=5cm,width=6cm,
          xmin=-1,ymin=0,
          xmax=4,ymax=1.2,
          restrict y to domain=0:1,
          restrict x to domain=-1:4,
          xtick={-1,0,1,2,3,4},
          ytick={0.0,0.2,0.4,0.6,0.8,1.0,1.2},xlabel={$x$},yticklabels={},legend style={draw=none,fill=none,font=\plotfont,anchor=north west, at={(1.1,0.9)}},legend cell align={left},tick label style={font=\plotfont}, label style={font=\plotfont}
          ]
\addplot[col1!0!col2] table [x = {x}, y = {q}] {\etaAe};
\addplot[col1!20!col2] table [x = {x}, y = {q}] {\etaBe};
\addplot[col1!40!col2] table [x = {x}, y = {q}] {\etaCe};
\addplot[col1!60!col2] table [x = {x}, y = {q}] {\etaDe};
\addplot[col1!80!col2] table [x = {x}, y = {q}] {\etaEe};
\addplot[col1!100!col2] table [x = {x}, y = {q}] {\etaFe};

\legend{$\eta = 2^{-1}$,$\eta = 2^{-2}$,$\eta = 2^{-3}$,$\eta = 2^{-4}$,$\eta = 2^{-5}$,$\eta = 2^{-6}$}
\end{axis}
\end{tikzpicture}
    \caption{Plot of the solution \(q\) at time \textbf{left} \(t=1\) and  \textbf{right} \(t=2\). The initial datum is chosen as $q_0 \equiv \chi_{(-1,-0.5)} + \chi_{(1,1.5)}$, the nonlocal kernel in a non-monotone and discontinuous variant as $\gamma \equiv (1-2\cdot) \chi_{(0,0.5)} + \tfrac{1}{2}\chi_{(0.5,2)}$ and the velocity \(V\equiv 1-\cdot\). One can clearly observe a) the violation (compare also \cref{fig:max_q}) the maximum principle for \(\eta=2^{-1}\) and \(t=1\) having in mind that the initial datum satisfies \(\|q_0\|_{\sL^\infty(\R)} \leq 1\) b) the convergence for \(\eta\rightarrow 0\), many of the kinks stemming from the discontinuous kernel.}
    \label{fig:sol}
\end{figure}
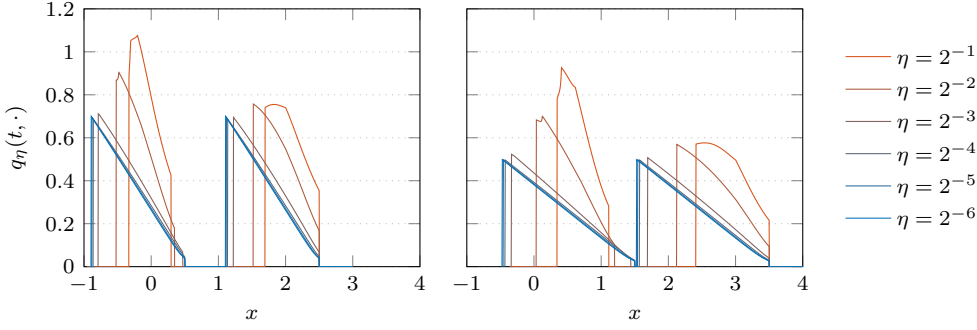

\section{Open problems}\label{sec:open_problems}
We have established a generic result for the singular limit problem in nonlocal (to local) conservation laws for a broad class of nonlocal kernels.
However, several open challenges remain: (\textbf{1}) The case of a constant kernel---which can be considered as the most simplistic nonlocal approximation---remains open.
(\textbf{2}) The kernel looking to one side only (into the direction of \(-\sgn(V')\sgn(q)\)) requires that there be no change in the sign of \(V'\) and initial datum. This restriction is inconvenient, as one would hope also to obtain nonlocal approximations (potentially in a very weak metric) of general local conservation laws (with the problems outlined in \cite{spinolo}). One could try to remedy this problem by considering two-sided kernels, but then there is no maximum principle and classical compactness results in \(\sL^{1}\) for the solution are not applicable (which is in line with \cite{spinolo}). Looking instead (as also done here) to surrogate quantities (like \(E\) in \cref{lem:surrogate_dynamics_E}) and/or weaker types of convergence could prove to be a solution to the raised problem.
(\textbf{3}) In a recent publication \cite{chiarello2023singular}, the singular limit problem for systems of conservation laws has been studied, but the coupling was only on the right-hand side, so  considering non-diagonal systems nonlocal approximations is an appropriate next step.
(\textbf{4}) A singular limit convergence for multi-$d$ nonlocal conservation laws as in \cite{spinola} is entirely open, one reason being that, in such a case, a maximum principle does not hold, and it is difficult to use nonlocal kernels which enable the solution to be expressed solely in terms of the nonlocal operators.
(\textbf{5}) A singular limit convergence on bounded domains \cite{pflug3,Colombo2018,goatin2019well} is open as well, as it is unclear how to obtain \(\sTV\) bounds even in the nonlocal term to satisfy as well the corresponding weak boundary conditions in the sense of \cite{Bardos1979}.
(\textbf{6}) The constant kernel case, i.e., \(\gamma_{\eta}\equiv\tfrac{1}{\eta}\chi_{(0,\eta)}\) resulting in 
\(W[q,\gamma_{\eta}]=\tfrac{1}{\eta}\int_{x}^{x+\eta}q(t,y)\dd y\)
is---for a general initial datum---still open  (though proven in \cite{pflug4} for a monotone initial datum), and our results do not apply to this case (as the power scaling of the constant kernel does not converge to a Dirac distribution).

\section*{Acknowledgments}
Lukas Pflug acknowledges the support of the Collaborative Research Centre 1411 “Design of Particulate Products” (Project-ID 416229255).

\bibliography{sn-biblio}


\begin{thebibliography}{35}
\ifx \bisbn   \undefined \def \bisbn  #1{ISBN #1}\fi
\ifx \binits  \undefined \def \binits#1{#1}\fi
\ifx \bauthor  \undefined \def \bauthor#1{#1}\fi
\ifx \batitle  \undefined \def \batitle#1{#1}\fi
\ifx \bjtitle  \undefined \def \bjtitle#1{#1}\fi
\ifx \bvolume  \undefined \def \bvolume#1{\textbf{#1}}\fi
\ifx \byear  \undefined \def \byear#1{#1}\fi
\ifx \bissue  \undefined \def \bissue#1{#1}\fi
\ifx \bfpage  \undefined \def \bfpage#1{#1}\fi
\ifx \blpage  \undefined \def \blpage #1{#1}\fi
\ifx \burl  \undefined \def \burl#1{\textsf{#1}}\fi
\ifx \doiurl  \undefined \def \doiurl#1{\url{https://doi.org/#1}}\fi
\ifx \betal  \undefined \def \betal{\textit{et al.}}\fi
\ifx \binstitute  \undefined \def \binstitute#1{#1}\fi
\ifx \binstitutionaled  \undefined \def \binstitutionaled#1{#1}\fi
\ifx \bctitle  \undefined \def \bctitle#1{#1}\fi
\ifx \beditor  \undefined \def \beditor#1{#1}\fi
\ifx \bpublisher  \undefined \def \bpublisher#1{#1}\fi
\ifx \bbtitle  \undefined \def \bbtitle#1{#1}\fi
\ifx \bedition  \undefined \def \bedition#1{#1}\fi
\ifx \bseriesno  \undefined \def \bseriesno#1{#1}\fi
\ifx \blocation  \undefined \def \blocation#1{#1}\fi
\ifx \bsertitle  \undefined \def \bsertitle#1{#1}\fi
\ifx \bsnm \undefined \def \bsnm#1{#1}\fi
\ifx \bsuffix \undefined \def \bsuffix#1{#1}\fi
\ifx \bparticle \undefined \def \bparticle#1{#1}\fi
\ifx \barticle \undefined \def \barticle#1{#1}\fi
\bibcommenthead
\ifx \bconfdate \undefined \def \bconfdate #1{#1}\fi
\ifx \botherref \undefined \def \botherref #1{#1}\fi
\ifx \url \undefined \def \url#1{\textsf{#1}}\fi
\ifx \bchapter \undefined \def \bchapter#1{#1}\fi
\ifx \bbook \undefined \def \bbook#1{#1}\fi
\ifx \bcomment \undefined \def \bcomment#1{#1}\fi
\ifx \oauthor \undefined \def \oauthor#1{#1}\fi
\ifx \citeauthoryear \undefined \def \citeauthoryear#1{#1}\fi
\ifx \endbibitem  \undefined \def \endbibitem {}\fi
\ifx \bconflocation  \undefined \def \bconflocation#1{#1}\fi
\ifx \arxivurl  \undefined \def \arxivurl#1{\textsf{#1}}\fi
\csname PreBibitemsHook\endcsname

\bibitem[\protect\citeauthoryear{Goatin and Scialanga}{2016}]{scialanga}
\begin{barticle}
\bauthor{\bsnm{Goatin}, \binits{P.}},
\bauthor{\bsnm{Scialanga}, \binits{S.}}:
\batitle{Well-posedness and finite volume approximations of the {LWR} traffic
  flow model with non-local velocity}.
\bjtitle{Networks and Hetereogeneous Media}
\bvolume{11}(\bissue{1}),
\bfpage{107}--\blpage{121}
(\byear{2016})
\end{barticle}
\endbibitem

\bibitem[\protect\citeauthoryear{Keimer and Pflug}{2017}]{pflug}
\begin{barticle}
\bauthor{\bsnm{Keimer}, \binits{A.}},
\bauthor{\bsnm{Pflug}, \binits{L.}}:
\batitle{Existence, uniqueness and regularity results on nonlocal balance
  laws}.
\bjtitle{Journal of Differential Equations}
\bvolume{263},
\bfpage{4023}--\blpage{4069}
(\byear{2017})
\end{barticle}
\endbibitem

\bibitem[\protect\citeauthoryear{Crippa and
  L{\'e}cureux-Mercier}{2013}]{crippa2013existence}
\begin{barticle}
\bauthor{\bsnm{Crippa}, \binits{G.}},
\bauthor{\bsnm{L{\'e}cureux-Mercier}, \binits{M.}}:
\batitle{Existence and uniqueness of measure solutions for a system of
  continuity equations with non-local flow}.
\bjtitle{Nonlinear Differential Equations and Applications NoDEA}
\bvolume{20}(\bissue{3}),
\bfpage{523}--\blpage{537}
(\byear{2013})
\end{barticle}
\endbibitem

\bibitem[\protect\citeauthoryear{Kloeden and Lorenz}{2016}]{kloeden}
\begin{barticle}
\bauthor{\bsnm{Kloeden}, \binits{P.E.}},
\bauthor{\bsnm{Lorenz}, \binits{T.}}:
\batitle{Nonlocal multi-scale traffic flow models: analysis beyond vector
  spaces}.
\bjtitle{Bulletin of Mathematical Sciences}
\bvolume{6}(\bissue{3}),
\bfpage{453}--\blpage{514}
(\byear{2016})
\end{barticle}
\endbibitem

\bibitem[\protect\citeauthoryear{Chiarello and Goatin}{2018}]{chiarello}
\begin{barticle}
\bauthor{\bsnm{Chiarello}, \binits{F.A.}},
\bauthor{\bsnm{Goatin}, \binits{P.}}:
\batitle{Global entropy weak solutions for general non-local traffic flow
  models with anisotropic kernel}.
\bjtitle{ESAIM: Math. Modelling and Numerical Analysis}
\bvolume{52}(\bissue{1}),
\bfpage{163}--\blpage{180}
(\byear{2018})
\end{barticle}
\endbibitem

\bibitem[\protect\citeauthoryear{Friedrich et~al.}{2018}]{friedrich2018godunov}
\begin{barticle}
\bauthor{\bsnm{Friedrich}, \binits{J.}},
\bauthor{\bsnm{Kolb}, \binits{O.}},
\bauthor{\bsnm{Göttlich}, \binits{S.}}:
\batitle{A {G}odunov type scheme for a class of {LWR} traffic flow models with
  non-local flux}.
\bjtitle{Networks \& Heterogeneous Media}
\bvolume{13},
\bfpage{531}
(\byear{2018})
\end{barticle}
\endbibitem

\bibitem[\protect\citeauthoryear{Amorim et~al.}{2015}]{teixeira}
\begin{barticle}
\bauthor{\bsnm{Amorim}, \binits{P.}},
\bauthor{\bsnm{Colombo}, \binits{R.M.}},
\bauthor{\bsnm{Teixeira}, \binits{A.}}:
\batitle{On the numerical integration of scalar nonlocal conservation laws}.
\bjtitle{ESAIM: Math. Modelling and Numerical Analysis}
\bvolume{49}(\bissue{1}),
\bfpage{19}--\blpage{37}
(\byear{2015})
\end{barticle}
\endbibitem

\bibitem[\protect\citeauthoryear{Keimer and Pflug}{2019}]{pflug4}
\begin{barticle}
\bauthor{\bsnm{Keimer}, \binits{A.}},
\bauthor{\bsnm{Pflug}, \binits{L.}}:
\batitle{On approximation of local conservation laws by nonlocal conservation
  laws}.
\bjtitle{Journal of Mathematical Analysis and Applications}
\bvolume{475}(\bissue{2}),
\bfpage{1927}--\blpage{1955}
(\byear{2019})
\end{barticle}
\endbibitem

\bibitem[\protect\citeauthoryear{Crippa et~al.}{2021}]{Crippa2021}
\begin{barticle}
\bauthor{\bsnm{Crippa}, \binits{G.}},
\bauthor{\bsnm{Marconi}, \binits{E.}},
\bauthor{\bsnm{Spinolo}, \binits{L.V.}},
\bauthor{\bsnm{Colombo}, \binits{M.}}:
\batitle{Local limit of nonlocal traffic models: Convergence results and total
  variation blow-up}.
\bjtitle{Annales de l{\textquotesingle}Institut Henri Poincar{\'{e}} C, Analyse
  non lin{\'{e}}aire}
\bvolume{38}(\bissue{5}),
\bfpage{1653}--\blpage{1666}
(\byear{2021})
\doiurl{10.1016/j.anihpc.2020.12.002}
\end{barticle}
\endbibitem

\bibitem[\protect\citeauthoryear{Colombo et~al.}{2019}]{spinolo}
\begin{barticle}
\bauthor{\bsnm{Colombo}, \binits{M.}},
\bauthor{\bsnm{Crippa}, \binits{G.}},
\bauthor{\bsnm{Spinolo}, \binits{L.V.}}:
\batitle{On the singular local limit for conservation laws with nonlocal
  fluxes}.
\bjtitle{Archive for Rational Mechanics and Analysis}
\bvolume{233}(\bissue{3}),
\bfpage{1131}--\blpage{1167}
(\byear{2019})
\doiurl{10.1007/s00205-019-01375-8}
\end{barticle}
\endbibitem

\bibitem[\protect\citeauthoryear{Bressan and Shen}{2020}]{bressan2019traffic}
\begin{botherref}
\oauthor{\bsnm{Bressan}, \binits{A.}},
\oauthor{\bsnm{Shen}, \binits{W.}}:
On traffic flow with nonlocal flux: a relaxation representation.
Archive for Rational Mechanics and Analysis volume
\textbf{237}
(2020)
\end{botherref}
\endbibitem

\bibitem[\protect\citeauthoryear{Bressan and Shen}{2021}]{bressan2021entropy}
\begin{barticle}
\bauthor{\bsnm{Bressan}, \binits{A.}},
\bauthor{\bsnm{Shen}, \binits{W.}}:
\batitle{Entropy admissibility of the limit solution for a nonlocal model of
  traffic flow}.
\bjtitle{Communications in Mathematical Sciences}
\bvolume{19}(\bissue{5}),
\bfpage{1447}--\blpage{1450}
(\byear{2021})
\end{barticle}
\endbibitem

\bibitem[\protect\citeauthoryear{Coclite et~al.}{2022}]{coclite2022general}
\begin{barticle}
\bauthor{\bsnm{Coclite}, \binits{G.M.}},
\bauthor{\bsnm{Coron}, \binits{J.-M.}},
\bauthor{\bsnm{Nitti}, \binits{N.D.}},
\bauthor{\bsnm{Keimer}, \binits{A.}},
\bauthor{\bsnm{Pflug}, \binits{L.}}:
\batitle{A general result on the approximation of local conservation laws by
  nonlocal conservation laws: The singular limit problem for exponential
  kernels}.
\bjtitle{Annales de l{\textquotesingle}Institut Henri Poincar{\'{e}} C, Analyse
  non lin{\'{e}}aire}
(\byear{2022})
\doiurl{10.4171/aihpc/58}
\end{barticle}
\endbibitem

\bibitem[\protect\citeauthoryear{Colombo et~al.}{2023}]{colombo2023nonlocal}
\begin{botherref}
\oauthor{\bsnm{Colombo}, \binits{M.}},
\oauthor{\bsnm{Crippa}, \binits{G.}},
\oauthor{\bsnm{Marconi}, \binits{E.}},
\oauthor{\bsnm{Spinolo}, \binits{L.V.}}:
Nonlocal traffic models with general kernels: Singular limit, entropy
  admissibility, and convergence rate.
Archive for Rational Mechanics and Analysis
\textbf{247}(2)
(2023)
\doiurl{10.1007/s00205-023-01845-0}
\end{botherref}
\endbibitem

\bibitem[\protect\citeauthoryear{Friedrich
  et~al.}{2023}]{friedrich2023conservation}
\begin{botherref}
\oauthor{\bsnm{Friedrich}, \binits{J.}},
\oauthor{\bsnm{Göttlich}, \binits{S.}},
\oauthor{\bsnm{Keimer}, \binits{A.}},
\oauthor{\bsnm{Pflug}, \binits{L.}}:
Conservation laws with nonlocality in density and velocity and their
  applicability in traffic flow modelling
(2023)
\end{botherref}
\endbibitem

\bibitem[\protect\citeauthoryear{Hajian et~al.}{2019}]{hajian2019total}
\begin{barticle}
\bauthor{\bsnm{Hajian}, \binits{S.}},
\bauthor{\bsnm{Hinterm{\"u}ller}, \binits{M.}},
\bauthor{\bsnm{Ulbrich}, \binits{S.}}:
\batitle{Total variation diminishing schemes in optimal control of scalar
  conservation laws}.
\bjtitle{IMA Journal of Numerical Analysis}
\bvolume{39}(\bissue{1}),
\bfpage{105}--\blpage{140}
(\byear{2019})
\end{barticle}
\endbibitem

\bibitem[\protect\citeauthoryear{Herty and Ulbrich}{2023}]{herty2023numerics}
\begin{bchapter}
\bauthor{\bsnm{Herty}, \binits{M.}},
\bauthor{\bsnm{Ulbrich}, \binits{S.}}:
\bctitle{Numerics and control of conservation laws}.
In: \beditor{\bsnm{Trélat}, \binits{E.}},
\beditor{\bsnm{Zuazua}, \binits{E.}} (eds.)
\bbtitle{Numerical Control: Part B}.
\bsertitle{Handbook of Numerical Analysis},
vol. \bseriesno{24},
pp. \bfpage{473}--\blpage{509}.
\bpublisher{Elsevier},
\blocation{Basel}
(\byear{2023}).
\bcomment{Chap. 13}.
\doiurl{10.1016/bs.hna.2022.11.004} .
\burl{https://www.sciencedirect.com/science/article/pii/S1570865922000242}
\end{bchapter}
\endbibitem

\bibitem[\protect\citeauthoryear{Bouchut and James}{1999}]{Bouchut1999}
\begin{bchapter}
\bauthor{\bsnm{Bouchut}, \binits{F.}},
\bauthor{\bsnm{James}, \binits{F.}}:
\bctitle{Differentiability with respect to initial data for a scalar
  conservation law}.
In: \bbtitle{Hyperbolic Problems: Theory, Numerics, Applications},
pp. \bfpage{113}--\blpage{118}.
\bpublisher{Birkh\"{a}user Basel},
\blocation{Basel}
(\byear{1999}).
\doiurl{10.1007/978-3-0348-8720-5_13} .
\burl{https://doi.org/10.1007/978-3-0348-8720-5_13}
\end{bchapter}
\endbibitem

\bibitem[\protect\citeauthoryear{Pfaff and Ulbrich}{2015}]{pfaff2015optimal}
\begin{barticle}
\bauthor{\bsnm{Pfaff}, \binits{S.}},
\bauthor{\bsnm{Ulbrich}, \binits{S.}}:
\batitle{Optimal boundary control of nonlinear hyperbolic conservation laws
  with switched boundary data}.
\bjtitle{SIAM Journal on Control and Optimization}
\bvolume{53}(\bissue{3}),
\bfpage{1250}--\blpage{1277}
(\byear{2015})
\end{barticle}
\endbibitem

\bibitem[\protect\citeauthoryear{Ulbrich}{2002}]{ulbrich2002sensitivity}
\begin{barticle}
\bauthor{\bsnm{Ulbrich}, \binits{S.}}:
\batitle{A sensitivity and adjoint calculus for discontinuous solutions of
  hyperbolic conservation laws with source terms}.
\bjtitle{SIAM Journal on Control and Optimization}
\bvolume{41}(\bissue{3}),
\bfpage{740}--\blpage{797}
(\byear{2002})
\end{barticle}
\endbibitem

\bibitem[\protect\citeauthoryear{Bressan}{2000}]{bressan}
\begin{bbook}
\bauthor{\bsnm{Bressan}, \binits{A.}}:
\bbtitle{{H}yperbolic {S}ystems of {C}onservation {L}aws},
p. \bfpage{250}.
\bpublisher{Oxford University Press},
\blocation{Oxford}
(\byear{2000})
\end{bbook}
\endbibitem

\bibitem[\protect\citeauthoryear{Eymard et~al.}{2000}]{eymard}
\begin{barticle}
\bauthor{\bsnm{Eymard}, \binits{R.}},
\bauthor{\bsnm{Gallou{\"e}t}, \binits{T.}},
\bauthor{\bsnm{Herbin}, \binits{R.}}:
\batitle{Finite volume methods}.
\bjtitle{Handbook of numerical analysis}
\bvolume{7},
\bfpage{713}--\blpage{1018}
(\byear{2000})
\end{barticle}
\endbibitem

\bibitem[\protect\citeauthoryear{Kru{\v{z}}kov}{1970}]{kruzkov}
\begin{barticle}
\bauthor{\bsnm{Kru{\v{z}}kov}, \binits{S.N.}}:
\batitle{First order quasilinear equations in several independent variables}.
\bjtitle{Mathematics of the USSR-Sbornik}
\bvolume{10}(\bissue{2}),
\bfpage{217}
(\byear{1970})
\end{barticle}
\endbibitem

\bibitem[\protect\citeauthoryear{Godlewski and Raviart}{1991}]{godlewski}
\begin{bbook}
\bauthor{\bsnm{Godlewski}, \binits{E.}},
\bauthor{\bsnm{Raviart}, \binits{P.A.}}:
\bbtitle{Hyperbolic Systems of Conservation Laws}.
\bsertitle{Math{\'e}matiques \& applications}.
\bpublisher{Ellipses},
\blocation{Paris}
(\byear{1991})
\end{bbook}
\endbibitem

\bibitem[\protect\citeauthoryear{Keimer and
  Pflug}{2022}]{keimer2022discontinuous}
\begin{botherref}
\oauthor{\bsnm{Keimer}, \binits{A.}},
\oauthor{\bsnm{Pflug}, \binits{L.}}:
Discontinuous nonlocal conservation laws and related discontinuous {ODEs} --
  {E}xistence, {U}niqueness, {S}tability and {R}egularity
(2022)
{\href{https://arxiv.org/abs/2110.10503}{{arXiv:2110.10503}}}
{[math.AP]}
\end{botherref}
\endbibitem

\bibitem[\protect\citeauthoryear{Coclite et~al.}{2022}]{coclite2022existence}
\begin{barticle}
\bauthor{\bsnm{Coclite}, \binits{G.M.}},
\bauthor{\bsnm{De~Nitti}, \binits{N.}},
\bauthor{\bsnm{Keimer}, \binits{A.}},
\bauthor{\bsnm{Pflug}, \binits{L.}}:
\batitle{On existence and uniqueness of weak solutions to nonlocal conservation
  laws with bv kernels}.
\bjtitle{Zeitschrift f{\"u}r angewandte Mathematik und Physik}
\bvolume{73}(\bissue{6}),
\bfpage{241}
(\byear{2022})
\end{barticle}
\endbibitem

\bibitem[\protect\citeauthoryear{Blandin and Goatin}{2016}]{blandin2016well}
\begin{barticle}
\bauthor{\bsnm{Blandin}, \binits{S.}},
\bauthor{\bsnm{Goatin}, \binits{P.}}:
\batitle{Well-posedness of a conservation law with non-local flux arising in
  traffic flow modeling}.
\bjtitle{Numerische Mathematik}
\bvolume{132}(\bissue{2}),
\bfpage{217}--\blpage{241}
(\byear{2016})
\end{barticle}
\endbibitem

\bibitem[\protect\citeauthoryear{Simon}{1987}]{simon}
\begin{barticle}
\bauthor{\bsnm{Simon}, \binits{J.}}:
\batitle{Compact sets in the space {$L^p(0,T;B)$}}.
\bjtitle{Ann. Mat. Pura Appl. (4)}
\bvolume{146},
\bfpage{65}--\blpage{96}
(\byear{1987})
\doiurl{10.1007/BF01762360}
\end{barticle}
\endbibitem

\bibitem[\protect\citeauthoryear{Leoni}{2009}]{leoni}
\begin{bbook}
\bauthor{\bsnm{Leoni}, \binits{G.}}:
\bbtitle{A First Course in {S}obolev Spaces}.
\bsertitle{Graduate Studies in Mathematics},
vol. \bseriesno{105},
p. \bfpage{607}.
\bpublisher{American Mathematical Society},
\blocation{Providence, RI}
(\byear{2009})
\end{bbook}
\endbibitem

\bibitem[\protect\citeauthoryear{Chiarello and
  Keimer}{2023}]{chiarello2023singular}
\begin{botherref}
\oauthor{\bsnm{Chiarello}, \binits{F.A.}},
\oauthor{\bsnm{Keimer}, \binits{A.}}:
On the singular limit problem in nonlocal balance laws: Applications to
  nonlocal lane-changing traffic flow models
(2023)
\end{botherref}
\endbibitem

\bibitem[\protect\citeauthoryear{Keimer et~al.}{2018a}]{spinola}
\begin{barticle}
\bauthor{\bsnm{Keimer}, \binits{A.}},
\bauthor{\bsnm{Pflug}, \binits{L.}},
\bauthor{\bsnm{Spinola}, \binits{M.}}:
\batitle{Existence, uniqueness and regularity of multi-dimensional nonlocal
  balance laws with damping}.
\bjtitle{Journal of Mathematical Analysis and Applications}
\bvolume{466}(\bissue{1}),
\bfpage{18}--\blpage{55}
(\byear{2018})
\doiurl{10.1016/j.jmaa.2018.05.013}
\end{barticle}
\endbibitem

\bibitem[\protect\citeauthoryear{Keimer et~al.}{2018b}]{pflug3}
\begin{barticle}
\bauthor{\bsnm{Keimer}, \binits{A.}},
\bauthor{\bsnm{Pflug}, \binits{L.}},
\bauthor{\bsnm{Spinola}, \binits{M.}}:
\batitle{Nonlocal scalar conservation laws on bounded domains and applications
  in traffic flow}.
\bjtitle{SIAM SIMA}
\bvolume{50}(\bissue{6}),
\bfpage{6271}--\blpage{6306}
(\byear{2018})
\end{barticle}
\endbibitem

\bibitem[\protect\citeauthoryear{Colombo and Rossi}{2018}]{Colombo2018}
\begin{barticle}
\bauthor{\bsnm{Colombo}, \binits{R.M.}},
\bauthor{\bsnm{Rossi}, \binits{E.}}:
\batitle{Nonlocal conservation laws in bounded domains}.
\bjtitle{{SIAM} Journal on Mathematical Analysis}
\bvolume{50}(\bissue{4}),
\bfpage{4041}--\blpage{4065}
(\byear{2018})
\doiurl{10.1137/18m1171783}
\end{barticle}
\endbibitem

\bibitem[\protect\citeauthoryear{Goatin and Rossi}{2019}]{goatin2019well}
\begin{barticle}
\bauthor{\bsnm{Goatin}, \binits{P.}},
\bauthor{\bsnm{Rossi}, \binits{E.}}:
\batitle{Well-posedness of {IBVP} for {1D} scalar non-local conservation laws}.
\bjtitle{ZAMM-Journal of Applied Mathematics and Mechanics/Zeitschrift f{\"u}r
  Angewandte Mathematik und Mechanik}
\bvolume{99}(\bissue{11}),
\bfpage{201800318}
(\byear{2019})
\end{barticle}
\endbibitem

\bibitem[\protect\citeauthoryear{Bardos et~al.}{1979}]{Bardos1979}
\begin{barticle}
\bauthor{\bsnm{Bardos}, \binits{C.}},
\bauthor{\bsnm{Leroux}, \binits{A.Y.}},
\bauthor{\bsnm{Nedelec}, \binits{J.C.}}:
\batitle{First order quasilinear equations with boundary conditions}.
\bjtitle{Communications in Partial Differential Equations}
\bvolume{4}(\bissue{9}),
\bfpage{1017}--\blpage{1034}
(\byear{1979})
\doiurl{10.1080/03605307908820117}
\end{barticle}
\endbibitem

\end{thebibliography}

\end{document}